\newtheorem{thm}{Theorem}
\newtheorem{lemma}[thm]{Lemma}
\newtheorem{conjecture}[thm]{Conjecture}
\newtheorem{defn}[thm]{Definition}
\newtheorem{kov}[thm]{Corollary}
\newtheorem{clm}[thm]{Claim}
\theoremstyle{remark}
\newtheorem{rem}[thm]{Remark}
\newcommand\cB{{\mathcal B}}
\newcommand\cF{{\mathcal F}}
\newcommand\cH{{\mathcal H}}
\def\lc{\left\lceil}   
\def\rc{\right\rceil}
\def\lf{\left\lfloor}   
\def\rf{\right\rfloor}
\newcommand{\abs}[1]{\left\lvert{#1}\right\rvert}
\newcommand{\ignore}[1]{}
\title{Asymptotics for the Tur\'an number of Berge-$K_{2,t}$}
\begin{document}

\author{
D\'aniel Gerbner \thanks{Alfr\'ed R\'enyi Institute of Mathematics, Hungarian Academy of Sciences. e-mail: gerbner@renyi.hu}
\qquad
Abhishek Methuku \thanks{ Central European University, Budapest. e-mail: abhishekmethuku@gmail.com}
\qquad
M\'at\'e Vizer \thanks{Alfr\'ed R\'enyi Institute of Mathematics, Hungarian Academy of Sciences. e-mail: vizermate@gmail.com.}}

\date{
\today}

\maketitle

\begin{abstract}
Let $F$ be a graph. A hypergraph is called \textit{Berge-$F$} if it can be obtained by replacing each edge in $F$ by a hyperedge containing it. 

Let $\mathcal{F}$ be a family of graphs. The \textit{Tur\'an number} of the family Berge-$\mathcal{F}$ is the maximum possible number of edges in an $r$-uniform hypergraph on $n$ vertices containing no Berge-$F$ as a subhypergraph (for every $F \in \mathcal{F}$) and is denoted by $ex_r(n,\mathcal{F})$.
We determine the asymptotics for the Tur\'an number of Berge-$K_{2,t}$ by showing  
$$ex_3(n,K_{2,t})=(1+o(1))\frac{1}{6}(t-1)^{3/2} \cdot n^{3/2}$$ for any given $t \ge 7$. We study the analogous  question for  linear hypergraphs and show
$$ex_3(n,\{C_2, K_{2,t}\}) =  (1+o_{t}(1))\frac{1}{6}\sqrt{t-1} \cdot n^{3/2}.$$

We also prove general upper and lower bounds on the Tur\'an numbers of a class of graphs including $ex_r(n, K_{2,t})$, $ex_r(n,\{C_2, K_{2,t}\})$, and $ex_r(n, C_{2k})$ for $r \ge 3$. Our bounds improve the results of Gerbner and Palmer \cite{gp1}, F\"uredi and \"Ozkahya \cite{FO2017}, Timmons \cite{T2016}, and provide a new proof of a result of Jiang and Ma \cite{JM2016}.
\end{abstract}

\vspace{4mm}

\noindent
{\bf Keywords:} Tur\'an number, Berge hypergraph, bipartite graph  

\noindent
{\bf AMS Subj.\ Class.\ (2010)}: 05C35, 05D99

\section{Introduction}


Tur\'an-type extremal problems in graphs and hypergraphs are the central topic of extremal combinatorics and has a vast literature. For a survey of recent results we refer the reader to \cite{FS2013, K2011,MV2016}.

The classical definition of a hypergraph cycle due to Berge is the following. A \textit{Berge cycle of length $k$} denoted Berge-$C_k$ is an alternating sequence of distinct vertices and distinct hyperedges of the form $v_1$,$h_{1}$,$v_2$,$h_{2},\ldots,v_k$,$h_{k}$,$v_1$ where $v_i,v_{i+1} \in h_{i}$ for each $i \in \{1,2,\ldots,k-1\}$ and $v_k,v_1 \in h_{k}$. A Berge-path is defined similarly. Note that in this setting it makes sense to talk about $C_2$: a Berge-$C_2$ is a Berge copy of the multigraph consisting of two parallel edges, i.e., a hypergraph consiting of two hyperedges that share at least 2 vertices. An important Tur\'an-type extremal result for Berge cycles is due to Lazebnik and Verstra\"ete \cite{LV2003}, who studied the maximum number of hyperedges in an $r$-uniform hypergraph containing no Berge cycle of length less than five (i.e., girth five). Interestingly, they relate the question of estimating the maximum number of edges in a hypergraph of given girth with the famous question of estimating generalized Tur\'an numbers initiated by Brown, Erd\H{o}s and S\'os \cite{BES} and show that the two problems are equivalent in some cases. Since then Tur\'an-type  extremal problems for  hypergraphs in the Berge sense have attracted considerable attention: see e.g., \cite{BGY2008, Missing_case, FO2017, gp1, gp2, GYKL2017, GYL2012, gyl12, JM2016, T2016}.

\vspace{1mm}

Gerbner and Palmer \cite{gp1} gave the following natural generalization of the definitions of Berge cycles and Berge paths. Let $F=(V(F),E(F))$ be a graph and $\cB=(V(\cB),E(\cB))$ be a hypergraph. We say $\cB$ is \textit{Berge-F} if there is a bijection $\phi:E(F) \rightarrow E(\cB)$ such that $e \subseteq \phi(e)$ for all $e \in E(F)$. In other words, given a graph $F$ we can obtain a Berge-$F$ by replacing each edge of $F$ with a hyperedge that contains it. Given a family of graphs $\mathcal F$, we say that a hypergraph $\cH$ is \textit{Berge-$\mathcal F$-free} if for every $F \in \mathcal F$, the hypergraph $\cH$ does not contain a Berge-$F$ as a subhypergraph.
The maximum possible number of hyperedges in a Berge-$\mathcal F$-free hypergraph on $n$ vertices is the \emph{Tur\'an number} of Berge-$\mathcal F$. 

\vspace{1mm}

Only a handful of results are known about the asymptotic behaviour of Tur\'an numbers for hypergraphs. Our main goal in this paper is to determine sharp asymptotics for the Tur\'an number of Berge-$K_{2,t}$ and Berge-$\{C_2, K_{2,t}\}$ in $3$-uniform hypergraphs. In fact, we prove a general theorem which also provides bounds in the $r$-uniform case.

\vspace{1mm}

A topic that is closely related to Berge hypergraphs is \emph{expansions of graphs}. Let $F$ be a fixed graph and let $r \ge 3$ be a given integer. The \textit{$r$-uniform expansion of $F$} is the $r$-uniform hypergraph $F^+$ obtained from $F$ by adding $r-2$ new vertices to each edge of $F$ which are disjoint from $V(F)$ such that distinct vertices are added to distinct edges of $F$. This notion generalizes the notion of a loose cycle for example. The Tur\'an number of $F^+$ is the maximum number of edges in an $r$-uniform hypergraph on $n$ vertices that does not contain $F^+$ as a subhypergraph. In \cite{KMV1,KMV2,KMV3}, Kostochka, Mubayi and Verstra\"ete studied expansions of paths, cycles, trees, bipartite graphs and other graphs. Of particular interest to this paper is their result showing that the Tur\'an number of $K_{2,t}^+$ is asymptotically equal to $\binom{n}{2}$. Interestingly, as we will show in this paper, the asymptotic behavior of the Tur\'an number of Berge-$K_{2,t}$ is quite different.

\vspace{1mm}

Throughout the paper we consider \textit{simple} hypergraphs, which means there are no duplicate hyperedges and we use the term \textit{linear} for a hypergraph if any two different hyperedges contain at most one common vertex (observe that a hypergraph is linear if and only if it is Berge-$C_2$-free). Note that there is some ambiguity around these words in the theory of hypergraphs. Some authors use the word `simple' for hypergraphs that we call linear. For ease of notation sometimes we consider a hypergraph as a set of hyperedges. The \textit{degree} $d(v)$ of a vertex $v$ in a hypergraph is the number of hyperedges containing it.

\vspace{1mm}

A related Tur\'an-type question for graphs is to determine the maximum possible number of cliques of size $r$ in an $F$-free graph. To be able to connect the previously mentioned Tur\'an-type problems and state our results we introduce the following definitions. 

\vspace{1mm}

\begin{defn} For a set of simple graphs $\cF$ and $n \ge r \ge 2$, let
$$ex_r(n,\cF):=\max\{|\cH|: \cH\subset \binom{[n]}{r} \textnormal{ is Berge-$F$-free for all }F \in \cF\}.$$
\noindent
If $\cF=\{F\}$, then instead of $ex_r(n,\{F\})$, we simply write $ex_r(n,F)$.
\end{defn}

\begin{defn}
For $r \ge 2$ and a simple graph $G$ let us denote by $N(K_r, G)$ the number of (different) copies of $K_r$ in $G$. For a simple graph $F$ and $n \ge r \ge 2$, let
$$ex(n,K_r,F):= \max \{ N(K_r,G) : G \textrm{ is an F-free, simple graph on $n$ vertices} \}.$$ 

\noindent
If $r=2$ we simply use $ex(n,F)$ in both cases.  For a bipartite graph $F$, the bipartite Tur\'an number  $ex(m,n,F)$ is the maximum number of edges in  an $F$-free bipartite  graph  with $m$ and $n$ vertices in its color classes.
\end{defn}

\subsubsection*{Structure of the paper and notation} In Section \ref{history} we briefly survey the literature and highlight the results that we improve. In Section \ref{our_results} we state our results and prove some of our corollaries. In Section \ref{BergeF} we provide proofs of our theorems about $r$-uniform, Berge-$F$-free hypergraphs, while in Section \ref{linear} we provide proofs of our theorems about linear, $r$-uniform, Berge-$K_{2,t}$-free hypergraphs. Finally we provide some remarks and connections with other topics.

\vspace{1.5mm}

Throughout the paper we use standard order notions. When it is ambiguous, we write the parameter(s) that the constant depends on, as a subscript. 
\vspace{1.5mm}

In most cases we use $F$ to denote the forbidden graph, $G$ for the base graph and $\cH$ for the hypergraph. 

%
%

\section{History, known results}
\label{history}

One of the most important results concerning the Tur\'an number of complete bipartite graphs is due to K\H{o}v\'ari, S\'os and Tur\'an \cite{KST1954}, who showed that $ex(n, K_{s,t}) = O (n^{2-1/s})$, where $s \le t$. Koll\'ar, R\'onyai and Szab\'o \cite{KRSZ1996}  provided a lower bound matching the order of magnitude, when $t>s!$. (Later Alon, R\'onyai and Szab\'o \cite{ARS1999} provided a matching lower bound for $t>(s-1)!$.)

\vspace{1.5mm}

For $s=2$, F\"uredi proved the following nice result determining the asymptotics for the Tur\'an number of $K_{2,t}$.  

\begin{thm}[\cite{F1996}]
\label{K2tgraph}
For any fixed $t \ge 2$,  we have  $$ex(n, K_{2,t}) = \frac{\sqrt{t-1}}{2} \cdot n^{3/2} + O(n^{4/3})$$
\end{thm}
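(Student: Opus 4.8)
The plan is to prove matching upper and lower bounds; the order of magnitude $n^{3/2}$ is classical, so the whole content is the sharp constant $\tfrac{\sqrt{t-1}}{2}$, and essentially all of the work sits on the lower (construction) side. \emph{Upper bound.} Let $G$ be $K_{2,t}$-free on $n$ vertices with $e$ edges and average degree $\bar d=2e/n$. Counting cherries (two-edge paths) by their centre, and using that $K_{2,t}$-freeness means any two vertices have at most $t-1$ common neighbours,
\[
\sum_{v\in V(G)}\binom{d(v)}{2}\ \le\ (t-1)\binom{n}{2};
\]
convexity of $x\mapsto\binom{x}{2}$ bounds the left side below by $n\binom{\bar d}{2}$, and this rearranges to $\bar d^2-\bar d\le (t-1)(n-1)$, hence $\bar d\le\sqrt{(t-1)n}+O(1)$ and $e=\tfrac12 n\bar d\le\tfrac{\sqrt{t-1}}{2}n^{3/2}+O(n)$. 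This already beats the claimed error term, so the $O(n^{4/3})$ will come entirely from the construction.

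\emph{The construction.} For a prime power $q$ with $t-1\mid q-1$, let $H\le\mathbb{F}_q^{*}$ be the unique subgroup of order $t-1$, i.e.\ the set of $(t-1)$-st roots of unity in $\mathbb{F}_q$. I would use (a version of) F\"uredi's algebraic construction: the graph $G_q$ on vertex set $\mathbb{F}_q\times(\mathbb{F}_q^{*}/H)$, joining $(a,\bar b)$ to $(c,\bar d)$ exactly when $(a+c)/(bd)\in H$. Because $H$ is a subgroup this does not depend on the representatives $b,d$, and it is symmetric, so $G_q$ is a well-defined graph; it has $q(q-1)/(t-1)$ vertices, and for each of the $q-1$ values $c\neq -a$ there is exactly one coset $\bar d$ (namely $\overline{(a+c)/b}$) with $(a,\bar b)\sim(c,\bar d)$, so every vertex has degree $q-1$ (or $q-2$, once its loop is deleted; there are fewer than $q$ loops). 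Hence $e(G_q)=\tfrac{q^{3}}{2(t-1)}+O(q^{2})=\tfrac{\sqrt{t-1}}{2}\,|V(G_q)|^{3/2}+O(|V(G_q)|)$, the correct leading term.

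\emph{$K_{2,t}$-freeness.} Fix distinct vertices $(a_1,\bar b_1)$ and $(a_2,\bar b_2)$. A common neighbour $(c,\bar d)$ must have $(a_i+c)/(b_id)\in H$ for $i=1,2$; taking the ratio of these two conditions eliminates $d$ and is equivalent to $\big((a_1+c)b_2\big)^{t-1}=\big((a_2+c)b_1\big)^{t-1}$, and conversely each $c$ solving this equation yields exactly one coset $\bar d$ with $(c,\bar d)$ a common neighbour. Since $x\mapsto x^{t-1}$ is constant on each coset of $H$, the displayed identity is a polynomial equation in $c$ with leading term $(b_2^{t-1}-b_1^{t-1})c^{t-1}$: the polynomial has degree exactly $t-1$ when $\bar b_1\neq\bar b_2$, and degree $t-2$ when $\bar b_1=\bar b_2$ (in which case $a_1\neq a_2$, and one uses $\gcd(t-1,q)=1$). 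Hence there are at most $t-1$ admissible $c$, so any two vertices have at most $t-1$ common neighbours and $G_q$ is $K_{2,t}$-free.

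\emph{From $G_q$ to all $n$, and the main obstacle.} Given arbitrary $n$, I would pick a prime $q\equiv 1\pmod{t-1}$ just below $\sqrt{(t-1)n}$; by the classical estimates for primes in a fixed residue class in short intervals one can take $\sqrt{(t-1)n}-q=O(n^{1/3})$, so $n_0:=q(q-1)/(t-1)\le n$ with $n-n_0=O(n^{5/6})$. Adding $n-n_0$ isolated vertices to $G_q$ yields a $K_{2,t}$-free graph on $n$ vertices with at least $\tfrac{\sqrt{t-1}}{2}n_0^{3/2}-O(n)=\tfrac{\sqrt{t-1}}{2}n^{3/2}-O(n^{4/3})$ edges, which together with the upper bound completes the proof. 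I expect the crux to be the design of $G_q$ so that \emph{almost every} pair of vertices achieves the full quota of $t-1$ common neighbours — this is exactly what pins the constant to $\tfrac{\sqrt{t-1}}{2}$ rather than something larger — together with the number-theoretic input that lets one descend from the special sizes $n_0$ to all $n$, which is precisely what caps the error at $O(n^{4/3})$.
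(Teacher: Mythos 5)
The paper states this result as Theorem \ref{K2tgraph} by quoting F\"uredi \cite{F1996} without proof, so there is no in-paper argument to compare against; your proposal is in essence a correct reconstruction of F\"uredi's original proof (the K\H{o}v\'ari--S\'os--Tur\'an double count for the upper bound, the coset construction over $\mathbb{F}_q$ with the subgroup $H$ of order $t-1$ and the root-counting argument for $K_{2,t}$-freeness, and primes $\equiv 1 \pmod{t-1}$ in short intervals to pass to general $n$). The only nontrivial external input is the short-interval prime theorem for a fixed residue class, which is precisely what produces the $O(n^{4/3})$ error term in \cite{F1996} as well, so your argument is complete and consistent with the cited source.
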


\noindent
Moreover, he also determined the asymptotics for the balanced case of the bipartite Tur\'an number of  $K_{2,t}$.

\begin{thm}[\cite{F1996}]
\label{biparitieturanK2tgraph}
For any fixed $t \ge 2$,  we have
$$ex(n,n, K_{2,t}) = \sqrt{t-1} \cdot n^{3/2}  + O(n^{4/3}).$$
\end{thm}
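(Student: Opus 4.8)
The statement packages a matching upper and lower bound, so I would prove the two inequalities separately. For the upper bound the plan is the classical convexity/double-counting argument for $K_{2,t}$; for the lower bound the plan is to \emph{not} build a new construction but to recycle the non-bipartite one already furnished by Theorem~\ref{K2tgraph}, via a bipartite-double operation.

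\emph{Upper bound.} Let $G$ be a $K_{2,t}$-free bipartite graph with colour classes $A,B$, $|A|=|B|=n$. Since $t\ge 2$, any two vertices of $A$ with $t$ common neighbours in $B$ would, together with those neighbours, form a $K_{2,t}$; hence every pair $\{a,a'\}\subseteq A$ has at most $t-1$ common neighbours. Counting ``cherries'' with their midpoint in $B$,
\[
\sum_{b\in B}\binom{d(b)}{2}=\#\{(\{a,a'\},b)\;:\;a,a'\in N(b)\}\le (t-1)\binom{n}{2}.
\]
Applying convexity of $x\mapsto\binom{x}{2}$ to the $n$ degrees $d(b)$ (whose sum is $e:=|E(G)|$) bounds the left-hand side below by $n\binom{e/n}{2}$; solving the resulting quadratic inequality in $e$ gives $e\le\sqrt{t-1}\,n^{3/2}+O_t(n)$, comfortably inside the claimed error term.

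\emph{Lower bound.} By Theorem~\ref{K2tgraph} there is a (simple, hence loopless) $K_{2,t}$-free graph $G_0$ on $n$ vertices with at least $\tfrac{\sqrt{t-1}}{2}n^{3/2}-O(n^{4/3})$ edges. Form the bipartite double $G$ of $G_0$: let $A,B$ be two disjoint copies of $V(G_0)$ and join the $A$-copy of $x$ to the $B$-copy of $y$ exactly when $xy\in E(G_0)$. Then $|A|=|B|=n$ and $|E(G)|=2\,|E(G_0)|\ge\sqrt{t-1}\,n^{3/2}-O(n^{4/3})$. To see $G$ is $K_{2,t}$-free, note that in a copy of $K_{2,t}$ inside the bipartite graph $G$ the class of size $2$ lies wholly in $A$ or wholly in $B$; say $\{a,a'\}\subseteq A$ and $\{b_1,\dots,b_t\}\subseteq B$. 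Pulling back to $G_0$ yields distinct $x\neq x'$ and distinct $y_1,\dots,y_t$ with $xy_i,x'y_i\in E(G_0)$ for every $i$; since $G_0$ has no loops each $y_i\notin\{x,x'\}$, so these $t+2$ vertices are distinct and span a $K_{2,t}$ in $G_0$ — contradiction (the case $\{a,a'\}\subseteq B$ is symmetric). Combining the two bounds proves the theorem.

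\emph{Where the difficulty sits.} Given Theorem~\ref{K2tgraph}, the remaining work is essentially bookkeeping, the one point needing care being the use of the loopless hypothesis in the pullback. All the substance — realising the leading constant $\sqrt{t-1}$ for \emph{every} $t\ge 2$ (not merely for $t-1$ a prime power) and for \emph{every} $n$, with the $O(n^{4/3})$ slack forced by prime gaps — is already absorbed into Theorem~\ref{K2tgraph}; a self-contained account would have to reproduce that algebraic (finite-field) construction together with its number-theoretic approximation, and that is the genuinely hard part.
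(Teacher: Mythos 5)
The paper does not prove this statement at all --- it is quoted verbatim from F\"uredi \cite{F1996} as background --- so there is no internal proof to compare against; I can only judge your argument on its own terms, and it is correct. The upper bound is the standard K\H{o}v\'ari--S\'os--Tur\'an cherry count with Jensen, and the arithmetic ($d^2-d\le (t-1)n$ giving $e\le \sqrt{t-1}\,n^{3/2}+O(n)$) lands inside the stated error term. The lower bound via the bipartite double cover of the extremal graph of Theorem~\ref{K2tgraph} is the one step that genuinely needs checking, since the double cover of an $F$-free graph need not be $F$-free for a general bipartite $F$ (the double cover of a triangle is a $C_6$); your verification that a $K_{2,t}$ in the cover projects injectively --- distinctness within each part because $A,B$ are faithful copies of $V(G_0)$, and disjointness of the two classes because $G_0$ is loopless --- is exactly the point that makes the trick work here, and you identify it correctly. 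This route differs from F\"uredi's original treatment, which builds the bipartite extremal graph directly from the same finite-field construction rather than deducing it from the non-bipartite case; your reduction is shorter but, as you honestly note, it pushes all the substance (the algebraic construction achieving the constant $\sqrt{t-1}$ for all $t$, plus the prime-gap argument behind the $O(n^{4/3})$ term) into Theorem~\ref{K2tgraph}, which is itself only cited in this paper. As a derivation of Theorem~\ref{biparitieturanK2tgraph} from Theorem~\ref{K2tgraph} it is complete and sound.
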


\noindent
Now we list a couple of useful results that are needed later. Alon and Shikhelman determined the asymptotics of $ex(n,K_3, K_{2,t})$.

\begin{thm}[\cite{ALS2016}]\label{ask2t} For $t \ge 2$, we have $$ex(n, K_3, K_{2,t})=\frac{1}{6}(t-1)^{3/2}n^{3/2}(1 + o(1)).$$
\end{thm}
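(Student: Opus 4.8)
The proof splits into a lower bound and an upper bound. For the lower bound, I would take (a spanning subgraph of) the extremal $K_{2,t}$-free graph from F\"uredi's Theorem \ref{K2tgraph}, which has $\frac{\sqrt{t-1}}{2} n^{3/2}(1+o(1))$ edges and is (essentially) regular of degree $d \approx \sqrt{t-1}\cdot n^{1/2}$. The key structural feature to exploit is that in a $K_{2,t}$-free graph every pair of vertices has at most $t-1$ common neighbours; equivalently, the neighbourhood $N(v)$ of each vertex spans a graph in which every vertex has degree at most $t-2$, so $N(v)$ contains at least roughly $\binom{d}{2}\cdot\frac{?}{?}$ edges only if we instead argue the other direction. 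The clean way: count triangles by counting, for each vertex $v$, edges inside $N(v)$. Since $|N(v)| \approx d$ and $G[N(v)]$ has maximum degree $\le t-2$, this gives an upper bound $e(G[N(v)]) \le \frac{(t-2)d}{2}$, which is the wrong direction for a lower bound. So instead, for the lower bound I would use an explicit algebraic/polarity-type construction (as in F\"uredi's work) in which the number of triangles can be computed directly, or cite that such constructions realize $\frac{1}{6}(t-1)^{3/2}n^{3/2}(1+o(1))$ triangles; the point is that one needs a construction that is simultaneously $K_{2,t}$-free and triangle-rich, and the generalized polarity graphs do this.

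For the upper bound — which is the main content — let $G$ be $K_{2,t}$-free on $n$ vertices with degree sequence $d_1,\dots,d_n$. I would count triangles through each vertex: $3\,N(K_3,G) = \sum_v e(G[N(v)])$. The crucial observation is that for a fixed vertex $v$, each edge $xy$ inside $N(v)$ corresponds to a triangle $vxy$, and the pair $\{x,y\}$ lies in at most $t-1$ common neighbours; hence each edge of $G$ lies in at most $t-1$ triangles, giving $3\,N(K_3,G) \le (t-1)\,e(G)$. Combining with F\"uredi's bound $e(G)\le \frac{\sqrt{t-1}}{2}n^{3/2}(1+o(1))$ yields only $N(K_3,G)\le \frac{(t-1)^{3/2}}{6}n^{3/2}(1+o(1))$ — wait, that actually matches! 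Let me recheck: $\frac{(t-1)\cdot e(G)}{3} \le \frac{(t-1)}{3}\cdot\frac{\sqrt{t-1}}{2}n^{3/2} = \frac{(t-1)^{3/2}}{6}n^{3/2}$. Indeed this is exactly the target. So the upper bound follows by the "each edge is in $\le t-1$ triangles" double-counting together with F\"uredi's edge bound.

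The step I expect to be the real obstacle is the lower bound: one must produce, for infinitely many $n$, a $K_{2,t}$-free graph with asymptotically $\frac{1}{6}(t-1)^{3/2}n^{3/2}$ triangles, and then handle general $n$ by a standard interpolation/blow-up argument. The natural candidates are the Erd\H{o}s–R\'enyi polarity graphs and their generalizations used by F\"uredi; there the number of common neighbours of a pair is controlled ($\le t-1$) and nearly tight for many pairs, which is what forces the triangle count up to the matching constant. Verifying that these constructions achieve the triangle count requires either a direct eigenvalue/quasirandomness computation (the number of triangles in a $d$-regular graph on $n$ vertices with second eigenvalue $\lambda$ is $\frac{nd^3}{6n} + O(\lambda^3 n)$-type estimates) or an explicit combinatorial count in the algebraic model; this is where the bulk of the careful work lies, and where one must be sure the error terms are genuinely $o(n^{3/2})$.
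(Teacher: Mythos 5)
This theorem is not proved in the paper at all: it is quoted from Alon and Shikhelman \cite{ALS2016}, so the comparison is with their argument. Your upper bound is correct, complete, and is in fact the same argument as in \cite{ALS2016}: in a $K_{2,t}$-free graph two adjacent vertices have at most $t-1$ common neighbours, so every edge lies in at most $t-1$ triangles, whence $3N(K_3,G)\le (t-1)e(G)\le (t-1)\cdot\frac{\sqrt{t-1}}{2}n^{3/2}(1+o(1))$ by Theorem \ref{K2tgraph}, which is exactly the claimed bound.

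The genuine gap is the lower bound, which you leave as a pointer to ``polarity-type constructions'' plus either a citation or an unspecified computation, and the spectral fallback you propose would actually fail at this density. F\"uredi's graph has $d\approx\sqrt{(t-1)n}$, and any $d$-regular graph satisfies $\lambda=\Omega(\sqrt{d})=\Omega(n^{1/4})$ for its second largest eigenvalue in absolute value (since $\sum_{i\ge2}\mu_i^2=nd-d^2$); the trace identity $6N(K_3,G)=d^3+\sum_{i\ge2}\mu_i^3$ has error bounded only by $\min\{\lambda nd,\,n\lambda^3\}=\Omega(n^{7/4})$, which swamps the main term $d^3/6=\Theta(n^{3/2})$. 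So quasirandomness alone cannot certify the triangle count of a graph with $\Theta(n^{3/2})$ edges; one must use the algebraic structure. What is actually needed (and what \cite{ALS2016} verifies, and what this paper also invokes in Section \ref{linear} as a well-known property of F\"uredi's construction \cite{F1996}) is the codegree statement that all but $o(n^{3/2})$ edges of that construction lie in exactly $t-1$ triangles and the remaining edges in $t-2$; this immediately gives $N(K_3,G)\ge\frac{1}{3}(t-1)e(G)(1+o(1))=\frac{1}{6}(t-1)^{3/2}n^{3/2}(1+o(1))$, and one then passes from the admissible values of $n$ (coming from suitable prime powers) to all $n$ by density of primes, as you indicate. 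Without proving that codegree property, your argument establishes only the upper bound.
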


\noindent
Recently Luo determined $ex(n,K_r,P_k)$ exactly.

\begin{thm}[\cite{L2017}] \label{Luo} For $n \ge k\ge 2$ and $r\ge 1$ we have $$ex(n, K_r, P_k)=\frac{n}{k-1}\binom{k-1}{r}.$$

\end{thm}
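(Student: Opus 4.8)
\emph{Proof proposal.} The lower bound comes from the natural construction: take $\lfloor n/(k-1)\rfloor$ vertex-disjoint copies of $K_{k-1}$ together with one copy of $K_j$ on the remaining $j:=n-(k-1)\lfloor n/(k-1)\rfloor$ vertices. Every component has fewer than $k$ vertices, so the graph is $P_k$-free, and it contains $\lfloor n/(k-1)\rfloor\binom{k-1}{r}+\binom{j}{r}$ copies of $K_r$; when $(k-1)\mid n$ this equals $\frac{n}{k-1}\binom{k-1}{r}$ exactly, and in general it matches the claimed value up to the lower-order contribution of the last block. So the content is the upper bound $ex(n,K_r,P_k)\le\frac{n}{k-1}\binom{k-1}{r}$, which I would prove by induction on $n$. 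For $n\le k-1$ one has $N(K_r,G)\le\binom nr=\frac nr\binom{n-1}{r-1}\le\frac nr\binom{k-2}{r-1}=\frac n{k-1}\binom{k-1}{r}$, so assume $n\ge k$. Since the bound is additive over connected components ($\sum_i\frac{n_i}{k-1}\binom{k-1}{r}=\frac n{k-1}\binom{k-1}{r}$ when $\sum_i n_i=n$), we may also assume $G$ is connected.

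The inductive step I would reduce to the following structural claim: \emph{a connected $P_k$-free graph $G$ on $n\ge k$ vertices contains a vertex set $S$ with $1\le|S|\le k-1$ such that the number of copies of $K_r$ meeting $S$ is at most $\frac{|S|}{k-1}\binom{k-1}{r}$.} Granting this, $G-S$ is $P_k$-free on $n-|S|<n$ vertices, so by induction $N(K_r,G-S)\le\frac{n-|S|}{k-1}\binom{k-1}{r}$, and adding the at most $\frac{|S|}{k-1}\binom{k-1}{r}$ copies of $K_r$ that meet $S$ gives
\[
N(K_r,G)\le\frac{n-|S|}{k-1}\binom{k-1}{r}+\frac{|S|}{k-1}\binom{k-1}{r}=\frac{n}{k-1}\binom{k-1}{r},
\]
which closes the induction. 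The disjoint-$K_{k-1}$ extremal example shows what $S$ ought to be — one of the cliques, for which the count of incident $K_r$'s is exactly $\binom{k-1}{r}=\frac{|S|}{k-1}\binom{k-1}{r}$.

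To produce $S$ I would work with a longest path $P=v_1v_2\cdots v_\ell$ of $G$. Since $G$ is $P_k$-free we have $\ell\le k-1$, and by maximality every neighbour of $v_1$ and of $v_\ell$ lies on $P$; if moreover no edge left $V(P)$ at all then connectedness would force $n=\ell\le k-1$, contradicting $n\ge k$, so some internal vertex of $P$ has a neighbour outside $P$. The plan is then to run the P\'osa rotation argument at the $v_1$-end (equivalently, a DFS- or Kopylov-style ``disintegration''): this yields a set of rotation-reachable endpoints, each of whose entire neighbourhood sits inside $V(P)$, and from this structure one carves out a set $S\subseteq V(P)$ — so automatically $|S|\le\ell\le k-1$ — that is \emph{clique-closed}, in the sense that every $K_r$ of $G$ meeting $S$ is in fact contained in a fixed set of at most $k-1$ vertices; the number of such $K_r$'s is then at most $\binom{k-1}{r}$, and the finer $|S|$-dependent bookkeeping upgrades this to $\frac{|S|}{k-1}\binom{k-1}{r}$. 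The hard part is precisely this last point. The obvious candidates for $S$ all fail: a single vertex $v$ can lie in $\binom{k-2}{r-1}=r\cdot\frac1{k-1}\binom{k-1}{r}$ copies of $K_r$, overshooting the target by a factor of about $r$; the closed neighbourhood of a path-endpoint, or the subtree of a deepest DFS leaf, can let $K_r$'s ``leak'' to vertices outside $S$ and is again uncontrolled. So the rotation/disintegration structure is what is really needed, to isolate a genuinely clique-bounded chunk achieving the sharp constant. (An alternative route deduces the result from Luo's companion theorem on graphs of bounded circumference, using that a $P_k$-free graph has circumference at most $k-1$; but for the path version the direct argument is cleaner since the disjoint-clique construction is the unique extremal configuration.)
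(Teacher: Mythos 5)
First, note that the paper does not prove this statement at all: it is quoted verbatim from Luo \cite{L2017} as a known tool, so there is no in-paper proof to compare against. Your proposal therefore has to stand on its own, and as written it does not: it contains a genuine gap at exactly the point you yourself flag as ``the hard part.'' The lower-bound construction, the base case $n\le k-1$, the reduction to connected graphs, and the inductive bookkeeping are all fine \emph{conditional on} the structural claim that every connected $P_k$-free graph on $n\ge k$ vertices contains a set $S$ with $1\le|S|\le k-1$ such that at most $\frac{|S|}{k-1}\binom{k-1}{r}$ copies of $K_r$ meet $S$. But that claim is the entire content of the theorem, and you do not prove it. You correctly observe that the naive choices of $S$ (a single vertex, the closed neighbourhood of a path endpoint, a DFS subtree) all fail, and then gesture at P\'osa rotation / Kopylov disintegration without extracting from it either the ``clique-closed'' set or the finer $|S|$-dependent count $\frac{|S|}{k-1}\binom{k-1}{r}$ (as opposed to the weaker $\binom{k-1}{r}$, which does not close the induction when $|S|<k-1$). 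Since no candidate $S$ is exhibited and no argument is given that one exists, the induction never gets off the ground.

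To be clear about why this is not a cosmetic omission: in Luo's actual argument the sharp constant is obtained by first passing to the circumference version (a graph with no cycle of length at least $k$) and running an induction that splits on whether there is a vertex of small degree, a cut vertex, or a $2$-connected block to which Kopylov-type disintegration applies; the clique-counting in the disintegration step is where the factor $\frac{1}{k-1}\binom{k-1}{r}$ per deleted vertex is actually earned. Your sketch correctly identifies the shape of the proof and the right extremal example, but the lemma you would need is asserted, not established, so the proposal is an outline rather than a proof.
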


\subsection{Tur\'an-type results for Berge-F-free hypergraphs}

In this section we briefly survey results concerning Berge-$F$-free hypergraphs, focusing mainly on the results that we improve in our article.

\vspace{1mm}

One of the first results concerning Tur\'an numbers of Berge cycles is due to Lazebnik and Verstra\"ete \cite{LV2003} who showed that $ex_3(n, \{C_2,C_3,C_4\}) = n^{3/2}/6 + o(n^{3/2}).$ Very recently this was strengthened by Ergemlidze, Gy\H{o}ri, and Methuku \cite{BEM} showing that $ex_3(n, \{C_2,C_3,C_4\}) \sim ex_3(n, \{C_2,C_4\}).$
Bollob\'as and Gy\H{o}ri \cite{BGY2008} estimated the Tur\'an number of Berge-$C_5$ by showing $n^{3/2}/{3\sqrt{3}} \le ex_3(n, C_5) \le \sqrt{2}n^{3/2} + 4.5n$.  Very recently, this estimate was improved by Ergemlidze, Gy\H{o}ri, Methuku \cite{C_5}. In \cite{BEM} they also considered the analogous question for linear hypergraphs and proved that $ex_3(n, \{C_2, C_5\}) = n^{3/2}/{3\sqrt{3}}+o(n^{3/2})$. Surprisingly, even though the lower bound here is the same as the lower bound in the Bollob\'as-Gy\H{o}ri theorem, the hypergraph they construct in order to establish their lower bound is very different from the hypergraph used in the Bollob\'as-Gy\H{o}ri theorem. The latter is far from being linear.
\vspace{1mm}

Gy\H{o}ri and Lemons \cite{GYL2012} generalized the Bollob\'as-Gy\H{o}ri theorem to Berge cycles of any given odd length and proved that $ex_3(n, C_{2k+1}) \le 4k^4n^{1+1/k}+15k^4n+10k^2n$ for all $n, k \ge 1$.
Note that this upper bound has the same order of magnitude as the upper bound on the maximum possible number of edges in a $C_{2k}$-free graph (see the Even Cycle Theorem of Bondy and Simonovits \cite{BS1974}). This shows the surprising fact that the maximum number of hyperedges in a Berge-$C_{2k+1}$-free hypergraph is significantly different from the maximum possible number of edges in a $C_{2k+1}$-free graph. Recently, F\"uredi and \"Ozkahya \cite{FO2017} improved this result by showing $ex_3(n,C_{2k+1}) \le (9k^2 + 10k + 5)n^{1+1/k} + O(k^2n)$.


\vspace{2mm}

For Berge cycles of even length, F\"uredi and \"Ozkahya proved the following bound.

\begin{thm}[\cite{FO2017}]\label{foc2k} For $k \ge 2$, we have
$$ex_3(n,C_{2k})\le \frac{2k}{3}ex(n,C_{2k}),$$
and
$$ex(n,K_3,C_{2k})\le \frac{2k-3}{3}ex(n,C_{2k}).$$
\end{thm}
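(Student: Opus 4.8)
The plan is to prove both inequalities by the same scheme: bound the number of copies — triangles for the second inequality, hyperedges for the first — by a fixed constant times the number of edges of an auxiliary $C_{2k}$-free graph on $[n]$, and then bound that number of edges by $ex(n,C_{2k})$. For the second inequality the auxiliary graph is just the host graph itself; for the first it has to be extracted from the hypergraph, which is where the real work lies.

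\medskip
\noindent\emph{The bound $ex(n,K_3,C_{2k})\le\tfrac{2k-3}{3}ex(n,C_{2k})$.} Let $G$ be a $C_{2k}$-free graph on $[n]$. I would start from the observation that for every vertex $v$ the graph $G[N(v)]$ induced on the neighbourhood of $v$ contains no path on $2k-1$ vertices: such a path $w_1w_2\cdots w_{2k-1}$, together with $v$ (which is adjacent to each $w_i$), would close into the cycle $v\,w_1\,w_2\cdots w_{2k-1}\,v$, a copy of $C_{2k}$ in $G$. Hence by the $r=2$ case of Theorem~\ref{Luo} — the Erd\H{o}s--Gallai bound $ex(s,P_{2k-1})=\tfrac{2k-3}{2}s$ — and trivially also when $d(v)<2k-1$, we get $e(G[N(v)])\le\tfrac{2k-3}{2}\,d(v)$. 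Counting each triangle of $G$ once at each of its three vertices,
\[
N(K_3,G)=\frac13\sum_{v}e(G[N(v)])\le\frac{2k-3}{6}\sum_v d(v)=\frac{2k-3}{3}\,e(G)\le\frac{2k-3}{3}\,ex(n,C_{2k}),
\]
which is already sharp for $G=K_{2k-1}$. I expect no difficulty here.

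\medskip
\noindent\emph{The bound $ex_3(n,C_{2k})\le\tfrac{2k}{3}ex(n,C_{2k})$.} Let $\cH$ be a Berge-$C_{2k}$-free $3$-uniform hypergraph on $[n]$ with $m$ hyperedges. It suffices to construct a $C_{2k}$-free graph $G$ on $[n]$ with $e(G)\ge\tfrac{3m}{2k}$, since then $m\le\tfrac{2k}{3}e(G)\le\tfrac{2k}{3}ex(n,C_{2k})$. The construction rests on an elementary ``rainbow'' principle: if one picks, for every hyperedge $h$, one of its three pairs $\rho(h)\subseteq h$, and the chosen pairs are pairwise distinct, then $G:=\{\rho(h):h\in\cH\}$ is automatically $C_{2k}$-free, because a $C_{2k}$ in $G$ would be a cycle $v_1v_2\cdots v_{2k}v_1$ with $\{v_i,v_{i+1}\}=\rho(h_i)\subseteq h_i$ and the $h_i$ pairwise distinct, i.e.\ a Berge-$C_{2k}$ in $\cH$. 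So I would pick one pair per hyperedge as injectively as possible: applying the defect form of Hall's theorem to the bipartite ``containment'' graph between $\cH$ and $\binom{[n]}{2}$ (all left-degrees equal to $3$), one matches all but $\mathrm{def}(\cH)$ of the hyperedges to pairwise distinct pairs, where $\mathrm{def}(\cH)=\max_{\cH'\subseteq\cH}\bigl(|\cH'|-|\partial\cH'|\bigr)$ and $\partial\cH'$ denotes the set of pairs covered by the hyperedges of $\cH'$. Deleting the unmatched hyperedges leaves a $C_{2k}$-free graph $G$ with $e(G)=m-\mathrm{def}(\cH)$, so it only remains to show $\mathrm{def}(\cH)\le\tfrac{2k-3}{2k}m$.

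\medskip
The hard part is exactly this deficiency bound, and it is the one place Berge-$C_{2k}$-freeness must be invoked. If $\cH'\subseteq\cH$ has $|\cH'|$ substantially larger than $|\partial\cH'|$, then its hyperedges must be concentrated on few pairs, hence on few vertices; but a $3$-uniform hypergraph on at most $2k-1$ vertices contains no Berge-$C_{2k}$ whatsoever (it has too few vertices), so Berge-$C_{2k}$-freeness caps the size of any such concentration, and on a vertex set of size at most $2k-1$ the ratio (number of hyperedges)$\,/\,$(number of covered pairs) is at most $\binom{2k-1}{3}/\binom{2k-1}{2}=\tfrac{2k-3}{3}<\tfrac{2k}{3}$. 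Turning this into a rigorous bound $\mathrm{def}(\cH)\le\tfrac{2k-3}{2k}m$ — pinning down the deficiency-maximizing $\cH'$, showing it decomposes into small dense clusters, and tracking the constant so that precisely $\tfrac{2k}{3}$ (rather than a weaker factor) comes out — is the crux; granting it, $e(G)=m-\mathrm{def}(\cH)\ge m-\tfrac{2k-3}{2k}m=\tfrac{3m}{2k}$, which finishes the argument. Everything else — the rainbow principle, the appeal to defect Hall, and the entire triangle bound — is routine.
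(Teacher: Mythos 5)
Your proof of the second inequality is correct and is the standard argument (neighbourhoods are $P_{2k-1}$-free, Erd\H{o}s--Gallai, count each triangle three times); note that this theorem is only quoted in the paper from \cite{FO2017}, so there is no proof here to compare against, but your argument for that part is exactly what one expects.

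The first inequality, however, has a genuine gap, and you have flagged it yourself: everything hinges on the deficiency bound $\mathrm{def}(\cH)\le\frac{2k-3}{2k}m$, and that is precisely the step you do not prove. The heuristic you offer does not survive scrutiny: a subfamily $\cH'$ with $|\cH'|-|\partial\cH'|$ large need not be concentrated on at most $2k-1$ vertices (it can be spread over many ``dense spots''), and the assertion that the deficiency-maximizing family ``decomposes into small dense clusters'' each living on at most $2k-1$ vertices, with shadows that can be counted disjointly, is exactly the hard structural content --- Berge-$C_{2k}$-freeness must be used to rule out large or overlapping dense pieces, and nothing in your sketch does this. Even the per-cluster ratio bound needs Kruskal--Katona rather than the naive comparison $\binom{2k-1}{3}/\binom{2k-1}{2}$, since a cluster need not be a complete triple system. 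So as written the argument establishes $m\le\frac{2k}{3}\,ex(n,C_{2k})$ only ``granting'' an unproved (and not obviously true) lemma. Moreover, the detour through defect Hall is unnecessary: as the paper itself points out right after Corollary \ref{even_cycle}, the first inequality follows immediately from the second together with Theorem \ref{gp2}. Concretely, the greedy pair-selection in the proof of Theorem \ref{gp2} produces from a Berge-$C_{2k}$-free $\cH$ a $C_{2k}$-free graph $G$ with $|\cH|\le e(G)+N(K_3,G)$; hyperedges that fail to receive a fresh pair are not discarded (as in your matching approach) but charged to triangles of $G$, and your own triangle bound then gives $|\cH|\le ex(n,C_{2k})+\frac{2k-3}{3}ex(n,C_{2k})=\frac{2k}{3}ex(n,C_{2k})$. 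Replacing your Hall/deficiency step by this charging argument both closes the gap and shortens the proof.
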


\noindent
We improve the first inequality in Corollary \ref{even_cycle} by showing $\frac{2k}{3}$ can be replaced by $\frac{2k-3}{3}$ provided $k \ge 5$.

\vspace{4mm}

\noindent
Gy\H{o}ri and Lemons \cite{GyLemonsruniform} also showed that for general $r$-uniform hypergraphs with $r \ge 4$, $ex_r(n,C_{2k+1}) \le O(k^{r-2}) \cdot ex_3(n,C_{2k+1})$ and $ex_r(n,C_{2k}) \le O(k^{r-1}) \cdot ex(n,C_{2k})$. Jiang and Ma \cite{JM2016} improved these results by an $\Omega(k)$ factor. In particular, for the even cycle case they showed the following.
\begin{thm}[\cite{JM2016}]\label{jiangma} If $n,k \ge 1$ and $r \ge 4$, then we have
$$ex_r(n,C_{2k}) \le O_r(k^{r-2}) \cdot ex(n,C_{2k}).$$
\end{thm}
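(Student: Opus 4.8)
The plan is to prove, for every family $\cF$ of graphs, the clean bound
\[
ex_r(n,\cF)\ \le\ \Bigl(\binom r2-1\Bigr)ex(n,\cF)+ex(n,K_r,\cF),
\]
and then to specialise it to $\cF=\{C_{2k}\}$, together with the estimate $ex(n,K_r,C_{2k})=O(k^{r-2})\,ex(n,C_{2k})$. The latter I would deduce from Luo's theorem (Theorem~\ref{Luo}), exactly as Füredi and Özkahya deduced their $r=3$ inequality (Theorem~\ref{foc2k}). Since $\binom r2-1$ is a constant depending only on $r$, the two ingredients give $ex_r(n,C_{2k})=O(k^{r-2})\,ex(n,C_{2k})$.

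So let $\cH\subseteq\binom{[n]}r$ be Berge-$C_{2k}$-free. For a pair $e\in\binom{[n]}2$ let $d(e)$ be the number of hyperedges of $\cH$ containing $e$, set $D=\binom r2$, call $e$ \emph{heavy} if $d(e)\ge D$, and let $P$ be the graph of heavy pairs on $[n]$. Partition $\cH=\cH_1\cup\cH_2$ according to whether a hyperedge contains a non-heavy pair ($\cH_1$) or all $\binom r2$ of its pairs are heavy ($\cH_2$). For $\cH_1$, choose for each $h\in\cH_1$ a non-heavy pair $\phi(h)\subseteq h$ and put $G_1=\{\phi(h):h\in\cH_1\}$. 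Since $\phi$ is a function its fibres are pairwise disjoint, so a $C_{2k}$ in $G_1$ would lift to a Berge-$C_{2k}$ in $\cH$: pick any hyperedge from the fibre of each of the $2k$ cycle edges, and these are automatically distinct. Hence $G_1$ is $C_{2k}$-free, and since $|\phi^{-1}(e)|\le d(e)\le D-1$ for every $e\in G_1$ we get $|\cH_1|=\sum_{e\in G_1}|\phi^{-1}(e)|\le (D-1)\,e(G_1)\le(D-1)\,ex(n,C_{2k})$.

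For $\cH_2$, the map $h\mapsto V(h)$ is injective (as $\cH$ is simple) and sends each $h\in\cH_2$ to a copy of $K_r$ in $P$, so $|\cH_2|\le N(K_r,P)$. I claim $P$ is $C_{2k}$-free. Given a $2k$-cycle in $P$, consider the bipartite "containment" graph between its $2k$ edges and $\cH$; a system of distinct representatives for the edges is precisely a Berge-$C_{2k}$ in $\cH$, so it suffices to verify Hall's condition. But any $j$ of the cycle edges are heavy, hence together have at least $jD$ incidences with $\cH$, while a single hyperedge, being a $K_r$, can contain at most $\binom r2=D$ of them; thus they meet at least $j$ hyperedges, as required. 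Therefore $|\cH_2|\le N(K_r,P)\le ex(n,K_r,C_{2k})$. Finally, for any $C_{2k}$-free graph $G$ the subgraph $G[N(v)]$ has no path on $2k-1$ vertices (such a path together with the two edges from $v$ to its endpoints would close a $C_{2k}$), so Theorem~\ref{Luo} bounds $N(K_{r-1},G[N(v)])$ by $\frac{d(v)}{2k-2}\binom{2k-2}{r-1}$; summing over $v$ and dividing by $r$ gives $N(K_r,G)\le\frac{2}{r(2k-2)}\binom{2k-2}{r-1}e(G)=O(k^{r-2})\,ex(n,C_{2k})$. Adding the bounds for $\cH_1$ and $\cH_2$ finishes the proof.

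The main obstacle is the passage from \emph{Berge}-$C_{2k}$-freeness to ordinary $C_{2k}$-freeness of the auxiliary graphs. For $G_1$ this is automatic from disjointness of fibres, but for the heavy-pair graph $P$ it rests on the Hall argument above, and the reason the threshold must be taken to be exactly $D=\binom r2$ is that a single hyperedge can cover that many edges of a prospective cycle at once. The other delicate point is getting the exponent right in the $K_r$-count: it is Luo's exact value of $ex(m,K_{r-1},P_{2k-1})$, rather than a cruder estimate, that yields the factor $\binom{2k-2}{r-1}/(2k-2)=O(k^{r-2})$ instead of something larger.
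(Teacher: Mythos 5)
Your proof is correct, but it takes a genuinely different route from the paper. The paper obtains this statement as Corollary \ref{jiangma2} of the general Theorem \ref{main}, whose proof runs through the red/blue edge classification, Hall's theorem applied to \emph{bad} sets, and an iterated deletion-and-recolouring scheme (green/purple) followed by the clique-counting Claim \ref{based_on_x}; you instead prove the cruder general inequality $ex_r(n,F)\le\left(\binom{r}{2}-1\right)ex(n,F)+ex(n,K_r,F)$ by splitting the hyperedges according to whether they contain a pair of multiplicity below $\binom{r}{2}$. Your two ingredients are sound: the fibre argument for the light-pair hyperedges, and the Hall-type verification that the heavy-pair graph is $C_{2k}$-free (threshold $\binom{r}{2}$ works because a single hyperedge covers at most $\binom{r}{2}$ cycle edges, whereas the paper, working below the threshold $\binom{r}{2}-1$, must exploit $K_r$-freeness of $F$ and then repair the graph by deleting edges). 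Your clique-counting step — neighbourhoods in a $C_{2k}$-free graph are $P_{2k-1}$-free, then Luo's Theorem \ref{Luo} — is literally the computation in Corollary \ref{jiangma2} and gives the same main coefficient $\frac{2}{r(2k-2)}\binom{2k-2}{r-1}=O_r(k^{r-2})$. Two remarks. First, your general lemma is actually weaker than Theorem \ref{gp2}, which is stated and proved in the paper and gives $ex_r(n,F)\le ex(n,K_r,F)+ex(n,F)$; citing it would have replaced your whole heavy/light analysis and improved your additive term from $\left(\binom{r}{2}-1\right)ex(n,C_{2k})$ to $ex(n,C_{2k})$. Second, what the heavier machinery of Theorem \ref{main} buys is precisely the elimination of any additive $ex(n,F)$-type loss; that is irrelevant for the present $O_r(k^{r-2})$ bound, where your constant-factor loss is harmless, but it is essential for the sharp asymptotics in Corollaries \ref{maincor} and \ref{even_cycle}, which your inequality could not deliver. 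Finally, like the paper's own new proof (Corollary \ref{jiangma2} assumes $k\ge 2$), your argument needs $k\ge 2$ for Luo's theorem to apply to $P_{2k-1}$; the case $k=1$ of the stated theorem is the trivial linearity bound, so nothing is lost.
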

\noindent
We give a new proof of the above result in Corollary \ref{jiangma2} (with a better constant factor).

\vspace{4mm}

\noindent
Gerbner and Palmer proved the following about $r$-uniform Berge $K_{2,t}$-free hypergraphs.

\begin{thm}[\cite{gp1}]\label{gp1} If $t\le r-2$, then $$ex_r(n,K_{2,t})=O(n^{3/2}),$$ 
and if $t= r-2$, then $$ex_r(n,K_{2,t})=\Theta(n^{3/2}).$$
\end{thm}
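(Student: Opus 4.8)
The plan is to split the argument into an upper bound valid for all $t\le r-2$ (which also gives the upper half of the $t=r-2$ statement) and a matching construction for $t=r-2$.

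\emph{Upper bound.} Given a Berge-$K_{2,t}$-free $r$-uniform hypergraph $\cH$ on $[n]$ with $t\le r-2$, I would encode each hyperedge as a pair of its vertices so as to produce a $K_{2,t}$-free graph $G$ on $[n]$ with exactly $|\cH|$ edges, greedily: order the hyperedges arbitrarily and, on processing $e$, add to $G$ some pair $\{x,y\}\subseteq e$ that is not yet an edge of $G$. Two points make this succeed. First, at every stage $G$ is the image of an injection from the processed hyperedges to pairs, so a copy of $K_{2,t}$ in $G$ with parts $\{a,b\}$ and $\{c_1,\dots,c_t\}$ would read off, edge by edge, $2t$ distinct hyperedges $e_i\supseteq\{a,c_i\}$ and $f_i\supseteq\{b,c_i\}$ (distinct because the $2t$ pairs $\{a,c_i\},\{b,c_i\}$ are distinct edges of $G$), i.e.\ a Berge-$K_{2,t}$ in $\cH$; hence $G$ stays $K_{2,t}$-free. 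Second, since $t\le r-2$ we have $K_r\supseteq K_{2,r-2}\supseteq K_{2,t}$, so a $K_{2,t}$-free graph contains no $K_r$; in particular the $\binom r2$ pairs of the hyperedge currently being processed are never all present in $G$, so a legal move always exists and every hyperedge gets encoded. Thus $|E(G)|=|\cH|$ and, $G$ being $K_{2,t}$-free, $|\cH|\le ex(n,K_{2,t})=\tfrac{\sqrt{t-1}}2\,n^{3/2}(1+o(1))$ by Theorem~\ref{K2tgraph}, so $ex_r(n,K_{2,t})=O(n^{3/2})$. (The same encoding shows $ex_r(n,F)\le ex(n,F)+ex(n,K_r,F)$ for any graph $F$, the hyperedges one fails to encode being $K_r$'s of $G$; for $F=K_{2,t}$ with $t\le r-2$ the second term vanishes.)

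\emph{Lower bound for $t=r-2$} (assume $r\ge 4$, so $t\ge 2$). I would use a ``core $+$ petal'' hypergraph. Take $N=\lfloor n/r\rfloor$ pairwise disjoint cores $C_1,\dots,C_N$ of size $r-1$, a disjoint set $P$ of $N$ petals, and a $K_{2,r-2}$-free bipartite graph $I$ between $P$ and $\{C_1,\dots,C_N\}$ with $ex(N,N,K_{2,r-2})=\sqrt{r-3}\,N^{3/2}(1+o(1))$ edges (Theorem~\ref{biparitieturanK2tgraph}); set $\cH=\{\,C_j\cup\{p\}: pC_j\in E(I)\,\}$. This is a simple $r$-uniform hypergraph on $rN\le n$ vertices with $\Omega(n^{3/2})$ edges, and each hyperedge has a unique petal and, the cores being disjoint, a unique core. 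To verify that $\cH$ is Berge-$K_{2,r-2}$-free I would take a hypothetical Berge-$K_{2,r-2}$ with $2$-side $\{a,b\}$, $(r-2)$-side $\{c_1,\dots,c_{r-2}\}$ and distinct hyperedges $e_i\supseteq\{a,c_i\}$, $f_i\supseteq\{b,c_i\}$, and run through the cases according to whether $a$ and $b$ are petals or core vertices. Using uniqueness of petal/core and disjointness of cores, each case forces one of: several of the $e_i$ (or $f_i$) coincide, contradicting distinctness; or $a,b,c_1,\dots,c_{r-2}$ (that is, $r$ vertices) all lie in a single core of size $r-1$; or two petals, or two of the $C_j$, have $r-2$ common neighbours in $I$, i.e.\ $I\supseteq K_{2,r-2}$. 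All three are impossible, so $\cH$ is Berge-$K_{2,r-2}$-free and $ex_r(n,K_{2,r-2})=\Omega(n^{3/2})$.

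\emph{Where the difficulty lies.} The greedy encoding is routine once one notices that $K_{2,t}$-freeness with $t\le r-2$ destroys every $K_r$; the substance is the lower bound — designing the core/petal incidence structure so that a genuinely extremal bipartite $K_{2,r-2}$-free graph can be inserted, and then the elementary but finicky check that the ``$r$-th core vertex'' obstruction leaves no room for a Berge-$K_{2,r-2}$. Note the construction degenerates for $r=3$, where $t=1$ and Berge-$K_{2,1}$-freeness just says that $\cH$ is a matching of hyperedges; so the $\Theta(n^{3/2})$ claim should be read for $t\ge 2$.
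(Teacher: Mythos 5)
Your argument is correct, and it is worth noting that the paper itself does not reprove this statement (it is quoted from Gerbner--Palmer); the closest in-paper material is the proof of Theorem \ref{gp2} and the construction in Corollary \ref{runifbergek2t}(a), and your proposal lines up with these with one genuine difference. Your upper bound is exactly the Theorem \ref{gp2} mechanism: the greedy injective encoding of hyperedges by pairs gives $ex_r(n,F)\le ex(n,F)+ex(n,K_r,F)$, and your observation that $K_{2,t}\subseteq K_r$ for $t\le r-2$ kills the $ex(n,K_r,K_{2,t})$ term, so the bound $O(n^{3/2})$ follows from Theorem \ref{K2tgraph}. For the lower bound the paper's route (Corollary \ref{runifbergek2t}(a)) also blows up an extremal bipartite $K_{2,t}$-free graph from Theorem \ref{biparitieturanK2tgraph}, but with the balanced split $\lfloor r/2\rfloor+\lceil r/2\rceil$ of each hyperedge, whereas you use the unbalanced split $(r-1)+1$ (cores plus petals). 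Both give the same count $ex(n/r,n/r,K_{2,t})=\Theta(n^{3/2})$ hyperedges; the trade-off is range of validity: your case analysis (which is sound --- the key contradictions being $t+2>r-1$ vertices forced into one core, coinciding $e_i$'s, or a $K_{2,t}$ in the incidence graph) needs $t\ge r-2$ and $t\ge 2$, which is precisely the case $t=r-2$, $r\ge 4$ at hand, while the paper's balanced split works for all $t>\lceil r/2\rceil-2$ and hence also yields the stronger Corollary \ref{runifbergek2t}(a). Your caveat about $r=3$, $t=1$ is also well taken: there a Berge-$K_{2,1}$-free $3$-uniform hypergraph is a matching, so the $\Theta(n^{3/2})$ clause must indeed be read with $t\ge 2$.
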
 

\noindent
We extend this result to other ranges of $t$ and $r$, and prove more precise bounds in Corollary  \ref{runifbergek2t}.

\vspace{4mm}
\noindent

Timmons studied the same problem for linear hypergraphs and proved the following nice result. 
\begin{thm}[\cite{T2016}]
\label{Timmons_Palmer}
For all $r \ge 3$ and $t \ge 1$,  we have
$$ex_r(n, \{C_2, K_{2,t}\})  \le \frac{\sqrt{2(t+1)}}{r}n^{3/2} +  \frac{n}{r}.$$
Let $r\ge 3$ be an integer and $l$ be any integer with $2l + 1 \ge r$. If $q\ge 2lr^3$
is a power of an odd prime and $n = rq^2$, then
$$ex_r(n, \{C_2, K_{2,t}\}) \ge\frac{l}{r^{3/2}}n^{3/2}+O(n),$$ 
where $t-1=(r-1)(2l^2-l)$.
\end{thm}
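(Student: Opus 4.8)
I would start by noting that the forbidden $C_2$ merely records linearity: a hypergraph contains a Berge-$C_2$ exactly when two of its hyperedges meet in at least two vertices, so $ex_r(n,\{C_2,K_{2,t}\})$ is the largest number of hyperedges in a \emph{linear} $r$-uniform hypergraph $\cH$ on $n$ vertices having no Berge-$K_{2,t}$. Fix such an $\cH$ with $m$ hyperedges and pass to its $2$-shadow graph $G$ on $[n]$, where $u,v$ are joined iff some hyperedge contains both. Linearity means each hyperedge contributes a clique $K_r$ on entirely new edges of $G$, so $G$ is an edge-disjoint union of $m$ copies of $K_r$ and $|E(G)|=m\binom r2$. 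The plan is to bound, for every pair $x\ne y$, the codegree $\mu(x,y):=|N_G(x)\cap N_G(y)|$, and then run the classical K\H{o}v\'ari--S\'os--Tur\'an count of paths of length two together with convexity of $x\mapsto\binom x2$.

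The heart is the codegree bound. Fix $x\ne y$. By linearity every common $G$-neighbour $z$ of $x$ and $y$ determines a unique hyperedge $e(z)\supseteq\{x,z\}$ and a unique hyperedge $f(z)\supseteq\{z,y\}$, and at most $r-2$ of these $z$ satisfy $e(z)=f(z)$ (such a hyperedge contains $\{x,y\}$, and there is at most one of those). For the remaining common neighbours form the bipartite graph $B_{x,y}$ whose two sides are the hyperedges through $x$ and the hyperedges through $y$, with an edge $e(z)f(z)$ for each such $z$. Linearity makes $B_{x,y}$ simple with maximum degree at most $r-1$. The crucial observation is that a matching $\{e(z_1)f(z_1),\dots,e(z_t)f(z_t)\}$ of size $t$ in $B_{x,y}$ is precisely a Berge-$K_{2,t}$ with hubs $x,y$ and spokes $z_1,\dots,z_t$: the $2t$ hyperedges are pairwise distinct because a matching uses distinct endpoints, and the $z_i$ are pairwise distinct since $z_i=z_j$ would force $e(z_i)=e(z_j)$, impossible for distinct matching edges. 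Hence Berge-$K_{2,t}$-freeness forces $B_{x,y}$ to have matching number at most $t-1$, so by K\H{o}nig's theorem $B_{x,y}$ has a vertex cover of size at most $t-1$, whence $|E(B_{x,y})|\le(t-1)(r-1)$ and $\mu(x,y)\le(t-1)(r-1)+(r-2)<t(r-1)$. Then $\sum_z\binom{d_G(z)}2=\sum_{\{x,y\}}\mu(x,y)<t(r-1)\binom n2$, while convexity gives $\sum_z\binom{d_G(z)}2\ge n\binom{\bar d}2$ with $\bar d=mr(r-1)/n$; this yields $\bar d(\bar d-1)<t(r-1)n$ and, after a one-line computation, $m\le\frac{\sqrt t}{r\sqrt{r-1}}\,n^{3/2}+\frac nr$, which for $r\ge 3$ is even a shade stronger than the claimed $\frac{\sqrt{2(t+1)}}{r}n^{3/2}+\frac nr$. (Replacing K\H{o}nig by a maximal-matching argument loses a factor $2$ in $\mu$ and a factor $\sqrt2$ in the leading constant, and still delivers the stated inequality.)

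For the lower bound the cleanest route is to exhibit a linear $r$-uniform hypergraph whose $2$-shadow is (close to) a F\"uredi-type extremal $K_{2,s}$-free graph that also carries an edge-decomposition into $K_r$'s; by the reformulation above, proving Berge-$K_{2,t}$-freeness then reduces to checking that every auxiliary bipartite graph $B_{x,y}$ of the construction has matching number below $t$. F\"uredi's construction over a finite field $\mathbb F_q$ using a multiplicative subgroup of $\mathbb F_q^\ast$ of order about $2l$ has exactly the right algebraic flexibility — the hypothesis $2l+1\ge r$ is what lets an $r$-element hyperedge sit among the $\le 2l+1$ slots attached to a coset — and with $n=rq^2$ one obtains $\Theta(lq^3)=\frac{l}{r^{3/2}}n^{3/2}$ hyperedges, linearity being a linear-algebra computation; the threshold $t-1=(r-1)(2l^2-l)=(r-1)\binom{2l}2$ records the worst-case matching number of $B_{x,y}$ (a factor $r-1$ for the free slots in a hyperedge, a factor $\binom{2l}2$ for pairs inside a coset), while $q\ge 2lr^3$ is used to keep all error terms of size $O(n)$. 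The single conceptual ingredient in both directions is the identification of ``Berge-$K_{2,t}$ through $x,y$'' with ``size-$t$ matching in $B_{x,y}$''; once it is in hand the upper bound is routine, and I would expect essentially all the remaining work to sit in the lower bound — designing the incidence rule so the shadow is at once $K_{2,s}$-free enough and $K_r$-decomposable, and carrying out the finite-field solution counts cleanly enough that $q\ge 2lr^3$ suffices — the upper bound requiring only care with the $e(z)=f(z)$ exceptions and the $O(n)$ terms.
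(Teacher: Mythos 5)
This theorem is quoted in the paper from Timmons \cite{T2016} and is not proved there, so there is no in-paper argument to compare against; I can only judge your proposal on its own terms. Your upper-bound argument is essentially complete and correct: the identification of a Berge-$K_{2,t}$ with hubs $x,y$ with a $t$-matching in the auxiliary bipartite graph $B_{x,y}$, the bound $\mu(x,y)\le (t-1)(r-1)+(r-2)$ via K\H{o}nig and the degree bound $\Delta(B_{x,y})\le r-1$, and the standard convexity count on the shadow graph all go through, and indeed yield the slightly stronger constant $\sqrt{t}/(r\sqrt{r-1})$. One small point you should spell out: distinctness of the $2t$ hyperedges across the two sides. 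If $e(z_i)=f(z_j)$ for $i\ne j$, that hyperedge would contain both $x$ and $y$, and then it would also contain $\{z_i,y\}$, forcing $e(z_i)=f(z_i)$ — but such $z$ were already removed, so the cross-side coincidence cannot occur; your stated justification (``a matching uses distinct endpoints'') only covers coincidences within one side.

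The genuine gap is the lower bound. You do not actually produce a construction: you name a ``F\"uredi-type'' finite-field construction with a multiplicative subgroup of order about $2l$, assert that linearity is ``a linear-algebra computation,'' and guess that the parameter $t-1=(r-1)(2l^2-l)$ arises as a worst-case matching number in $B_{x,y}$, while explicitly conceding that the incidence rule, the $K_r$-decomposability of the shadow, the Berge-$K_{2,t}$-freeness verification, and the role of the hypothesis $q\ge 2lr^3$ remain to be worked out. None of these steps is routine — they are exactly where the content of Timmons's lower bound lies (his construction places $r$ coordinatized copies of a plane over $\mathbb{F}_q$ and must be checked both for linearity and for the precise codegree threshold $(r-1)(2l^2-l)$) — so as it stands the second half of the statement is unproven. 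To complete the argument you would need to define the hypergraph explicitly, count its hyperedges to get the $\frac{l}{r^{3/2}}n^{3/2}$ term, and carry out the solution-counting that bounds, for every pair of vertices, the number of pairwise edge-disjoint ``cherries'' through them by $t-1$; your reduction to matchings in $B_{x,y}$ is a reasonable framework for that verification, but the verification itself is missing.
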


\noindent
Note that Timmons mentioned that the upper bound was pointed out by Palmer using methods similar to the ones used in \cite{T2016}. 

\noindent
We improve Theorem \ref{Timmons_Palmer} in  Theorem \ref{linearberge}, Theorem \ref{linearlower} and Theorem \ref{linearlower2}.

\vspace{4mm}

\noindent

Finally we present a simple but useful result of Gerbner and Palmer that connects $ex(n,K_r,F)$ and $ex_r(n,F)$. We include its proof for the sake of completeness.

\begin{thm}[\cite{gp2}]\label{gp2} For $r \ge 2$ and any graph $F$, we have
$$ex(n,K_r,F)\le ex_r(n,F)\le ex(n,K_r,F)+ex(n,F).$$
\end{thm}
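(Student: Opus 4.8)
I would establish the two inequalities separately, each by a short explicit construction.

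\emph{The lower bound $ex(n,K_r,F)\le ex_r(n,F)$.} Take an $F$-free graph $G$ on $n$ vertices with $N(K_r,G)=ex(n,K_r,F)$, and let $\cH$ be the $r$-uniform hypergraph on the same vertex set whose hyperedges are exactly the vertex sets of the copies of $K_r$ in $G$; then $|\cH|=N(K_r,G)$. The only point to verify is that $\cH$ is Berge-$F$-free. Suppose it contained a Berge-$F$, witnessed by distinct vertices $\{v_x : x\in V(F)\}$ and a bijection $\phi\colon E(F)\to E(\cH')$ for some subhypergraph $\cH'\subseteq\cH$, with $\{v_x,v_y\}\subseteq\phi(xy)$ for every edge $xy\in E(F)$. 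Since $\phi(xy)$ is a hyperedge of $\cH$, it is the vertex set of a clique of $G$, so $v_x v_y\in E(G)$; hence $x\mapsto v_x$ embeds $F$ into $G$, contradicting $F$-freeness.

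\emph{The upper bound $ex_r(n,F)\le ex(n,K_r,F)+ex(n,F)$.} Start from a Berge-$F$-free $r$-uniform hypergraph $\cH$ on $n$ vertices with $|\cH|=ex_r(n,F)$, and process its hyperedges in an arbitrary order, maintaining a graph $G$ on $[n]$ that is initially empty. When a hyperedge $h$ is processed: if some pair $\{x,y\}\subseteq h$ is not yet an edge of $G$, add one such pair to $G$ and charge $h$ to this new edge; otherwise every pair inside $h$ is already present, so $h$ spans a copy of $K_r$ in $G$, and we charge $h$ to that copy. Each edge of $G$ is created by, hence charged by, exactly one hyperedge, and each copy of $K_r$ in the final $G$ is charged by at most one hyperedge (namely the one whose vertex set is that clique, and distinct hyperedges are distinct $r$-sets). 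Therefore $|\cH|\le |E(G)|+N(K_r,G)$.

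\emph{Verifying $G$ is $F$-free — the one step needing care.} If $G$ contained a copy of $F$, then each of its edges $e$ was created by a unique hyperedge $h_e$ with $e\subseteq h_e$; since a hyperedge creates at most one edge of $G$, the map $e\mapsto h_e$ is injective, and together with the containments $e\subseteq h_e$ it is exactly a Berge-$F$ inside $\cH$, a contradiction. Hence $G$ is $F$-free, so $|E(G)|\le ex(n,F)$ and $N(K_r,G)\le ex(n,K_r,F)$, which combined with the displayed inequality gives $ex_r(n,F)\le ex(n,K_r,F)+ex(n,F)$. The whole argument is elementary; the only place where one must be attentive is making the bookkeeping of the charging scheme line up precisely with the definition of a Berge-$F$ subhypergraph.
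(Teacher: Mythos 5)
Your proposal is correct and follows essentially the same route as the paper: the lower bound by turning the $r$-cliques of an extremal $F$-free graph into hyperedges, and the upper bound by processing hyperedges in an arbitrary order and charging each either to a newly added pair (the paper's ``blue'' edges) or to the $K_r$ it spans when all its pairs are already present. The only difference is cosmetic — you spell out the Berge-$F$-freeness verifications that the paper leaves implicit.
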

\begin{proof}
 If we are given an $F$-free graph $G$, let us replace each clique of size $r$ in it by a hyperedge. The resulting hypergraph is obviously Berge-$F$-free. This proves the first inequality in the theorem.

Now we prove the second inequality. Suppose we are given a Berge-$F$-free hypergraph $\cH$. We order its hyperedges $h_1, h_2, \dots, h_m$ arbitrarily and perform the following procedure. From each hyperedge $h_i$ we pick exactly one pair $xy \subset h_i$ and color it \emph{blue} only if $xy$ has not already been colored before (by the hyperedges $h_1, h_2, \dots, h_{i-1}$). If all the pairs in $h_i$ have been colored blue already then we say that the hyperedge $h_i$ is blue.  It is easy to see that after this procedure, the number of  hyperedges in $\cH$ is  equal to the number of blue pairs  plus blue hyperedges. Moreover, since $\cH$ is Berge-$F$-free, the graph of blue pairs, $G$, is $F$-free and the number of blue hyperedges is at most the number of $K_r$'s in $G$ as all the pairs contained in a blue hyperedge are blue, finishing the proof.
\end{proof}

\vspace{1.5mm}

\noindent
One of our main results shown in the next section, determines the asymptotics for the Tur\'an number of Berge-$K_{2,t}$ for $t\ge 7$ in the case that $r = 3$. Note that Theorem \ref{gp2} combined with Theorem \ref{ask2t} gives $$(1 + o(1))\frac{1}{6}(t-1)^{3/2}n^{3/2} \le ex_3(n,K_{2,t}) \le (1 + o(1))\frac{1}{6}(t-1)^{3/2}n^{3/2} + (1 + o(1))\frac{\sqrt{t-1}}{2}n^{3/2}.$$ Thus we have an upper bound which differs from the lower bound by $$ex(n, K_{2,t})=  (1+o(1)) \frac{\sqrt{t-1}}{2}n^{3/2}.$$ This has the same order of magnitude in $n$ and a lower order of magnitude in $t$ compared to the lower bound. However, the simple idea used in the proof of Theorem \ref{gp2} is not useful to reduce this gap, we will introduce new ideas. Our main focus in this paper is to determine sharp asymptotics.

\section{Our results} 
\label{our_results}

\subsection{A general theorem}

First we state a general theorem that applies to many graphs and not just $K_{2,t}$. For convenience of notation in the rest of the paper, let us define $$f(r)=
\left(\binom{r}{2}-2\right)\left(1+\left(\binom{r}{2}-1\right)\left(\binom{r}{2}-3\right)\right)$$ and $$g(r)=
f(r)+\binom{r}{2}-2.$$ 

\vspace{2mm}
\begin{thm}\label{main} Let $F$ be a $K_r$-free graph. Let $F'$ be a graph we get by deleting a vertex from $F$, and assume there are constants $c$ and $i$ with $0\le i\le r-1$ with $ex(n, K_{r-1},F')\le cn^{i}$ for every $n$. 

\vspace{3mm}

(a) If $cn^{i-1}\ge rg(r)/2,$ then we have $$ex_r(n,F)\le 2c\frac{ex(n,F)n^{i-1}}{r}.$$

\vspace{2mm}

(b) If $cn^{i-1}\le rg(r)/2,$ then we have $$ex_r(n,F)\le g(r)\cdot ex(n,F).$$

(c) If $i>1$ and $n$ is large enough, then we have
$$ex_r(n,F) \le c(r-1)ex(n,F)^{i} \left(\frac{2}{n}\right)^{i-1}.$$

\end{thm}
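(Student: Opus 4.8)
The plan is to take a Berge-$F$-free $r$-uniform hypergraph $\cH$ on $[n]$ and extract an auxiliary graph $G$ on $[n]$ that records, for each hyperedge, a carefully chosen pair contained in it, while leaving enough ``spare'' pairs inside each hyperedge to control the multiplicity of pairs in $G$. Concretely, I would process the hyperedges one at a time in an arbitrary order $h_1,\dots,h_m$ and, from each $h_i$, select one pair $x_iy_i\subseteq h_i$ and add it to $G$, subject to the rule that we never let a pair of $[n]$ be selected too many times — say at most some threshold $M$ (to be optimized). A pair $xy\subseteq h_i$ is \emph{forbidden} for $h_i$ if it has already been chosen $M$ times by earlier hyperedges; the key point is that if every pair of $h_i$ is forbidden, then $h_i$ ``sees'' $M\binom{r}{2}$ earlier hyperedges through its pairs, and I want to argue that this situation cannot occur (or occurs rarely), because the set of $M\binom{r}{2}$ earlier hyperedges through the $\binom{r}{2}$ pairs of $h_i$, together with $h_i$ itself, would contain a Berge-$F$. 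This is where the $K_r$-free hypothesis on $F$ and the bound $ex(n,K_{r-1},F')\le cn^i$ enter: restricting $G$ to the vertex set of a would-be copy, the density of $K_{r-1}$'s forces a $K_{r-1}$ that together with a common neighbour yields a Berge embedding of $F$ — this counting is what pins down the threshold $M$ as roughly $cn^{i-1}$, matching the case split $cn^{i-1}\gtrless (r^3-r^2-4r)/2$.

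For part (a), once every hyperedge contributes a chosen pair and no pair is chosen more than $M\approx 2cn^{i-1}/r$ times, we get $|\cH|=m\le M\cdot e(G)$; and $G$ is $F$-free (a copy of $F$ in $G$ would lift to a Berge-$F$ in $\cH$ since each edge of $F$ came from a distinct hyperedge of $\cH$ — one has to be slightly careful that the lifted hyperedges are distinct, but choosing one hyperedge per $G$-edge from among its $\le M$ ``owners'' in a system-of-distinct-representatives fashion, or just tracking which hyperedge first selected each pair, handles this). Hence $|\cH|\le M\cdot ex(n,F)$, which is the claimed bound after plugging in $M$. Part (b) is the regime where the threshold $M$ degenerates to the constant $\lfloor (r^3-r^2-4r)/2 / (n^{i-1}c)\rfloor$-type quantity being $\ge 1$ but the bound $2cn^{i-1}/r$ being smaller than $r^2-r-4$; there one argues directly that with $M=r^2-r-4$ no hyperedge can have all pairs forbidden (the Berge-$F$ would already appear), so $|\cH|\le (r^2-r-4)\,ex(n,F)$ with no density argument needed.

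For part (c), the idea is to iterate or to be greedier: instead of a global threshold $M$, I would bound the number of hyperedges whose chosen pair lies in a given ``popular'' part of $G$ by using the $K_{r-1}$-count bound on induced subgraphs. Roughly, partition the selected pairs by how saturated their endpoints' common neighbourhoods are; a hyperedge $h$ with chosen pair $xy$ such that $xy$ already has $\ge 2e(G)/n$ co-selected neighbours in $G$ is plentiful enough that the $(r-1)$-clique count $ex(n,K_{r-1},F')\le cn^i$ caps the total over all such $h$ at about $c(r-1)e(G)^{i}(2/n)^{i-1}$; combined with the trivial bound $e(G)\le ex(n,F)$ and convexity/monotonicity in $e(G)$, this yields the stated inequality for $i>1$ and $n$ large. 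The main obstacle I expect is the Berge-embedding lemma at the heart of all three parts: showing that a hyperedge all of whose $\binom{r}{2}$ pairs are ``owned'' by many earlier hyperedges must complete a Berge-$F$. This requires turning the abundance of owning hyperedges into an abundance of $K_{r-1}$'s in the relevant induced subgraph of $G$ (using $F'$-freeness after a vertex deletion), then using a common hyperedge through $h$ to realize the last vertex of $F$ — the bookkeeping to guarantee all the Berge hyperedges are distinct, and that the $K_{r-1}$ plus the extra vertex actually assemble into $F$ rather than some other graph, is the delicate part and will drive the exact constants $r^3-r^2-4r$, $r^2-r-4$, and $r-1$.
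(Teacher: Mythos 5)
There is a genuine gap, and it sits exactly where you yourself locate ``the main obstacle'': the lemma that a hyperedge all of whose $\binom{r}{2}$ pairs are already owned by $M$ earlier hyperedges must complete a Berge-$F$. This is false for a general $K_r$-free $F$ and is not the mechanism of the proof. For instance with $F=K_{2,t}$, the $M\binom{r}{2}$ owning hyperedges through the pairs of a fixed $r$-set $h$ can be pairwise disjoint outside $h$, and then no Berge-$K_{2,t}$ appears no matter how large $M$ is. Consequently ``saturated'' hyperedges genuinely occur, and in the regime of part (a) with $i>1$ they are the \emph{dominant} contribution: their number can be of order $e(G)\,n^{i-1}$, far exceeding $M\,e(G)$ for any constant-type threshold. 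The correct mechanism is the reverse of what you propose: a saturated hyperedge has all $\binom{r}{2}$ of its pairs present in the auxiliary graph $G$, hence spans a $K_r$ in $G$; since $\cH$ is simple, distinct saturated hyperedges give distinct $K_r$'s; and since $G$ is $F$-free, the number of $K_r$'s in $G$ with $x$ edges is at most $2cxn^{i-1}/r$, because the neighbourhood of each vertex $v$ is $F'$-free and so contains at most $ex(d(v),K_{r-1},F')\le c\,d(v)^{i}\le cn^{i-1}d(v)$ copies of $K_{r-1}$ (Claim \ref{based_on_x}). So the hypothesis $ex(n,K_{r-1},F')\le cn^i$ is used to \emph{count} the saturated hyperedges, not to forbid them; your plan to use it to ``force a Berge embedding of $F$'' from a $K_{r-1}$ plus a common neighbour cannot work, since $G$ is $F$-free and a $K_{r-1}$ together with one more vertex is not a copy of $F$.

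Two further points. First, even after repairing the main lemma along these lines, your accounting gives $|\cH|\le\bigl(M+2cn^{i-1}/r\bigr)ex(n,F)$, which does not match the stated constant; the paper instead classifies pairs as red (covered more than $\binom{r}{2}-2$ times) or blue, puts \emph{all} red pairs into $G$, uses Hall's theorem to show every copy of $F$ in $G$ contains a blue edge (Claim \ref{redg2}, exploiting that a hyperedge carries at most $\binom{r}{2}-1$ edges of a $K_r$-free $F$), and then deletes a few red edges to obtain two $F$-free graphs, so that blue, green and purple hyperedges together are bounded by $\max\bigl\{2cn^{i-1}/r,\,r^2-r-4\bigr\}\cdot ex(n,F)$ with no additive loss. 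Second, part (c) requires the second estimate in Claim \ref{based_on_x} --- iteratively deleting edges lying in few $K_r$'s until a nonempty $F$-free subgraph remains, locating a vertex of degree at most $2\,ex(n,F)/n$, and bounding the $K_r$'s through it by $c\,d(v)^{i}$ --- and this idea is absent from your sketch; the vague ``partition by saturation'' step does not produce the exponent $i$ on $ex(n,F)$.
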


\vspace{3mm} 

\noindent

\begin{rem}\label{lini} The proof of the above theorem can be modified to show that if $F$ contains $K_r$, similar upper bounds hold with slightly different multiplicative constant factors. 
Theorem \ref{main} together with these inequalities show that if every cycle in $F$ contains the same vertex $v$ for some $v \in V(F)$, then we have $$ex_r(n,F)=O(ex(n,F))$$
for every $r\ge 3$.
\end{rem}

\vspace{2mm}

\noindent
Also note that $g(3)=2$. Moreover, if $F=K_{2,t}$ and $F'=K_{1,t}$, or if $F=C_{t+2}$ and $F'=P_{t+1}$, then we have $$ex(n, K_2, F') = ex(n,F')\le (t-1)\frac{n}{2}.$$
Therefore, using Theorem \ref{main} part (a) with $c = \frac{t-1}{2}$ and $i = 1$ implies the upper bounds in Corollary \ref{maincor} and Corollary \ref{even_cycle} which are given below.

\subsubsection{Asymptotics for Berge-$K_{2,t}$}

\begin{kov}\label{maincor} Let $t\ge 7$. Then $$ex_3(n,K_{2,t})=\frac{1}{6}(t-1)^{3/2}n^{3/2}(1+o(1)).$$

\end{kov}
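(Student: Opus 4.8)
The plan is to sandwich $ex_3(n,K_{2,t})$ between two asymptotically equal bounds: the upper one coming from the general Theorem \ref{main}, and the lower one from the elementary Theorem \ref{gp2} together with the Alon--Shikhelman count.

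For the lower bound I would simply invoke the first inequality of Theorem \ref{gp2}, namely $ex(n,K_3,K_{2,t}) \le ex_3(n,K_{2,t})$, and then substitute Theorem \ref{ask2t}, which gives $ex(n,K_3,K_{2,t}) = \frac16 (t-1)^{3/2} n^{3/2}(1+o(1))$. This already yields $ex_3(n,K_{2,t}) \ge \frac16(t-1)^{3/2}n^{3/2}(1+o(1))$, with no additional work.

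For the upper bound I would apply Theorem \ref{main}(a) with $r=3$, $F = K_{2,t}$ (which is indeed $K_3$-free), and $F' = K_{1,t}$, the graph obtained by deleting a vertex in the part of size $2$. Since a $K_{1,t}$-free graph has maximum degree at most $t-1$, we have $ex(n,K_2,K_{1,t}) = ex(n,K_{1,t}) \le \frac{t-1}{2}\,n$, so we may take $c = \frac{t-1}{2}$ and $i = 1$. With $i=1$ the hypothesis $cn^{i-1} \ge \frac{r^3 - r^2 - 4r}{2}$ of part (a) reads $\frac{t-1}{2} \ge \frac{27-9-12}{2} = 3$, i.e.\ exactly $t \ge 7$ — this is precisely where the hypothesis on $t$ is used. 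Theorem \ref{main}(a) then gives
$$ex_3(n,K_{2,t}) \le \frac{2c}{r}\, ex(n,K_{2,t}) = \frac{t-1}{3}\, ex(n,K_{2,t}),$$
and substituting F\"uredi's asymptotic $ex(n,K_{2,t}) = \frac{\sqrt{t-1}}{2} n^{3/2} + O(n^{4/3})$ from Theorem \ref{K2tgraph} yields $ex_3(n,K_{2,t}) \le \frac{t-1}{3}\cdot \frac{\sqrt{t-1}}{2} n^{3/2}(1+o(1)) = \frac16 (t-1)^{3/2} n^{3/2}(1+o(1))$, matching the lower bound.

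There is no genuine obstacle left at this stage: all the difficulty has been absorbed into Theorem \ref{main} (whose proof will need the new ideas alluded to after Theorem \ref{gp2}) and into the two external inputs, F\"uredi's bound and the Alon--Shikhelman generalized Tur\'an count. The only points requiring care are the bookkeeping checks — that $K_{2,t}$ is $K_3$-free, that deleting a vertex from $K_{2,t}$ can produce $K_{1,t}$, and that the threshold $\frac{r^3-r^2-4r}{2}$ evaluates to $3$ at $r=3$ so that the condition of part (a) becomes exactly $t\ge 7$.
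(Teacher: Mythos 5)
Your proposal is correct and follows essentially the same route as the paper: the lower bound via the first inequality of Theorem \ref{gp2} combined with Theorem \ref{ask2t}, and the upper bound via Theorem \ref{main}(a) with $F=K_{2,t}$, $F'=K_{1,t}$, $c=\frac{t-1}{2}$, $i=1$, which at $r=3$ requires exactly $t\ge 7$ and, together with Theorem \ref{K2tgraph}, gives the matching $\frac{1}{6}(t-1)^{3/2}n^{3/2}(1+o(1))$ upper bound.
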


\noindent
Our lower bound in the above result follows from Theorem \ref{gp2} and Theorem \ref{ask2t}.
The latter theorem considers the $K_{2,t}$-free graph $G$ constructed by F\"uredi in Theorem \ref{K2tgraph} and shows that the number of triangles in it is at least $\frac{1}{6}(t-1)^{3/2}n^{3/2}(1+o(1))$. Replacing each triangle in $G$ by a hyperedge on the same vertex set, we get a Berge-$K_{2,t}$-free hypergraph containing the desired number of hyperedges.

Below we show the analogous result for general $r$-uniform hypergraphs that is sharp in the order of magnitude of $n$.

\begin{kov}\label{runifbergek2t} (a) If $t > \lc \frac{r}{2} \rc-2 \ge 0$, then we have 

$$(1+o(1))\frac{\sqrt{t-1}}{r^{3/2}}n^{3/2} \le ex_r(n,K_{2,t}).$$

\vspace{2mm}

(b) If $\frac{\binom{t}{r-1}}{t} \ge rg(r)/2$, then we have

$$ex_r(n,K_{2,t}) \le (1+o(1)) \frac{\sqrt{(t-1)}\binom{t}{r-1}}{r \cdot t} n^{3/2} .$$

If $\frac{\binom{t}{r-1}}{t} \le rg(r)/2$, then we have

$$ex_r(n,K_{2,t}) \le  (1+o(1)) g(r)\sqrt{t-1}n^{3/2}.$$

\end{kov}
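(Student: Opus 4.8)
\textbf{Proof proposal for Corollary \ref{runifbergek2t}.}

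The plan is to derive both parts by combining the general machinery already available: for the lower bound (part (a)), the construction-and-count idea behind Theorem \ref{gp2} together with a suitable $K_{2,t}$-free host graph, and for the upper bounds (part (b)), Theorem \ref{main} applied with $F = K_{2,t}$, $F' = K_{1,t}$, using the known bound $ex(n, K_{r-1}, K_{1,t}) \le \binom{t}{r-1}\frac{n}{t}$ and the asymptotics $ex(n,K_{2,t}) = \frac{\sqrt{t-1}}{2}n^{3/2}(1+o(1))$ from Theorem \ref{K2tgraph}.

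For part (a), first I would take a $K_{2,t}$-free graph $G$ on $n$ vertices achieving $ex(n,K_{2,t}) = \frac{\sqrt{t-1}}{2}n^{3/2}(1+o(1))$ edges (F\"uredi's extremal graph from Theorem \ref{K2tgraph}). Since $G$ is essentially $\sqrt{t-1}\,\sqrt{n}$-regular, the number of $K_{\lceil r/2 \rceil}$'s — or more precisely the number of pairs of vertices with many common neighbors — is large; the key point is that the condition $t > \lceil r/2 \rceil - 2$ guarantees that a $K_{2,t}$-free graph can still have lots of cliques of the relevant size. The cleanest route is: replace each edge of $G$ by an $r$-set by greedily padding with new "dummy" vertices (an expansion-type construction), or, following the triangle-to-hyperedge idea used for Corollary \ref{maincor}, replace cliques of size $\lceil r/2 \rceil$ by hyperedges after pairing them up — I would need to check which construction actually yields $\frac{\sqrt{t-1}}{r^{3/2}}n^{3/2}(1+o(1))$ Berge-$K_{2,t}$-free hyperedges and verify Berge-$K_{2,t}$-freeness. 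The natural candidate is to take roughly $n/r$ disjoint "blocks" or to blow up F\"uredi's construction, so that a $K_{2,t}$ in the shadow would force a $K_{2,t}$ in $G$.

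For part (b), I would invoke Theorem \ref{main} directly. With $F = K_{2,t}$, which is $K_r$-free whenever $t$ is such that $K_{2,t}$ contains no $K_r$ (true for $r \ge 4$, and for $r=3$ when $t \ge 2$ since $K_{2,t}$ is triangle-free only for... actually $K_{2,t}$ contains triangles, so for $r=3$ one uses Remark \ref{lini}; for $r \ge 4$, $K_{2,t}$ is $K_4$-free hence $K_r$-free), and $F' = K_{1,t}$, we have $ex(n, K_{r-1}, K_{1,t}) \le \binom{t}{r-1}\frac{n}{t}$ — this is the statement that a graph with no vertex of degree $t$ has at most $\frac{n}{t}\binom{t}{r-1}$ copies of $K_{r-1}$ (choose the clique's "extra" vertices among a vertex's $< t$ neighbors). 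So I take $c = \binom{t}{r-1}/t$ and $i = 1$. Then part (a) of Theorem \ref{main} applies precisely when $cn^{0} = \binom{t}{r-1}/t \ge \frac{r^3-r^2-4r}{2}$, i.e. when $t \ge (r-1)\frac{r^3-r^2-4r}{2}+1$ (after checking $\binom{t}{r-1}/t \ge (t-1)/( r-1)$-type estimates — one should be a little careful that the threshold in Theorem \ref{main}(a) translates to exactly the stated bound on $t$, which is the one genuine calculation here), giving $ex_r(n,K_{2,t}) \le \frac{2c}{r}ex(n,K_{2,t})n^{0} = \frac{2\binom{t}{r-1}}{rt}\cdot\frac{\sqrt{t-1}}{2}n^{3/2}(1+o(1)) = \frac{\sqrt{t-1}\binom{t}{r-1}}{rt}n^{3/2}(1+o(1))$; and in the complementary range part (b) of Theorem \ref{main} gives $ex_r(n,K_{2,t}) \le (r^2-r-4)ex(n,K_{2,t}) = \frac{(r^2-r-4)\sqrt{t-1}}{2}n^{3/2}(1+o(1))$, exactly as claimed.

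The main obstacle I anticipate is not the upper bound — that is a fairly mechanical substitution into Theorem \ref{main} modulo the threshold bookkeeping — but rather pinning down the lower-bound construction in part (a) so that the constant $\frac{1}{r^{3/2}}$ comes out correctly and the resulting hypergraph is provably Berge-$K_{2,t}$-free under the weak hypothesis $t > \lceil r/2 \rceil - 2$. The subtlety is that when $r$ is even versus odd the "natural" clique size $\lceil r/2 \rceil$ behaves differently, and one must ensure that a Berge-$K_{2,t}$ in the hypergraph (two vertices each connected by $t$ internally-disjoint Berge-paths of length $2$) genuinely pulls back to a $K_{2,t}$ in $G$ rather than being created by the padding vertices; handling the book-keeping of which original vertices a hyperedge "uses" in its core is where care is needed.
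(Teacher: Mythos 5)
Your treatment of part (b) is essentially the paper's proof: bound $ex(n,K_{r-1},K_{1,t})\le \frac{n}{t}\binom{t}{r-1}$ via the degree bound $t-1$, plug $c=\binom{t}{r-1}/t$, $i=1$ into Theorem \ref{main}, and finish with Theorem \ref{K2tgraph}; the threshold on $t$ is just the comparison of $c$ with $\frac{r^3-r^2-4r}{2}$. One factual slip: $K_{2,t}$ is bipartite, hence triangle-free, so it is $K_r$-free for every $r\ge 3$ and Theorem \ref{main} applies directly even when $r=3$; your aside that ``$K_{2,t}$ contains triangles'' and that one would need Remark \ref{lini} (which would change the constants and no longer give the stated bound) is wrong, but harmless once corrected.

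Part (a), however, is where the actual content of the corollary lies, and there your proposal has a genuine gap: you never commit to a construction, and you explicitly leave both the constant count and the Berge-$K_{2,t}$-freeness unverified. Of the candidates you float, the edge-expansion with dummy vertices cannot work even at the level of orders of magnitude: with $r-2$ new vertices per hyperedge, the dummies consume the vertex budget, so on $n$ vertices you only get $O(n)$ hyperedges, not $\Theta(n^{3/2})$. The paper instead takes the \emph{bipartite} extremal graph of Theorem \ref{biparitieturanK2tgraph} with classes $A,B$ of size $n/r$ (giving $\sqrt{t-1}\,(n/r)^{3/2}(1+o(1))$ edges, hence the constant $r^{-3/2}$), and blows up each vertex of $A$ into $\lfloor r/2\rfloor$ copies and each vertex of $B$ into $\lceil r/2\rceil$ copies, so each edge becomes one $r$-set. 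The freeness verification is not the quick ``a $K_{2,t}$ in the shadow pulls back to $G$'' argument you sketch: copies of one vertex lie in many common hyperedges, so the shadow is full of $K_{2,t}$'s, and one must argue via the Berge structure (distinct hyperedges for the edges $pr,qr$) that such configurations cannot be realized. This case analysis uses bipartiteness of $G$ (no hyperedge meets two distinct blocks on the same side) and is exactly where the hypothesis $t>\lceil r/2\rceil-2$ enters: it forces $|B_i|=\lceil r/2\rceil< t+2=|\{p,q\}\cup T|$, so some vertex of the alleged $K_{2,t}$ lies outside the block, and its degree-$\ge 2$ requirement then contradicts the uniqueness of the hyperedge joining it to that block. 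Without choosing this construction and carrying out that verification, the lower bound is not proved.
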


\noindent
This result improves Theorem \ref{gp1}. 

\begin{rem}
If $t \le 6$, then Theorem \ref{main} gives $$ex_3(n,K_{2,t}) \le \sqrt{t-1} \cdot n^{3/2}$$ and the lower bound in Corollary \ref{maincor} still holds. On the other hand putting $r=3$ and $t=2$ into Corollary \ref{runifbergek2t} (a), we get a lower bound of $n^{3/2}/3\sqrt{3}$, which is larger. For this particular case, the best upper bound known is $ex_3(n,K_{2,2})\le (1+o(1))n^{3/2}/\sqrt{10}$ due to Ergemlidze, Gy\H{o}ri, Methuku, Tompkins and Salia \cite{C_4}.
\end{rem}

\subsubsection{Improved bounds for Berge-$C_{2k}$}

\begin{kov}
\label{even_cycle}
Let $k\ge 5$. Then $$ex_3(n,C_{2k})\le \frac{2k-3}{3} \cdot ex(n,C_{2k}).$$
\end{kov}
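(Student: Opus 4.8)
The plan is to derive Corollary \ref{even_cycle} directly from Theorem \ref{main} applied to $F=C_{2k}$, in the same way Corollary \ref{maincor} is obtained, but now tracking the constants carefully for $r=3$. Set $F=C_{2k}$ and $F'=P_{2k-1}$, the path obtained by deleting a vertex of the cycle. Since $C_{2k}$ contains no triangle (as $k\ge 2$), $F$ is $K_r$-free with $r=3$, so Theorem \ref{main} applies. We take $r-1=2$, so we need a bound of the form $ex(n,K_2,P_{2k-1})=ex(n,P_{2k-1})\le cn^{i}$. By the Erd\H{o}s--Gallai theorem (equivalently Theorem \ref{Luo} with $r=2$), $ex(n,P_{2k-1})\le \frac{2k-2}{2}n=(k-1)n$, so we may take $i=1$ and $c=k-1$.

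First I would check which case of Theorem \ref{main} we land in. With $i=1$ we have $cn^{i-1}=c=k-1$, and with $r=3$ the threshold is $\frac{r^3-r^2-4r}{2}=\frac{27-9-12}{2}=3$. So for $k\ge 5$ we have $c=k-1\ge 4>3$, hence we are in case (a), which gives
\begin{equation*}
ex_3(n,C_{2k})\le 2c\,\frac{ex(n,C_{2k})\,n^{i-1}}{r}=\frac{2(k-1)}{3}\,ex(n,C_{2k}).
\end{equation*}
This is not quite the claimed bound $\frac{2k-3}{3}ex(n,C_{2k})$; it is off by $\frac{1}{3}ex(n,C_{2k})$. So the second step is to improve the constant by $1/3$, which must come from a small refinement in the proof of Theorem \ref{main}(a) rather than a black-box application: the point is presumably that one can use a slightly better bound on $ex(n,K_2,P')$ in the relevant counting — for instance, a path on $2k-1$ vertices that is $C_{2k}$-free and also arises as a link in the hypergraph carries extra structure, or the extremal count of paths avoids the last unit. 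Concretely, I expect the argument of Theorem \ref{main} counts, for each hyperedge, the pairs it contributes and bounds the number of ``new'' pairs using $ex(n,K_{r-1},F')$; when $r=3$ and $F'=P_{2k-1}$ one can replace $c=k-1$ by an effective $c'=k-3/2$ in that step because a hyperedge contributes three pairs sharing vertices and one can save on double-counting, yielding $\frac{2k-3}{3}ex(n,C_{2k})$.

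The main obstacle, then, is exactly this last $1/3$-saving: verifying that the proof of Theorem \ref{main}(a), specialized to $r=3$, $F'=P_{2k-1}$, genuinely yields the coefficient $\frac{2k-3}{3}$ and not merely $\frac{2k-2}{3}$. I would handle it by reopening the counting in the proof of Theorem \ref{main} (stated later in Section \ref{BergeF}): isolate where the factor $2c/r$ arises, substitute the Erd\H{o}s--Gallai bound on paths, and check whether the extremal configuration forces the loss of a unit — e.g.\ because in a linear-type double count each vertex of degree $d$ contributes $\binom{d}{2}$ blue pairs and the path bound $ex(n,P_{2k-1})\le (k-1)n$ is attained only by disjoint $K_{2k-2}$'s, which cannot all appear as links simultaneously in a Berge-$C_{2k}$-free host. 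If the refinement does not drop out cleanly, the fallback is to prove Corollary \ref{even_cycle} from scratch: take a Berge-$C_{2k}$-free $3$-uniform $\cH$, choose for each hyperedge a representative pair as in the proof of Theorem \ref{gp2}, observe the graph $G$ of chosen pairs is $C_{2k}$-free so $|E(G)|\le ex(n,C_{2k})$, and then bound the number of hyperedges mapping to a single pair $xy$ (equivalently, bound how many hyperedges are ``non-representative'') by a local path-counting argument giving the $\frac{2k-3}{3}$ ratio overall. Either way, the comparison with Theorem \ref{foc2k}'s first inequality $ex_3(n,C_{2k})\le \frac{2k}{3}ex(n,C_{2k})$ shows the improvement is by an additive $\frac{3}{3}=1$ in the coefficient, which is the precise content to be extracted.
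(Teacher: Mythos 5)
Your overall strategy is exactly the paper's: apply Theorem \ref{main} with $F=C_{2k}$, $F'=P_{2k-1}$, $r=3$, $i=1$. The gap is in the value of $c$. The Erd\H{o}s--Gallai bound (equivalently Theorem \ref{Luo} with $r=2$) for a path on $2k-1$ \emph{vertices} is
$$ex(n,P_{2k-1})=ex(n,K_2,P_{2k-1})=\frac{n}{2k-2}\binom{2k-2}{2}=\frac{(2k-3)n}{2},$$
not $\frac{2k-2}{2}n=(k-1)n$ as you wrote; you have used the bound for $P_{2k}$-free graphs (or confused the vertex/edge conventions). With the correct $c=\frac{2k-3}{2}$, Theorem \ref{main}(a) immediately gives $ex_3(n,C_{2k})\le \frac{2c}{3}\,ex(n,C_{2k})=\frac{2k-3}{3}\,ex(n,C_{2k})$, which is the statement — no refinement of the proof of Theorem \ref{main} is needed. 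Moreover, the case-(a) condition $c\ge \frac{r^3-r^2-4r}{2}=3$ becomes $\frac{2k-3}{2}\ge 3$, i.e.\ exactly $k\ge 5$, which explains the hypothesis of the corollary; with your $c=k-1$ the threshold would already be met at $k=4$, which is a further sign the constant is off. This is precisely how the paper derives Corollary \ref{even_cycle}: right after Theorem \ref{main} it notes that for $F=C_{t+2}$, $F'=P_{t+1}$ one has $ex(n,F')\le (t-1)\frac{n}{2}$, i.e.\ $c=\frac{2k-3}{2}$ when $t+2=2k$.

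The second half of your proposal — reopening the counting in Theorem \ref{main}(a) to manufacture an ``effective $c'=k-3/2$'' via savings on double-counting, or falling back on a from-scratch representative-pair argument — is therefore chasing a problem that does not exist, and as written it is the only place where your argument is genuinely incomplete: you never actually establish the claimed $1/3$ saving, you only speculate where it might come from. The $c'=k-\frac{3}{2}$ you guess at is simply the correct Erd\H{o}s--Gallai constant for $P_{2k-1}$. Replace the miscomputed bound and delete the speculative refinement, and your first paragraph already constitutes the paper's proof.
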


\noindent
Note that $ex(n, K_3, C_{2k})\le \frac{2k-3}{3}ex(n, C_{2k})$ (the second inequality from Theorem \ref{foc2k}) and Theorem \ref{gp2} implies $$ex_3(n, C_{2k})\le \frac{2k}{3}ex(n, C_{2k}),$$
which is the first inequality from Theorem \ref{foc2k}. Here we remove the difference between $ex_3(n,C_{2k})$ and $ex(n,K_3,C_{2k})$, as we do in the case of $K_{2,t}$ in Corollary \ref{maincor}.

\vspace{2mm}

\noindent
For the $r$-uniform case, we have the following corollary giving a new proof of Theorem \ref{jiangma}. We note that the multiplicative factor given in Corollary \ref{jiangma2} is better than the one obtained in the proof of Theorem \ref{jiangma} by Jiang and Ma \cite{JM2016}, whenever $r \ge 8$ or $k \ge 3$.

\begin{kov}\label{jiangma2} If $n,k \ge 2$ and $r \ge 4$, then we have
$$ex_r(n,C_{2k}) \le \max\left \{\frac{1}{r(k-1)}\binom{2k-2}{r-1}, g(r) \right \}  \cdot ex(n, C_{2k}) =$$ $$= O_r(k^{r-2}) \cdot ex(n,C_{2k}).$$
\end{kov}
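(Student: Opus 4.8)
The plan is to apply Theorem \ref{main} with $F = C_{2k}$, which is $K_r$-free for $r \ge 4$ (indeed $C_{2k}$ contains no triangle when $k \ge 2$, so it certainly contains no $K_r$ for $r \ge 4$). Deleting a vertex from $C_{2k}$ gives the path $P_{2k-1}$ on $2k-1$ vertices, so we take $F' = P_{2k-1}$. By Luo's theorem (Theorem \ref{Luo}), applied with $K_{r-1}$ and the path $P_{2k-1}$, we have
\[
ex(n, K_{r-1}, P_{2k-1}) = \frac{n}{2k-2}\binom{2k-2}{r-1},
\]
so the hypothesis of Theorem \ref{main} is satisfied with $i = 1$ and $c = \frac{1}{2k-2}\binom{2k-2}{r-1}$.

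Next I would split into the two cases of Theorem \ref{main}(a) and (b), according to whether $cn^{i-1} = c \ge \frac{r^3 - r^2 - 4r}{2}$ or not (here $i = 1$, so $n^{i-1} = 1$ and the threshold does not depend on $n$). In the first case, part (a) gives
\[
ex_r(n, C_{2k}) \le \frac{2c}{r}\, ex(n, C_{2k}) = \frac{1}{r(k-1)}\binom{2k-2}{r-1} ex(n, C_{2k});
\]
in the second case, part (b) gives $ex_r(n, C_{2k}) \le (r^2 - r - 4)\, ex(n, C_{2k})$. Taking the maximum of the two bounds covers both cases uniformly, which is exactly the first displayed bound in the statement.

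For the second equality, I would observe that $\binom{2k-2}{r-1} = O_r(k^{r-1})$ trivially, but the factor $\frac{1}{r(k-1)}$ shaves off one power of $k$, giving $\frac{1}{r(k-1)}\binom{2k-2}{r-1} = O_r(k^{r-2})$; the constant term $r^2 - r - 4$ is $O_r(1)$, hence absorbed. Combined with the Bondy–Simonovits bound $ex(n, C_{2k}) = O_k(n^{1+1/k})$, this reproduces $ex_r(n, C_{2k}) \le O_r(k^{r-2})\, ex(n, C_{2k})$, matching Theorem \ref{jiangma}. I do not expect any genuine obstacle here: the corollary is a direct specialization of Theorem \ref{main} once the correct choice $F' = P_{2k-1}$ and the invocation of Luo's formula are in place. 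The only point requiring a little care is the bookkeeping in the $O_r(k^{r-2})$ estimate — verifying that the binomial coefficient divided by $k$ really is $O_r(k^{r-2})$ and that this dominates the $O_r(1)$ alternative for large $k$ — and the remark about the constant being better than Jiang–Ma's for $r \ge 8$ or $k \ge 3$, which is just a comparison of explicit constants and needs no new idea.
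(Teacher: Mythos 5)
Your proposal is correct and follows essentially the same route as the paper: apply Theorem \ref{main} with $F=C_{2k}$, $F'=P_{2k-1}$, $i=1$, and $c$ given by Luo's formula $ex(n,K_{r-1},P_{2k-1})=\frac{n}{2k-2}\binom{2k-2}{r-1}$, then take the maximum of the two resulting bounds. The only cosmetic difference is that the paper always invokes part (a) after enlarging $c$ to the threshold $\frac{r^3-r^2-4r}{2}$ when needed, whereas you invoke (a) or (b) according to the case split; the bounds obtained are identical.
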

\begin{proof}
Using Theorem \ref{Luo}, we get $$ex(n, K_{r-1}, P_{2k-1}) = \frac{n}{2k-2} \binom{2k-2}{r-1}.$$
So in Theorem \ref{main}, we can choose $i = 1$, $F' = P_{2k-1}$ and $c = \frac{1}{2k-2}\binom{2k-2}{r-1}$. To decide whether part (a) or part (b) of Theorem \ref{main} applies, we need to compare $$c = \frac{1}{2k-2}\binom{2k-2}{r-1} \ \textrm{ and } \ \frac{rg(r)}{2}.$$ If the first one is larger, then we get $$ex_r(n,C_{2k}) \le \frac{2}{r(2k-2)}\binom{2k-2}{r-1} \cdot ex(n, C_{2k}).$$ If the second one is larger, we get $$ex_r(n,C_{2k}) \le g(r) \cdot ex(n, C_{2k}).$$ \end{proof}

\begin{rem}
	Let us assume $2k>\lceil r/2\rceil$, take a bipartite graph $G$ of girth more than $2k$ containing $\lfloor n/r\rfloor$ vertices in both color classes and replace each vertex in one part by $\lfloor r/2\rfloor$ copies of it, and each vertex in the other part by $\lceil r/2\rceil$ copies of it. We claim that if the resulting $r$-uniform hypergraph $\cH$ contains a Berge-$C_{2k}$, then $G$ contains a cycle of length at most $2k$, which is impossible. Indeed, consider a Berge cycle of length $2k$, $v_1$,$h_{1}$,$v_2$,$h_{2},\ldots,v_{2k}$,$h_{2k}$,$v_1$ in $\cH$. Then each $v_i$ is a copy of a vertex $u_i$ of $G$. So this Berge cycle corresponds to a closed walk $u_1u_2,\ldots,u_{2k}u_1$ in $G$ of length $2k$. We claim that this closed walk contains a cycle of length at most $2k$. Indeed, otherwise either an edge is repeated in the walk or it consists of only one vertex; we will show both of these cases are impossible: As $2k>\lceil r/2\rceil$, there are two vertices $v_i$ and $v_j$ that are copies of two different vertices $u_i$ and $u_j$ (respectively) of $G$, which means the walk contains at least two different vertices of $G$. Also observe that if an edge is repeated in the closed walk $u_1u_2,\ldots,u_{2k}u_1$, say $u_iu_{i+1} = u_ju_{j+1}$ (addition in the subscripts is modulo $2k$), then we must have $h_i = h_j$ contradicting the definition of a Berge cycle.    
  This gives a lower bound of $$ex_r(n, C_{2k}) \ge ex\left(\left\lfloor\frac{n}{r}\right\rfloor, \left\lfloor\frac{n}{r}\right\rfloor, \{C_3,C_4, \ldots, C_{2k} \}\right).$$
\end{rem}

\subsection{Asymptotics for Berge-$K_{2,t}$ - the linear case} 
\label{linear_case}

First we prove the following upper bound.

\begin{thm}\label{linearberge}
For all $r, t \ge 2$, we have $$ex_r(n, \{C_2, K_{2,t}\}) \le \frac{\sqrt{t-1}}{r(r-1)}n^{3/2} + O(n).$$ 
\end{thm}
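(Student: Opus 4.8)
The plan is to bound the number of hyperedges in a linear, $r$-uniform, Berge-$\{C_2,K_{2,t}\}$-free hypergraph $\cH$ on $n$ vertices by exploiting the linearity to produce an honest bipartite ``incidence-type'' graph (or a shadow-type graph on pairs), on which we can invoke the $K_{2,t}$-type counting of K\H{o}v\'ari--S\'os--Tur\'an. First I would record the structural consequences of the hypotheses: since $\cH$ has no Berge-$C_2$, any two hyperedges share at most one vertex (so $\cH$ is automatically linear — indeed the assumption $C_2$-free \emph{is} linearity in this Berge language), and since $\cH$ has no Berge-$K_{2,t}$, we cannot find two vertices $u,v$ together with $t$ pairwise distinct hyperedges each meeting $\{u,v\}$ in a way that realizes the $t$ paths of $K_{2,t}$. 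The key combinatorial object I would introduce is, for each vertex $v$, its ``link'' data: the set of pairs $\{x,y\}\subseteq h\setminus\{v\}$ for hyperedges $h\ni v$, or more efficiently the auxiliary graph whose edges are certain pairs of vertices covered by the hyperedges through $v$.

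The core counting step: I would count cherries (paths of length two) in a suitable host graph. Consider the bipartite incidence graph between $V(\cH)$ and a carefully chosen set of ``$2$-element traces'' of hyperedges, or alternatively: for each pair of vertices $\{u,v\}$ count the number $d(u,v)$ of hyperedges $h$ with $u,v\in h$. By linearity $d(u,v)\le 1$, which is too weak directly, so instead I would count, for each hyperedge $h$ and each pair $\{u,v\}\subseteq h$, and look at how many hyperedges are ``linked to $h$'' through a common vertex. The right move, I believe, is: pick in each hyperedge $h$ a distinguished vertex or use all $\binom{r}{2}$ pairs, and double count configurations $(\{u,v\}, h_1, h_2, \dots)$ realizing a Berge-$K_{2,t}$; Berge-$K_{2,t}$-freeness says that for fixed $\{u,v\}$ the number of hyperedges that can be used as ``$u$--$w_i$--$v$ paths'' with distinct middle vertices $w_i$ is at most $t-1$. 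Counting $\sum_{v}\binom{d(v)}{2}$-type quantities against this bound, using convexity ($\sum d(v) = r|\cH|$ since each hyperedge has $r$ vertices and each is counted once — here linearity ensures no overcounting of pairs of hyperedges through a vertex beyond what we want), should yield $\binom{|\cH|}{2}$-flavored lower bound on the number of ``conflicting'' configurations, hence $r(r-1)|\cH|\cdot(\text{something}) \le (t-1)\binom{n}{2}\cdot 2$ roughly, giving $|\cH| \le \frac{\sqrt{t-1}}{r(r-1)}n^{3/2}+O(n)$.

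Concretely, I would: (i) for each vertex $v$ and each unordered pair of hyperedges $\{h_1,h_2\}$ both containing $v$, by linearity $h_1\cap h_2 = \{v\}$, so the pair of hyperedges contributes $(r-1)^2$ ordered pairs $(x,y)$ with $x\in h_1\setminus\{v\}$, $y\in h_2\setminus\{v\}$, and the pair $\{x,y\}$ together with $v$ gives a ``Berge-cherry''; (ii) observe that a fixed pair $\{x,y\}$ of vertices can arise as the two endpoints of at most $t-1$ such Berge-cherries with distinct apexes $v$ (else we would have a Berge-$K_{2,t}$ with parts $\{x,y\}$ and $t$ apexes), and for a fixed apex $v$ the map $(h_1,h_2)\mapsto \{x,y\}$ is injective by linearity once we also fix $x,y$; (iii) sum: $\sum_v (r-1)^2\binom{d(v)}{2} \le (t-1)\binom{n}{2}$ essentially, then apply convexity with $\sum_v d(v) = r|\cH|$ to get $(r-1)^2\cdot n\cdot\binom{r|\cH|/n}{2}\lesssim (t-1)n^2/2$, i.e. $(r-1)^2 r^2|\cH|^2/(2n)\lesssim (t-1)n^2/2$, yielding the stated bound after taking square roots and absorbing lower-order terms into $O(n)$. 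The main obstacle I anticipate is making step (ii) airtight — ensuring that ``$t-1$ distinct Berge-cherries on $\{x,y\}$'' genuinely assembles into a Berge-$K_{2,t}$ (the $t$ hyperedges playing the roles of the $t$ paths must be pairwise distinct and the bijection $\phi$ from $E(K_{2,t})$ must exist), which requires that the hyperedges $h_1^{(i)}, h_2^{(i)}$ across different cherries do not collide in a way that blocks the bijection; linearity and a careful choice (e.g. fixing which hyperedge of each cherry plays the $x$-side) should resolve this, but the bookkeeping is where the care is needed, and tracking the exact $O(n)$ error term from the convexity/rounding step is the routine-but-fiddly remainder.
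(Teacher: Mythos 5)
Your overall strategy (count ``cherries'' $x$--$v$--$y$ realized by pairs of hyperedges through an apex $v$, bound the multiplicity of each pair $\{x,y\}$ by $t-1$ via Berge-$K_{2,t}$-freeness, then apply convexity to $\sum_v\binom{d(v)}{2}$ with $\sum_v d(v)=r|\cH|$) is the right shape and would indeed give the stated constant \emph{if} step (ii) were true. But step (ii) is exactly where the proof breaks, and the obstacle you flag there is not a bookkeeping issue that ``linearity and a careful choice'' can fix: the claim that a pair $\{x,y\}$ admits at most $t-1$ apexes is simply false for $r\ge 3$. A Berge-$K_{2,t}$ needs $2t$ \emph{pairwise distinct} hyperedges, one for each edge of $K_{2,t}$, and by linearity the hyperedge containing $\{x,v\}$ is unique; so if one hyperedge $h\ni x$ contains several apexes $v_1,\dots,v_s$, the edges $xv_1,\dots,xv_s$ all compete for the single hyperedge $h$ and no bijection $\phi$ exists. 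Concretely, for $r=3$, $t=2$: take $h=\{x,v_1,v_2\}$ and two further hyperedges $g_1\ni\{y,v_1\}$, $g_2\ni\{y,v_2\}$; the pair $\{x,y\}$ has two apexes, the system is linear, and there is no Berge-$C_4$ on these vertices because $xv_1$ and $xv_2$ can only be covered by $h$. In general the per-pair apex count in a linear Berge-$K_{2,t}$-free hypergraph is only bounded by $O(rt)$ (a greedy/Hall-type argument), and since this bound is multiplied by $\binom{n}{2}$ in your double count, the loss lands in the main term: you would get a constant worse by a factor of order $\sqrt{r}$, not the claimed $\frac{\sqrt{t-1}}{r(r-1)}$. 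To rescue the argument you would have to show that the ``bad'' pairs (those where two apexes share an $x$-hyperedge or a $y$-hyperedge, or a hyperedge serves both sides) are so few that their total contribution is $o(n^2)$ — this needs an auxiliary codegree-type lemma and is a genuine additional argument, not routine absorption into $O(n)$.

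For comparison, the paper's proof is organized precisely so that the $t-1$ multiplicity bound is legitimate. Working locally around a vertex $v$, it first discards the few hyperedges at each $u\in N^{\cH}_1(v)$ that meet $N^{\cH}_1(v)$ in two or more vertices (Claim \ref{small_degreeG1}), and controls the overlaps $V_{u_{p,i}}\cap V_{u_{q,i}}$ via a Brooks'-theorem independent-set trick (Claim \ref{union_set}); only after this pruning is it true that every vertex lies in at most $t-1$ of the sets $V_i$ (Claim \ref{sumofUnionsets}), because the surviving hyperedges each meet $N^{\cH}_1(v)$ in exactly one vertex and are therefore automatically distinct, so $t$ of them really do assemble with the $e^v_j$ into a Berge-$K_{2,t}$. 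The resulting inequality $\sum_{u\in N^{\cH}_1(v)}(r-1)d(u)\le (t-1)n+O_{r,t}(d(v))$ (Lemma \ref{upperboundlinear}) plus Cauchy--Schwarz then plays the role of your convexity step. In short: same counting philosophy, but the distinctness-of-hyperedges problem you correctly identified as the crux is the bulk of the paper's work, and your proposal does not contain an argument for it.
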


\noindent
Note that putting $r=2$ in Theorem \ref{linearberge}, we can recover the upper bound in F\"uredi's theorem - Theorem \ref{K2tgraph}.


\noindent

\noindent

\vspace{1mm}

Our main focus in the rest of this section is to prove lower bounds and determine the asymptotics of the Tur\'an number of Berge-$K_{2,t}$ in  $3$-uniform  linear hypergraphs. Putting $r=3$ in Theorem \ref{linearberge} we get an upper bound of $$(1+o(1))\frac{\sqrt{t-1}}{6}n^{3/2},$$ and putting $r=3$ in Theorem \ref{Timmons_Palmer}  we get a lower bound of 

$$\frac{\sqrt{t-1}}{6\sqrt{3}}n^{3/2} + O(n)$$ for some special $n$ and $t$. First we present a slightly weaker lower bound but its proof is much simpler than that of Theorem \ref{Timmons_Palmer} and it also works for every $n$ and $t$:

\begin{thm}\label{linearlower}
If $n \ge t \ge 2,$ we have 
$$ex_3(n, \{C_2, K_{2,t}\}) \ge \frac{\sqrt{t-1}}{12}n^{3/2} + O(n).$$
\end{thm}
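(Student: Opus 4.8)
The plan is to construct a $3$-uniform linear (i.e.\ Berge-$C_2$-free) hypergraph on $n$ vertices with no Berge-$K_{2,t}$ and with $\frac{\sqrt{t-1}}{12}n^{3/2}+O(n)$ hyperedges, by ``lifting'' a known extremal bipartite graph. Start from a bipartite graph $B$ that is extremal for $K_{2,t}$: by Theorem \ref{biparitieturanK2tgraph} there is a $K_{2,t}$-free bipartite graph on color classes $X$, $Y$ with $|X|=|Y|=m$ having $\sqrt{t-1}\,m^{3/2}(1+o(1))$ edges. The idea is to place $X$ on one side, $Y$ on the other, introduce a set $Z$ of ``apex'' vertices, and turn each edge $xy\in E(B)$ into a triple $\{x,y,z\}$ for a suitably chosen $z\in Z$, so that the resulting hypergraph is linear and the trace of the triples on $X\cup Y$ recovers exactly $B$. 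Concretely I would take $|Z|$ proportional to $n$, group the edges of $B$ so that edges sharing a vertex get distinct apexes, and reuse each apex for a matching's worth of edges; a greedy/counting argument (each vertex of $B$ has degree $O(\sqrt{m})$, so a proper-edge-coloring-type assignment works) gives that we can realize all $\approx \sqrt{t-1}\,m^{3/2}$ edges as linear triples provided $|X|=|Y|=|Z|=n/3$, i.e.\ $m=n/3$. Substituting $m=n/3$ gives $\sqrt{t-1}(n/3)^{3/2}(1+o(1)) = \frac{\sqrt{t-1}}{3\sqrt3}n^{3/2}(1+o(1))$; the constant $1/(3\sqrt3)\approx 0.192$ already beats $1/12\approx 0.083$, so there is plenty of slack, and a cruder but simpler apex-assignment (e.g.\ using $|Z|=n/2$ and $|X|=|Y|=n/4$, or just bounding losses generously) still clears the $1/12$ bar. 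I would pick whichever assignment keeps the verification cleanest.

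The key steps, in order: (1) invoke Theorem \ref{biparitieturanK2tgraph} to get the bipartite host graph $B$ on $m+m$ vertices with $\sqrt{t-1}\,m^{3/2}(1+o(1))$ edges, choosing $m=\Theta(n)$; (2) describe the vertex set $V(\cH)=X\sqcup Y\sqcup Z$ and the edge-to-triple map $xy\mapsto\{x,y,\sigma(xy)\}$; (3) construct $\sigma:E(B)\to Z$ so that $\cH$ is linear — this reduces to: any two triples meet in at most one point, which (since two triples already meeting in two points of $X\cup Y$ is impossible as $B$ is simple) is equivalent to: if $\sigma(e)=\sigma(e')$ then $e\cap e'=\emptyset$ in $X\cup Y$. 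So $\sigma$ is precisely a proper edge coloring of $B$ with color set $Z$, and since $\Delta(B)=O(\sqrt m)=O(\sqrt n)$ while $|Z|=\Theta(n)$, such a $\sigma$ exists trivially (Vizing, or even greedy); (4) verify no Berge-$K_{2,t}$: suppose $\cH$ contained one, with ``branch vertices'' $a,b$ (the degree-$t$ side) and $c_1,\dots,c_t$ (the degree-$2$ side), and $2t$ distinct hyperedges $h_{a,i}\ni a,c_i$ and $h_{b,i}\ni b,c_i$. I would argue that after projecting to $X\cup Y$ this forces $t$ internally-disjoint-enough paths of length $2$ between the images of $a$ and $b$ through the images of the $c_i$, contradicting that $B$ is $K_{2,t}$-free — the only subtlety is that a $c_i$ or an $a,b$ could be an apex vertex of $Z$, which must be handled; (5) count: $|E(\cH)|=|E(B)|=\sqrt{t-1}\,m^{3/2}(1+o(1))$, and with $m\ge n/4$ (say) this is $\ge \frac{\sqrt{t-1}}{8}n^{3/2}(1+o(1))\ge\frac{\sqrt{t-1}}{12}n^{3/2}+O(n)$.

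The main obstacle I anticipate is step (4): ensuring that a hypothetical Berge-$K_{2,t}$ in $\cH$ genuinely projects to a $K_{2,t}$ (or something forbidden) in $B$. The difficulty is bookkeeping which of the $2+t$ special vertices of the Berge-$K_{2,t}$ lie in $Z$ versus in $X\cup Y$, because an apex vertex $z\in Z$ has the ``wrong'' kind of neighborhood (it only sees $X\cup Y$ pairs, not other apexes), and a path of length $2$ in the projection might degenerate if two of its three vertices coincide after projection — but they can't, since each triple $\{x,y,z\}$ has its two base vertices in distinct classes $X$, $Y$ and a distinct apex. Because $1/(3\sqrt3)$ (or $1/8$) is comfortably larger than $1/12$, I can afford to be lossy: e.g.\ if some cases of the Berge-$K_{2,t}$ analysis are awkward, I can instead delete a few hyperedges to clean up the structure, at a cost absorbed into the $O(n)$ term. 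I would present the argument with the simplest apex assignment that still makes the projection argument transparent, deferring the sharper constant $\tfrac{1}{3\sqrt3}$ (which anticipates Theorem \ref{linearlower2}) to later.
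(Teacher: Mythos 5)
Your construction, as specified, cannot work, and the obstruction is exactly the case you set aside as a ``subtlety'': a Berge-$K_{2,t}$ whose two branch vertices are \emph{both} apex vertices of $Z$. If $a=z_1$ and $b=z_2$ are apexes, the hyperedges through $z_1$ (resp.\ $z_2$) correspond to the color class $M_1$ (resp.\ $M_2$) of your proper edge coloring $\sigma$, which is a matching in $B$. Any $t$ vertices of $X\cup Y$ that are covered by both $M_1$ and $M_2$, chosen so that their covering edges in $M_1$ are distinct and likewise in $M_2$ (e.g.\ every other internal vertex of a path component of $M_1\cup M_2$), serve as $c_1,\dots,c_t$ and yield $2t$ distinct hyperedges forming a Berge-$K_{2,t}$ --- and $M_1\cup M_2$ being two matchings is perfectly consistent with $B$ being $K_{2,t}$-free, so the projection argument gives no contradiction. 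Quantitatively this case is unavoidable at your parameters: with $|Z|=\Theta(n)$ apexes and $\sqrt{t-1}\,m^{3/2}$ edges, the average number of vertices covered by two given color classes is $\sum_v\binom{d_B(v)}{2}/\binom{|Z|}{2}\approx 2(t-1)\ge t$, and shrinking the color classes to size $<t$ forces $|Z|=\Omega(m^{3/2}/\sqrt{t})\gg n$, which collapses the edge count to $O(tn)$. In fact there is a decisive sanity check you missed: your primary choice $m=n/3$ would give $\frac{1}{3\sqrt 3}\sqrt{t-1}\,n^{3/2}\approx 0.192\,\sqrt{t-1}\,n^{3/2}$ hyperedges, which \emph{exceeds} the upper bound $\frac{\sqrt{t-1}}{6}n^{3/2}+O(n)$ of Theorem \ref{linearberge}; so that hypergraph provably contains a Berge-$C_2$ or a Berge-$K_{2,t}$. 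The fallback $m=n/4$ is not ruled out by the upper bound, but the same structural defect remains and no mechanism in your plan addresses it.

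For comparison, the paper avoids apex vertices entirely: it takes F\"uredi's $K_{2,t}$-free graph $G$ (not its bipartite variant), whose hyperedge candidates are the $\frac16(t-1)^{3/2}n^{3/2}(1+o(1))$ \emph{triangles} of $G$, and then selects a pairwise edge-disjoint subfamily of triangles --- edge-disjointness gives linearity, and Berge-$K_{2,t}$-freeness is automatic because every hyperedge is a triangle of the $K_{2,t}$-free graph $G$ (the first inequality of Theorem \ref{gp2}). The selection is an independent set in the graph $G'$ on the triangles of $G$ where two triangles are adjacent if they share an edge; since $\Delta(G')\le 3t-6$ and $\omega(G')=t-1$, Fajtlowicz's bound $\alpha(G')\ge 2|V(G')|/(\Delta+\omega+1)$ loses only a factor $\frac{2}{4t-6}\ge\frac{1}{2(t-1)}$, giving $\frac{\sqrt{t-1}}{12}n^{3/2}$. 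If you want to salvage a construction in the spirit of yours, you would need to control, for every pair of apexes, the number of commonly covered base vertices --- which is essentially a new combinatorial design problem not resolved by a greedy or Vizing coloring.
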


\noindent
However, when $t$ is large enough, we can do much better. In Theorem \ref{linearlower2} below, we prove a lower bound of $$(1+o_t(1))\frac{\sqrt{t-1}}{6}n^{3/2},$$ where $o_t$ depends on $t$. Thus, it shows that for large enough $t$ our upper bound in Theorem \ref{linearberge} is close to being asymptotically correct for $r=3$.

\begin{thm}\label{linearlower2} 
There is an absolute constant $c$ such that for any $t \ge 2$, we have,
$$ex_3(n, \{C_2, K_{2,t}\}) \ge \left(1-\frac{c}{\sqrt{t-1}}\ln^{3/2}(t-1)\right)\frac{\sqrt{t-1}}{6}n^{3/2}.$$
\end{thm}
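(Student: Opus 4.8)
The plan is to construct, for each $t$, a linear $3$-uniform hypergraph on $n$ vertices that is Berge-$\{C_2,K_{2,t}\}$-free and has roughly $\frac{1}{6}\sqrt{t-1}\,n^{3/2}$ edges, with the error term controlled by $t$. The natural starting point is a dense $K_{2,t}$-free graph $G$ on the vertex set $[n]$ which moreover has many triangles: F\"uredi's extremal construction from Theorem \ref{K2tgraph}, whose triangle count is $\frac{1}{6}(t-1)^{3/2}n^{3/2}(1+o(1))$ by Theorem \ref{ask2t}. If we simply replace each triangle of $G$ by a hyperedge we get a Berge-$K_{2,t}$-free hypergraph (as in the proof of Corollary \ref{maincor}), but this hypergraph is very far from linear: two triangles of $G$ sharing an edge give two hyperedges sharing two vertices, i.e. a Berge-$C_2$. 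So the real content is to extract from the triangles of $G$ a large \emph{linear} subfamily — a set of triangles pairwise intersecting in at most one vertex — that still has $(1-o_t(1))$ fraction of the total. Equivalently, in the ``triangle-edge incidence'' structure we must pick an independent set of triangles where two triangles are ``adjacent'' if they share an edge of $G$.

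The key steps, in order, are: (1) Recall/record the relevant properties of F\"uredi's graph $G$: it is $d$-regular (or almost regular) with $d \approx \sqrt{t-1}\sqrt{n}$, it is $K_{2,t}$-free, and crucially every edge of $G$ lies in at most $t-1$ triangles (this follows from $K_{2,t}$-freeness: the common neighborhood of the two endpoints of an edge has size $\le t-1$). (2) Hence the ``triangle conflict graph'' $\mathcal{T}$, whose vertices are the triangles of $G$ and whose edges join triangles sharing a $G$-edge, has maximum degree at most $3(t-2)$ — each triangle has $3$ edges, each shared with at most $t-2$ other triangles. A greedy/independent-set bound would only give a $\frac{1}{3(t-1)}$ fraction, which is useless. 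Instead we (3) take a \emph{random} sub-hypergraph: include each triangle of $G$ independently with probability $p$, then delete one hyperedge from each surviving Berge-$C_2$ (each pair of conflicting triangles). The expected number of surviving triangles is $pN$ where $N = N(K_3,G)$, and the expected number of conflicting pairs is at most $p^2 \cdot \frac{1}{2}N\cdot 3(t-1) \le \tfrac{3}{2}p^2 N t$; choosing $p$ of order $1/(t\cdot\text{something})$ loses too much, so instead one should (3$'$) be smarter: partition or random-color the triangles so that within a color class conflicts are rare, or directly bound the number of edges of $G$ that are ``heavy'' (lie in many triangles) and argue most triangles use only ``light'' edges. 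The cleanest route is likely a deletion argument combined with the observation that the number of conflicting pairs, $\sum_{e\in E(G)}\binom{|N(e)|}{2} \le \sum_e \binom{t-1}{2}$, is at most $\binom{t-1}{2}|E(G)| \approx \binom{t-1}{2}\cdot\frac{d n}{2} \approx \frac{(t-1)^{2}}{2}\cdot\frac{\sqrt{t-1}\,n^{3/2}}{2}$, which is only a $\Theta(\sqrt{t-1})$ fraction of the $\frac16(t-1)^{3/2}n^{3/2}$ triangles — so naive deletion kills a constant fraction, not a $o_t(1)$ fraction. This is the genuine obstacle.

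To beat the obstacle I would (4) refine the construction rather than the extraction: instead of F\"uredi's graph, build $G$ from a \emph{generalized quadrangle} or a polarity/affine-plane construction (as Timmons does in Theorem \ref{Timmons_Palmer}) where the $t-1$ common neighbors of an adjacent pair are themselves spread out, so that the triangles through any fixed edge are pairwise edge-disjoint apart from that edge — then conflicting triangles only conflict in one way and a random $2$-coloring of triangles (or a random choice of one triangle per conflict-clique) recovers a $(1-O(\ln^{3/2}(t-1)/\sqrt{t-1}))$ fraction. The $\ln^{3/2}(t-1)$ factor strongly suggests a union-bound / second-moment argument over $\approx t$ ``directions'': colour each triangle by the edge through which it is most at risk, partition into $\approx t$ classes, keep the largest class and clean it up; alternatively, assign to each triangle a uniformly random element of a set of size $\Theta(t/\ln(t-1))$ and show w.h.p. each class is linear, losing only the $\ln^{3/2}$ factor. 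The main difficulty — and where I would spend the most care — is precisely this: showing that one can discard only a $\tilde O(1/\sqrt{t-1})$ fraction of the triangles, rather than a constant fraction, to achieve linearity; this requires exploiting fine structural features of the base graph (how the $t-1$ triangles through an edge interact pairwise), not just its edge count and $K_{2,t}$-freeness. Everything else — counting triangles in the base graph, verifying Berge-$K_{2,t}$-freeness is preserved under taking subfamilies, and bookkeeping the $n$ vs. $n(1+o(1))$ vertex counts — is routine.
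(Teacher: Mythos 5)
Your setup matches the paper's: take F\"uredi's $K_{2,t}$-free graph $G$, use its $\frac16(t-1)^{3/2}n^{3/2}(1+o(1))$ triangles, and try to extract a subfamily of triangles pairwise sharing no edge, so that replacing these by hyperedges gives a linear Berge-$K_{2,t}$-free hypergraph. You also correctly diagnose that greedy independent-set bounds and naive probabilistic deletion only recover a constant (or worse) fraction of the triangles. But at exactly that point the proposal stops being a proof: the step that would discard only an $O(\ln^{3/2}(t-1)/\sqrt{t-1})$ fraction is left as a list of speculative alternatives (replacing F\"uredi's graph by a generalized quadrangle, randomly colouring triangles into $\Theta(t/\ln(t-1))$ classes, a ``union bound over $t$ directions''), none of which is carried out or shown to work. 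In particular, abandoning F\"uredi's construction would forfeit the triangle count that produces the main term, and keeping one of $t/\ln t$ random classes cannot by itself return a $(1-o_t(1))$ fraction of \emph{all} triangles.

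The missing idea is that the extraction problem is exactly a nearly-perfect-matching problem in a linear regular uniform hypergraph, solvable by the semi-random (nibble) method. Form the hypergraph $\cH$ whose vertices are the triangles of $G$ and whose hyperedges are, for each edge $e$ of $G$, the set of triangles containing $e$; since two triangles share at most one edge and two edges of $G$ lie in at most one common triangle, $\cH$ is linear and $3$-regular, with almost all hyperedges of size $t-1$. Its dual $\cH'$ is linear, $3$-uniform and (after adding $o(n^{3/2})$ dummy vertices) $(t-1)$-regular on $N=\frac12\sqrt{t-1}\,n^{3/2}(1+o(1))$ vertices, and the Alon--Kim--Spencer theorem \cite{AKS1997} provides a matching covering all but $O\left(N\ln^{3/2}(t-1)/\sqrt{t-1}\right)$ vertices. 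Such a matching is precisely a family of $\frac{N}{3}\left(1-O\left(\ln^{3/2}(t-1)/\sqrt{t-1}\right)\right)$ triangles no two of which share an edge, which yields the stated bound; this theorem is also the sole source of the $\ln^{3/2}(t-1)$ factor. No fine structure of F\"uredi's graph beyond its edge count, the number of triangles per edge, and $K_{2,t}$-freeness is needed, so the extra work you anticipated in step (4) is unnecessary once the right black box is invoked.
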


Note that for $t=2$, this gives a lower bound matching the upper bound in Theorem \ref{linearberge} for $r=3$. Therefore, we can recover a sharp result of Ergemlidze, Gy\H{o}ri and Methuku in \cite{BEM}, showing $ex_3(n, \{C_2,C_4\}) = (1+o(1))n^{3/2}/6$.  

\section{Proofs of the results about $r$-uniform Berge-$F$-free hypergraphs}
\label{BergeF}

\subsection{Proof of Theorem \ref{main}}




Let us introduce some notation. Let $\cH$ be an $r$-uniform Berge-$F$-free hypergraph. We call a pair of vertices $u,v$ a \textit{blue edge} if it is contained in at least one and at most $\binom{r}{2}-2$ hyperedges in $\cH$ and a \textit{red edge} if it is contained in more than $\binom{r}{2}-2$ hyperedges. 



We call a hyperedge \textit{blue} if it contains a blue edge, and \textit{red} otherwise. 
Let us denote the set of blue edges by $S$ and the number of blue hyperedges by $s$.
We choose a largest subset $S'\subset S$ with the property that every blue hyperedge contains at most one edge of $S'$.  

\begin{clm}\label{blue} $|S'|\ge s/f(r)$.

\end{clm}

\begin{proof} 
We build an auxiliary bipartite graph $A$ with parts $P$ and $Q$ where $P$ consists of all the blue edges and $Q$ consists of all the blue hyperedges of $\cH$, and we connect a vertex of $P$ with a vertex of $Q$ if the corresponding blue edge is contained in the corresponding blue hyperedge. 

By definition, $S'$ is the largest subset of $P$ such that any two vertices of $S'$ are at distance more than two in the graph $A$. We claim that every vertex of $Q$ is at distance at most three from a vertex of $S'$. Indeed, otherwise any of its neighbors can be added to $S'$.

Now we show that the number of vertices in $Q$ that are at distance at most three from a vertex of $S'$ is at most $|S'|f(r)$. Indeed, a blue edge is contained in at most $(\binom{r}{2}-2)$ blue hyperedges, and they each contain at most $\binom{r}{2}-1$ other blue edges;  those blue edges are in turn contained in at most $\binom{r}{2}-3$ other blue hyperedges. So a vertex of $S'$ is at distance at most three from at most $(\binom{r}{2}-2)(1+(\binom{r}{2}-1)(\binom{r}{2}-3)) = f(r)$ vertices in $Q$. Since we must have $|Q| = s \le |S'|f(r)$, the proof is complete.


\end{proof}

Now consider an auxiliary graph $G$, consisting only of the blue edges of $S'$ and all the red edges.
Let us assume there is a copy of $F$ in $G$. We build an auxiliary bipartite graph $B$. One of its classes $B_1$ consists of the edges of that copy of $F$, and the other class $B_2$ consists of the hyperedges of $\cH$ that contain them. We connect a vertex of $B_1$ with a vertex of $B_2$ if the corresponding hyperedge of $\cH$ contains the corresponding edge of $F$. Note that every hyperedge can contain at most $\binom{r}{2}-1$ edges from a copy of $F$ (since $F$ is $K_r$-free), thus vertices of $B_2$ have degree at most $\binom{r}{2}-1$.

Notice that a matching in $B$ covering $B_1$ would give a Berge-$F$ in $\cH$. Thus by Hall's theorem there is a subset $X\subset B_1 = E(F)$ with $|N(X)|< |X|$ for any copy of $F$ in $G$. We call such a subset $X$ \textit{bad}. 

\begin{clm}\label{redg2} There is a blue edge in every copy of $F$ in $G$.
\end{clm}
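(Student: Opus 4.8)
\textbf{Proof plan for Claim \ref{redg2}.}

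The plan is to argue by contradiction: suppose some copy of $F$ in $G$ consists entirely of red edges. Recall that a red edge is, by definition, contained in more than $\binom{r}{2}-2$ hyperedges of $\cH$, i.e.\ in at least $\binom{r}{2}-1$ hyperedges. The goal is to show that under this hypothesis one can find a system of distinct representatives for the edges of this copy of $F$, i.e.\ a Berge-$F$ in $\cH$, contradicting Berge-$F$-freeness. In the notation set up just before the claim, this amounts to showing that when $B_1$ consists only of red edges, the bipartite graph $B$ satisfies Hall's condition, so there is no bad set $X$.

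The key step is a counting argument inside $B$. Fix any subset $X \subseteq B_1$ of red edges. Each vertex of $X$ (a red edge) has degree at least $\binom{r}{2}-1$ in $B$, so the number of edges of $B$ incident to $X$ is at least $(\binom{r}{2}-1)|X|$. On the other hand, each vertex of $B_2$ (a hyperedge of $\cH$) has degree at most $\binom{r}{2}-1$ in $B$, since $F$ is $K_r$-free and so a single hyperedge, which spans $\binom{r}{2}$ pairs, can contain at most $\binom{r}{2}-1$ edges of the copy of $F$. Hence the edges incident to $X$ are spread over at least $\frac{(\binom{r}{2}-1)|X|}{\binom{r}{2}-1} = |X|$ vertices of $B_2$, giving $|N(X)| \ge |X|$. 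Thus no bad set exists, Hall's condition holds, and we extract a matching covering $B_1$, yielding a Berge-$F$ — a contradiction. Therefore every copy of $F$ in $G$ contains at least one blue edge.

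One small point to handle carefully: the inequality $|N(X)|\ge|X|$ from the degree count relies on every vertex of $X$ having degree \emph{at least} $\binom{r}{2}-1$ and every vertex of $B_2$ having degree \emph{at most} $\binom{r}{2}-1$ — the two bounds must match exactly for the division to work, which is precisely why the threshold in the definition of a red edge was chosen to be $\binom{r}{2}-2$. I would make sure to state explicitly that red edges lie in at least $\binom{r}{2}-1$ hyperedges (strict inequality with an integer count) and that the $K_r$-freeness of $F$ forbids a hyperedge from containing a full $K_r$'s worth of $F$-edges. The only real obstacle is bookkeeping with these off-by-one thresholds; there is no deeper difficulty, since the structure of the argument is a direct application of Hall's theorem via a double-counting of edge-slots.
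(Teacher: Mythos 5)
Your proof is correct and follows essentially the same route as the paper: assuming an all-red copy of $F$, you double-count edges of the auxiliary bipartite graph $B$ between a would-be bad set $X$ and $N(X)$, using that red edges have degree at least $\binom{r}{2}-1$ while hyperedges have degree at most $\binom{r}{2}-1$ by $K_r$-freeness of $F$, to conclude $|N(X)|\ge|X|$ and invoke Hall's theorem. The paper's proof is the same argument, phrased as a direct contradiction with the existence of a bad set.
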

\begin{proof} Otherwise we can find a bad set $X\subset B_1$ such that every element in it is a red edge of $G$, thus the corresponding vertices have degree at least $\binom{r}{2}-1$ in $B$. On the other hand since every vertex of $N(X)$ is in $B_2$, they have degree at most $\binom{r}{2}-1$ in $B$. This implies $|N(X)| \ge |X|$, contradicting the assumption that $X$ is a bad set.
\end{proof}

The following claim shows that every bad subset of edges in a copy of $F$ in $G$ contains a red edge which is contained in few red hyperedges. Our plan will be to recolor such a red edge of a bad set in each copy of $F$ in $G$ to green, to make sure that every copy of $F$ in $G$ contains a green edge.
%
%
%
\begin{clm}\label{clmbad} Every bad set $X$ (in any copy of $F$ in $G$) contains a red edge that is contained in at most $\binom{r}{2}-2$ red hyperedges.

\end{clm}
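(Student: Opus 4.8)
The plan is to argue by contradiction, playing the hypothesis $|N(X)|<|X|$ off against a double count of edge--hyperedge incidences, in the spirit of the proof of Claim~\ref{redg2}. So I would assume that \emph{every} red edge of $X$ is contained in at least $\binom r2-1$ \textbf{red} hyperedges, split $X$ into its set $X_b$ of blue edges and its set $X_r$ of red edges, and write $b=|X_b|$, so that $|X_r|=|X|-b$.

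First I would pin down the contribution of $X_b$ to $N(X)$. Since $G$ contains, among its blue edges, only those of $S'$, each $e\in X_b$ lies in $S'$; since no blue hyperedge contains two members of $S'$ and a red hyperedge contains no blue edge at all, the (nonempty) families of hyperedges covering the individual edges of $X_b$ are pairwise disjoint and consist entirely of blue hyperedges. Hence $N(X)$ already contains at least $b$ blue hyperedges. Next I would bound the contribution of $X_r$: consider the bipartite incidence graph between $X_r$ and the red hyperedges meeting $X_r$. By the standing assumption every vertex of $X_r$ has degree at least $\binom r2-1$ there, while (because $F$ is $K_r$-free) every hyperedge of $\cH$, having only $r$ vertices, contains at most $\binom r2-1$ edges of our fixed copy of $F$, hence has degree at most $\binom r2-1$. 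Comparing the two incidence counts yields at least $|X_r|$ such red hyperedges, and all of them lie in $N(X)$.

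Finally, the red hyperedges found in the second step are distinct from the $b$ blue ones found in the first, so $|N(X)|\ge b+|X_r|=|X|$, contradicting the badness of $X$; this also shows that $X_r\ne\emptyset$, since if $X=X_b$ the first count alone would already give $|N(X)|\ge|X|$. Therefore $X$ must contain a red edge lying in at most $\binom r2-2$ red hyperedges. The only point needing care --- and where the argument genuinely goes beyond Claim~\ref{redg2} --- is keeping the blue and red hyperedges inside $N(X)$ separate: this is exactly why one invokes that the edges of $X_b$ lie in $S'$ and that red hyperedges avoid blue edges, so that the two incidence counts cannot overlap.
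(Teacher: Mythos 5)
Your proof is correct and is essentially the paper's argument in contrapositive form: the paper directly pigeonholes the at most $\left(\binom{r}{2}-1\right)x$ incidences between red edges of $X$ and the $x$ red hyperedges of $N(X)$ (having first noted, exactly as you do, that the blue edges of $X$ inject via $S'$ into distinct blue hyperedges of $N(X)$, so $X$ has at least $x+1$ red edges), whereas you assume every red edge has red-degree at least $\binom{r}{2}-1$ and run the same incidence count to contradict $|N(X)|<|X|$. Both versions rest on the same two facts, so this is the same proof, just organized as a contradiction rather than a direct pigeonhole.
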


\begin{proof} Let us assume indirectly that every red edge of $G$ is contained in at least $\binom{r}{2}-1$ red hyperedges. Let $x$ be the number of red hyperedges in $N(X)$ and $y$ be the number of blue hyperedges. Then the number of blue edges in $X$ is at most $y$ (since $S'$ has the property that every blue hyperedge contains at most one pair of $S'$). Since $|N(X)| < |X|$, this implies the number of red edges in $X$ is more than $x$, hence the number of edges in $B$ between the red edges in $X$ and red hyperedges is at least $(x + 1)(\binom{r}{2}-1)$. However, a hyperedge in $B_2$ has at most $\binom{r}{2}-1$ neighbors in $B_1$ and so there can be at most $x(\binom{r}{2}-1)$ edges in $B$ between red hyperedges and red edges in $X$, a contradiction.
\end{proof}

We will call a red edge (in each copy of $F$ in $G$) guaranteed by the above claim, \emph{special red edge}. As every copy of $F$ in $G$ contains a bad set, it contains a special red edge too.

We consider each copy of $F$, one by one, and recolor a special red edge in it to \textit{green}, and also recolor all the red hyperedges containing it to \textit{green}. Note that it is possible that some of the red hyperedges containing a special red edge of $G$ may have already turned green. However, after this procedure, the total number of green hyperedges of $\cH$ is obviously at most $\binom{r}{2}-2$ times the number of green edges of $G$. Notice that each remaining red hyperedge still contains $\binom{r}{2}$ red edges of $G$.

Let us now recolor the remaining red edges of $G$ and red hyperedges of $\cH$ to \textit{purple} to avoid confusion. Thus $G$ now contains blue, green and purple edges, while $\cH$ contains blue, green and purple hyperedges. Every blue hyperedge of $\cH$ contains at most one blue edge of $G$, and every green hyperedge of $\cH$ contains a green edge of $G$ (possibly more than one), while a purple hyperedge contains $\binom{r}{2}$ purple edges of $G$ (i.e., every pair contained in a purple hyperedge is a purple edge of $G$). 

Furthermore, let $G_1$ be the subgraph of $G$ consisting of blue and purple edges, and let $G_2$ be the subgraph of $G$ consisting of green and purple edges. Clearly $G_1$ is $F$-free because every copy of $F$ in $G$ contains a green edge. We claim $G_2$ is also $F$-free -- indeed, notice that we recolored only red edges to green or purple, so the edges in $G_2$ were all originally red. Therefore, by Claim \ref{redg2}, $G_2$ cannot contain a copy of $F$.

\begin{clm}
\label{based_on_x}
If an $F$-free graph $G$ contains $x$ edges, then it contains at most $$\min\left\{\frac{2cxn^{i-1}}{r}, cx(r-1) \left(\frac{2ex(n,F)}{n}\right)^{i-1}\right\}$$ copies of $K_r$.

\end{clm}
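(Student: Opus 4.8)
The plan for Claim~\ref{based_on_x} is to count copies of $K_r$ by a double count over vertices that reduces everything to counting $K_{r-1}$'s inside vertex-neighborhoods, where the hypothesis $ex(n,K_{r-1},F')\le cn^{i}$ applies. The first thing I would establish is the key structural observation: \emph{for every vertex $v$ of $G$, the induced subgraph $G[N(v)]$ is $F'$-free}. Indeed, $F$ is obtained from $F'$ by adding back the deleted vertex $u_{0}$ together with its edges to $N_F(u_0)$; so if some copy of $F'$ sat inside $N(v)$, then — since in $G$ the vertex $v$ is adjacent to every vertex of $N(v)$ — sending $u_{0}$ to $v$ and extending by that copy of $F'$ would exhibit $F$ as a subgraph of $G$, contradicting $F$-freeness. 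Hence $N(K_{r-1},G[N(v)])\le ex(d(v),K_{r-1},F')\le c\,d(v)^{i}$.

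Next I would use the identity $r\cdot N(K_{r},G)=\sum_{v\in V(G)}N(K_{r-1},G[N(v)])$, which holds because each $K_{r}$ is counted exactly once at each of its $r$ vertices, and conversely a $K_{r-1}$ lying inside $N(v)$ together with $v$ forms a $K_{r}$. Combined with the estimate above this yields $r\cdot N(K_{r},G)\le c\sum_{v}d(v)^{i}$. The first of the two claimed bounds then follows immediately: using $d(v)\le n$ we get $\sum_{v}d(v)^{i}\le n^{i-1}\sum_{v}d(v)=2x\,n^{i-1}$, hence $N(K_{r},G)\le 2cx\,n^{i-1}/r$.

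For the second bound the estimate $d(v)\le n$ is far too wasteful when $F$-free graphs are sparse, so here I would additionally exploit that $G[N(v)]$ is not merely $F'$-free but $F$-free, and therefore has at most $ex(d(v),F)\le ex(n,F)$ edges; together with $\sum_{v}d(v)=2x\le 2\,ex(n,F)$, this is what should produce the factor $(2\,ex(n,F)/n)^{i-1}$ in place of $n^{i-1}$. For the vertices $v$ with $d(v)\le 2\,ex(n,F)/n$ one has $d(v)^{i}\le d(v)\,(2\,ex(n,F)/n)^{i-1}$ directly, and their total contribution to $\sum_v d(v)^i$ is at most $2x\,(2\,ex(n,F)/n)^{i-1}$. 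For the remaining high-degree vertices, $d(v)\le n$ is useless and one must instead bound $N(K_{r-1},G[N(v)])$ via the sparsity of $G[N(v)]$ — for instance by re-running the same reduction one level down, or by interpolating the two inequalities $N(K_{r-1},G[N(v)])\le c\,d(v)^{i}$ and $|E(G[N(v)])|\le ex(n,F)$ — and then summing using the Markov-type bound $|\{v: d(v)>\Delta\}|\le 2x/\Delta$. I expect this last step — bringing the high-degree vertices under control with exactly the right power of $ex(n,F)/n$, which is precisely the place where $G$ being globally $F$-free (and not just its neighborhoods) must be used — to be the main obstacle; everything up to it is routine once the neighborhood lemma is in hand.
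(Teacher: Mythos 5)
Your first bound is correct and is exactly the paper's argument: the neighborhood of each vertex is $F'$-free, so $v$ lies in at most $c\,d(v)^i$ copies of $K_r$, and summing with $d(v)\le n$ and $\sum_v d(v)=2x$ gives $\frac{2cxn^{i-1}}{r}$.

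For the second bound, however, there is a genuine gap, and you flag it yourself: the high-degree vertices are not "the main obstacle" of an otherwise complete argument, they are the entire content of the claim, and the routes you gesture at (interpolating the two neighborhood estimates, or a Markov/dyadic count of vertices with $d(v)>\Delta$) do not close it. A dyadic summation gives, for each scale $\Delta$, at most $\frac{2x}{\Delta}$ vertices each contributing up to $c(2\Delta)^i$, i.e.\ about $cx\Delta^{i-1}$ per scale; summing over scales up to $n$ just reproduces $O(cxn^{i-1})$, with no gain from $F$-freeness. The point is that a global sum over vertices cannot yield a bound of the form $cx(r-1)\left(\frac{2\,ex(n,F)}{n}\right)^{i-1}$, because it cannot rule out all the $K_r$'s concentrating around a few vertices of very large degree. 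The paper's proof avoids summing altogether: letting $a$ denote the number of $K_r$'s, it repeatedly deletes any edge lying in fewer than $a/x$ copies of $K_r$; since at most $x$ edges are deleted, fewer than $a$ copies are destroyed, so a nonempty subgraph survives in which \emph{every} edge lies in at least $a/x$ copies of $K_r$. After discarding isolated vertices this subgraph is $F$-free on $n'\le n$ vertices, hence has a vertex $v$ of degree $d(v)\le \frac{2\,ex(n,F)}{n}$; counting $K_{r-1}$'s in $N(v)$ two ways gives $\frac{a\,d(v)}{x(r-1)}\le c\,d(v)^{i}$, i.e.\ $a\le cx(r-1)d(v)^{i-1}$, and the degree bound finishes the proof. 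This min-degree localization after edge cleaning is the idea missing from your sketch; without it (or an equivalent substitute) the second bound is not proved.
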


\begin{proof} Obviously the neighborhood of every vertex is $F'$-free. An $F'$-free graph on $d(v)$ vertices contains at most $$ex(d(v),K_{r-1},F')\le cd(v)^{i}$$ copies of $K_{r-1}$ by the definition of $c$. It means $v$ is in at most that many copies of $K_r$, so summing up for every vertex $v$, every $K_r$ is counted $r$ times.
On the other hand as $\sum_{v\in V(G)} d(v)=2x$ and $d(v)\le n$ we have $$\sum_{v\in V(G)} cd(v)^{i} \le \sum_{v\in V(G)} cn^{i-1} d(v) = 2cxn^{i-1},$$ 
showing that the number of copies of $K_r$ in $G$ is at most $$\frac{2cxn^{i-1}}{r}.$$

\noindent
Now we show that the number of copies of $K_r$ is also at most $$cx(r-1) \left( \frac{2ex(n,F)}{n} \right)^{i-1}.$$ Let $a$ be the number of the copies of $K_r$ in $G$. Let us consider an edge that is contained in less than $a/x$ copies of $K_r$, and delete it. We repeat this as long as there exists such an edge. Altogether we deleted at most $x$ edges, hence we deleted less than $a$ copies of $K_r$. So the resulting graph $G_1$ is non-empty. Let us delete the isolated vertices of $G_1$. The resulting graph $G_2$ is $F$-free on, say, $n' < n$ vertices, hence it contains at most $ex(n',F)$ edges. This implies $G_2$ contains a vertex $v$ with degree $$d(v)\le \frac{2ex(n',F)}{n'}\le \frac{2ex(n,F)}{n}.$$ 

\noindent
Let us consider the number of copies of $K_{r-1}$ in the neighborhood of $v$ in $G_2$. On one hand it is at most $$ex(d(v),K_{r-1},F')\le cd(v)^{i}.$$ On the other hand, it is equal to the number of copies of $K_r$ that contain $v$, which is at least $$\frac{ad(v)}{x(r-1)}.$$ Indeed, the $d(v)$ edges incident to $v$ are all contained in at least $a/x$ copies of $K_r$, and such copies are counted $r-1$ times. So combining, we get $$a\le cx(r-1)d(v)^{i-1}\le cx(r-1) \left(\frac{2ex(n,F)}{n}\right)^{i-1},$$ completing the proof of the claim.
\end{proof}

Now we continue the proof of Theorem \ref{main}. Let $x$ be the number of purple edges in $G$.
Then, by Claim \ref{based_on_x}, the number of copies of $K_r$ consisting of purple edges in $G$ is at most $$\frac{2cxn^{i-1}}{r},$$ but then the number of purple hyperedges in $\cH$ is also at most this number because any pair contained in a purple hyperedge forms a purple edge in $G$.

Let $y:=ex(n,F)$. Then the number of blue edges in $G$ is at most $y-x$ as $G_1$ is $F$-free, and similarly, the number of green edges in $G$ is at most $y-x$. 

By Claim \ref{blue} the total number of blue hyperedges in $\cH$ is at most $f(r)$ times the number of blue edges, i.e., at most $f(r)(y-x).$

Moreover, we claim that the number of green hyperedges in $\cH$ is at most  $\binom{r}{2}-2$ times the number of green edges of $G$ -- indeed, by Claim \ref{clmbad}, any (special) red edge of $G$ that was recolored green, was originally contained in at most $\binom{r}{2}-2$ red hyperedges (which were all recolored to become green hyperedges). Thus a green edge of $G$ is in at most $\binom{r}{2}-2$ green hyperedges of $\cH$ and every green hyperedge contains a green edge. Therefore, the number of green hyperedges in $\cH$ is at most $\left(\binom{r}{2}-2\right)(y-x).$

Therefore the total number of hyperedges in $\cH$ is at most 

$$\frac{2cxn^{i-1}}{r}+ \left(f(r)+\binom{r}{2}-2\right)(y-x)\le\max\left\{\frac{2cn^{i-1}}{r},g(r)\right\}(x+y-x),$$

\vspace{2mm}

\noindent
and we are done with (a) and (b) by the assumption on $c$. 

For (c) we use the other upper bound on the number of $K_r$'s from Claim \ref{based_on_x}, namely: $$cx(r-1) \left(\frac{2ex(n,F)}{n}\right)^{i-1}.$$ Observe that it goes to infinity as $n$ grows, since $i>1$.
The same calculation gives that for large enough $n$ the number of hyperedges is at most $$\max\left\{c(r-1) \left(\frac{2ex(n,F)}{n}\right)^{i-1},g(r)\right\}(x+y-x)=c(r-1) \left(\frac{2ex(n,F)}{n}\right)^{i-1}y$$ $$\le c(r-1)ex(n,F)^{i} \left(\frac{2}{n}\right)^{i-1}.$$

\qed

\subsection{Proof of Corollary \ref{runifbergek2t}}
First we prove (a):

\vspace{2mm}

Let $G$ be a bipartite $K_{2,t}$-free graph with $\frac{n}{r}$ vertices in both color classes and containing $$\sqrt{t-1} \left(\frac{n}{r}\right)^{3/2} (1+o(1))$$ edges. The existence of such a graph is guaranteed by Theorem \ref{biparitieturanK2tgraph}. Let $$A = \{a_1, a_2, \ldots, a_{\frac{n}{r}} \}, \textrm{ and } B = \{b_1, b_2, \ldots, b_{\frac{n}{r}} \}$$ be the color classes of $G$.

Let us replace each vertex $a_i \in A$ with a set $A_i$ of $\lf \frac{r}{2} \rf$ copies of $a_i$, and each vertex $b_i \in B$ with a set $B_i$ of $\lc \frac{r}{2} \rc$ copies of $b_i$ to get an $r$-uniform hypergraph $\mathcal{H}$. Let $$A_{new} := \cup_i A_i, \textrm{ and}$$ $$ B_{new} := \cup_i B_i.$$ It is easy to see that the number of hyperedges in $\mathcal{H}$ is equal the number of edges in $G$, as required. It remains to show that $\mathcal{H}$ is Berge-$K_{2,t}$-free.
	
	Suppose for a contradiction that $\mathcal{H}$ contains a Berge-$K_{2,t}$. Then there is a bijective map from the hyperedges of the Berge-$K_{2,t}$ to the edges of the graph $K_{2,t}$ such that each edge is contained in the hyperedge that was mapped to it. Let $\{p,q\}$ and $T$ be the color classes of $K_{2,t}$.
	If $\{p,q\} \subset A_{new}$ and $T \subset B_{new}$ (or vice versa), then $p$ and $q$ cannot be in the same $A_i$ as each $A_i$ and each vertex of $B_{new}$ are contained in at most one hyperedge of $\mathcal{H}$, however the hyperedges of the Berge-$K_{2,t}$ containing the edges $pr, qr$ for some $r \in T$ must be different, a contradiction. Therefore, $p$ and $q$ belong to distinct $A_i$ and  similarly, the vertices of  $T$ must belong to distinct  $B_i$, but this implies that $G$ contains a $K_{2,t}$, a contradiction. So there are two vertices $x \in \{p,q\}$ and $y \in T$ such that $x, y \in A_{new}$ or $x, y \in B_{new}$.
    
    Suppose first that $x, y \in B_{new}$. There must be a hyperedge in $\mathcal{H}$ containing both $x$ and $y$.  However, there is no  hyperedge in $\mathcal{H}$ containing a vertex of $B_i$ and a vertex of $B_j$ with  $i \not =  j$, so $x$ and $y$ are both contained in  some $B_i$.  Every vertex of $\{p,q\} \cup T$ must be contained in a hyperedge with $x$ or $y$, thus each vertex of $\{p,q\} \cup T$ must be in $B_i$ or $A_{new}$. As the size of $B_i$ is $$\lc \frac{r}{2} \rc < \abs{\{p,q\} \cup T} = t+2$$ by assumption, there must be at least one vertex $z \in \{p,q\} \cup T$ in $A_{new}$. There is exactly one hyperedge of $\mathcal{H}$ that contains $z$ and any other vertex of $B_i \cup A_{new}$. However, the degree of $z$ in the Berge-$K_{2,t}$ is at least $2$, a contradiction.
    
    If $x, y \in A_{new}$ then we can again get a contradiction by the same reasoning as above. Therefore, $\mathcal{H}$ is Berge-$K_{2,t}$-free. 

\vspace{5mm}

\noindent
Now we prove (b):

\vspace{2mm}

\noindent 



\noindent
In a $K_{1,t}$-free graph, since the degree of any vertex is at most $t-1$, there are at most $$\binom{t-1}{r-2}$$ cliques of size $r-1$ containing any vertex. Thus we get the following.
$$ex(n, K_{r-1}, K_{1,t}) \le \frac{n}{r-1}\binom{t-1}{r-2} = \frac{n}{t} \binom{t}{r-1}.$$

\noindent
Therefore in Theorem \ref{main}, we can choose $i = 1$, $F' = K_{1,t}$ and $c = \frac{1}{t}\binom{t}{r-1}$. To apply Theorem \ref{main} part (a) or (b), we need to compare $$c = \frac{1}{t}\binom{t}{r-1} \ \textrm{ and } \ \frac{rg(r)}{2}.$$ If the first one is larger, then we get $$ex_r(n,K_{2,t}) \le \frac{2}{r \cdot t}\binom{t}{r-1} \cdot ex(n, K_{2,t}),$$ and by Theorem \ref{K2tgraph} we are done.  If the second one is larger, we get $$ex_r(n,K_{2,t}) \le g(r) \cdot ex(n, K_{2,t}),$$
and we are again done by Theorem \ref{K2tgraph}.\qed


\section{Proofs of the results about $r$-uniform linear Berge-$K_{2,t}$-free hypergraphs}
\label{linear}
\vspace{5mm}

\subsection{Proof of Theorem \ref{linearberge}}

Let $\cH$ be an $r$-uniform linear hypergraph containing no Berge-$K_{2,t}$. 

\vspace{1mm}

First let us fix $v \in V(\cH)$. Let the first neighborhood and second neighborhood of $v$ in $\cH$ be defined as $$N^{\cH}_1(v) := \{x \in V(\cH) \setminus \{v \}  \mid \exists h \in E(\cH)\text{ such that } v,x \in h \}, \textrm{ and}$$ $$N^{\cH}_2(v) := \{x \in V(\cH) \setminus (N^{\cH}_1(v) \cup \{v\}) \mid \exists h \in E(\cH) \text{ such that } x \in h \text{ and } h \cap N^{\cH}_1(v) \not = \emptyset\},$$ respectively.

\begin{clm}
	\label{small_degreeG1}
For any $u \in N^{\cH}_1(v)$,  the number of hyperedges $h \in E(\cH)$ containing $u$ such that $$\abs{h \cap N^{\cH}_1(v)} \ge 2$$ is at most $(r-1)(t-1)+1 \ (\le (r-1)t)$. 
\end{clm}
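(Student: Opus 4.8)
The plan is to argue by contradiction: if $u\in N^{\cH}_1(v)$ were contained in a family $\cA$ of more than $(r-1)(t-1)+1$ hyperedges $h$ with $|h\cap N^{\cH}_1(v)|\ge 2$, I would build a Berge-$K_{2,t}$ whose $2$-element colour class is $\{u,v\}$ and whose $t$-element colour class consists of $t$ common Berge-neighbours of $u$ and $v$, contradicting the Berge-$K_{2,t}$-freeness of $\cH$.

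The set-up would use two structural facts coming from linearity (that is, from Berge-$C_2$-freeness). First, the hyperedges through $v$ pairwise meet only in $v$ and each carries exactly $r-1$ of its vertices into $N^{\cH}_1(v)$, so they partition $N^{\cH}_1(v)$ into blocks of size $r-1$. Second, since $u\in N^{\cH}_1(v)$ there is a unique hyperedge $e_0$ containing both $u$ and $v$, and any other hyperedge through $u$ meets $e_0$ only in $u$. Then I would discard $e_0$ from $\cA$ (leaving at least $(r-1)(t-1)+1$ hyperedges), and for each remaining $h$ pick a vertex $w_h\in(h\cap N^{\cH}_1(v))\setminus\{u\}$, which is nonempty by the defining property of $\cA$; linearity through $u$ makes the $w_h$ pairwise distinct and forces each of them to avoid $e_0$. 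Since these $\ge(r-1)(t-1)+1$ distinct vertices lie in blocks of size $r-1$, they must occupy at least $t$ of the blocks, so I can select $w_1,\dots,w_t$ lying in $t$ distinct blocks and coming from $t$ distinct hyperedges $h_1,\dots,h_t\in\cA$; let $f_i$ be the unique hyperedge through $v$ that spans the block of $w_i$.

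The remaining task is to verify that $h_1,\dots,h_t,f_1,\dots,f_t$ are $2t$ distinct hyperedges witnessing a Berge-$K_{2,t}$ on the pairwise distinct vertices $u,v,w_1,\dots,w_t$: each $h_i$ contains $\{u,w_i\}$, each $f_i$ contains $\{v,w_i\}$, the $h_i$ are distinct, the $f_i$ are distinct because their blocks are, and no $h_i$ can equal any $f_j$ because $f_j$ contains $v$ while $h_i$, being different from $e_0$, does not. The step that needs the most care — and the reason the claimed bound is $(r-1)(t-1)+1$ rather than $(r-1)(t-1)$ — is ruling out a single hyperedge being used simultaneously on the $u$-side and the $v$-side, which is exactly why $e_0$ is removed at the start; the final inequality $(r-1)(t-1)+1\le(r-1)t$ for $r\ge 2$ is then immediate.
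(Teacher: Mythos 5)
Your proof is correct and follows essentially the same route as the paper's: discard the (unique) hyperedge through $v$, pick a second vertex of $N^{\cH}_1(v)$ from each remaining hyperedge through $u$, and pigeonhole these $(r-1)(t-1)+1$ distinct vertices over the hyperedges through $v$ to extract $t$ of them in distinct such hyperedges, yielding a Berge-$K_{2,t}$ on $\{u,v\}$ and those $t$ vertices. You simply spell out the distinctness checks (via linearity) that the paper leaves as ``easy to see.''
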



\begin{proof}
Suppose for a contradiction that there is a vertex $u \in N^{\cH}_1(v)$ which is contained in $(r-1)(t-1)+2$ hyperedges $h$ such that $\abs{h \cap N^{\cH}_1(v)} \ge 2$. At most one of them contains $v$ because $\cH$ is linear. 

From each of the $(r-1)(t-1)+1$ other hyperedges $h_i$ ($1 \le i \le (r-1)(t-1)+1$), that do not contain $v$, we select exactly one pair $uy_i \subset h_i$ arbitrarily. These pairs are distinct since $H$ is linear. Then (by pigeonhole principle) it is easy to see that there exist $t$ distinct vertices $$p_1, p_2, \ldots, p_t \in \{y_1, y_2, \ldots, y_{(r-1)(t-1)+1}\}$$ and $t$ distinct hyperedges $f_1, f_2, \ldots, f_t$ containing $v$ such that  $p_i \in f_i$. The  $t$ hyperedges containing  the pairs $up_i$ and the $t$ hyperedges $f_i$ ($1 \le i \le t$), form a  Berge-$K_{2,t}$ in $\cH$, a contradiction. 
\end{proof}

For each $u \in N^{\cH}_1(v)$, let $$E_u := \{h \in E(\cH)  \mid h \cap N^{\cH}_1(v) = \{u\} \}, \textrm{ and}$$ $$V_u := \{w \in N^{\cH}_2(v) \mid \exists h \in E_u \text{ with } w \in h \}.$$ Notice that by Claim \ref{small_degreeG1} we have $$\abs{E_u} \ge d(u)-(r-1)t \ \textrm{ and } \ \abs{V_u} = (r-1) \abs{E_u}$$ (except for $r=2$, in which case $|E_u| \ge d(u)-t+1$ and $|V_u|=|E_u|-1$, because the edge $vu \in E_u$. However, inequality \eqref{Vu} will still hold).

\vspace{2mm}

\noindent
Therefore,
\begin{equation}
\label{Vu}
\abs{V_u} \ge (r-1)d(u)-(r-1)^2 t.
\end{equation}

Let the hyperedges incident to $v$ be  $e^v_1, e^v_2, \ldots, e^v_{d(v)}$, where 
$$e^v_i =:  \{v,u_{1,i},u_{2,i}, \ldots, u_{r-1,i}\}$$ for $1 \le i \le d(v)$,
and let us define the sets  $$V_i :=  \cup_{j=1}^{r-1} V_{u_{j,i}}$$ for each $ 1 \le i \le d(v)$.

\vspace{2mm}

\begin{clm}
\label{union_set}
For each $1 \le i \le d(v)$, we have $$\abs{V_i} \ge \sum_{j =1}^{r-1}\abs{V_{u_{j,i}}} - \binom{r-1}{2}(2rt).$$
\end{clm}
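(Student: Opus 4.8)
The plan is to bound the size of the union $V_i = \bigcup_{j=1}^{r-1} V_{u_{j,i}}$ from below by inclusion–exclusion truncated at the first level: $|V_i| \ge \sum_{j=1}^{r-1} |V_{u_{j,i}}| - \sum_{1 \le j < k \le r-1} |V_{u_{j,i}} \cap V_{u_{k,i}}|$. Since there are $\binom{r-1}{2}$ pairs $\{j,k\}$, it suffices to show that for any two distinct vertices $u = u_{j,i}$ and $u' = u_{k,i}$ lying in the same hyperedge $e^v_i$ (hence both in $N^{\cH}_1(v)$), we have $|V_u \cap V_{u'}| \le 2rt$. This is the only real content; everything else is a one-line union bound.

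To bound $|V_u \cap V_{u'}|$, I would argue as follows. A vertex $w \in V_u \cap V_{u'}$ lies in $N^{\cH}_2(v)$ and, by definition of $V_u$ and $V_{u'}$, there is a hyperedge $h \in E_u$ (so $h \cap N^{\cH}_1(v) = \{u\}$) with $w \in h$, and a hyperedge $h' \in E_{u'}$ with $w \in h'$. Since $\cH$ is linear, any hyperedge of $\cH$ contains at most one vertex of $V_u \cap V_{u'}$ that it "witnesses" in a way forced by linearity with the edge $uu'$-carrying edge $e^v_i$ — more precisely, I expect to show that the hyperedges $h$ arising this way are constrained: each such $w$ gives a path $u, h, w, h', u'$, and together with $e^v_i$ (which contains both $u$ and $u'$) this is the skeleton of a Berge-$C_4$ through $u$ and $u'$; if too many such $w$ existed we could build a Berge-$K_{2,t}$ with $\{u,u'\}$ as the part of size two. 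Concretely: each $w \in V_u \cap V_{u'}$ is joined to $u$ by some $h_w \in E_u$ and to $u'$ by some $h'_w \in E_{u'}$. By linearity the edges $\{h_w\}$ are "almost distinct" for distinct $w$ (two distinct $w, w''$ cannot lie in a common $h \in E_u$ unless that is the unique hyperedge through $u$ and $u'$ — but $h \cap N_1^{\cH}(v) = \{u\}$ forbids $u' \in h$), so the $h_w$ are pairwise distinct, and likewise the $h'_w$; thus more than, say, $t$ of these $w$ would give $t$ internally disjoint Berge-paths of length $2$ from $u$ to $u'$, hence a Berge-$K_{2,t}$ on $\{u,u'\} \cup \{w_1,\dots,w_t\}$ (using the edges $h_{w_\ell}$ and $h'_{w_\ell}$), contradicting Berge-$K_{2,t}$-freeness. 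This already gives $|V_u \cap V_{u'}| \le t-1 < 2rt$; the slack factor $2r$ in the statement comfortably absorbs the various edge cases (the edge $e^v_i$ itself potentially being one of the $h_w$, the edge containing $v$, the $r=2$ degenerate case, and the fact that a single hyperedge $h$ can contribute up to $r-2$ vertices to $V_u \cap V_{u'}$ if it happens to meet things the wrong way).

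The main obstacle I anticipate is handling the possibility that a single hyperedge $h \in E_u$ contains several vertices of $V_u \cap V_{u'}$: then the "distinct $h_w$" argument needs care, because one hyperedge could carry up to $r-1$ of the $w$'s. The fix is to not insist the $h_w$ be distinct but instead count: if $|V_u \cap V_{u'}|$ is large, then even after grouping the $w$'s by which hyperedge of $E_u$ contains them, and by which hyperedge of $E_{u'}$ contains them, we still get many $w$'s lying in pairwise distinct hyperedge-pairs $(h_w, h'_w)$ — a loss of a factor at most $(r-1)$ on each side, so $|V_u \cap V_{u'}| \le (r-1)^2(t-1)$ in the worst naive count, which is still $\le 2rt$ for the relevant small $r$, or more carefully $\le 2rt$ in general once one observes that $h_w = h_{w''}$ forces $w, w''$ to be two of the $r-1$ non-$v$ vertices of that fixed edge, giving a bound like $(r-1) \cdot t \le 2rt$. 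So the argument is robust; I would just be careful to state the Berge-$K_{2,t}$ extraction cleanly, selecting one private pair $(h_w, h'_w)$ of hyperedges per chosen $w$ and appealing to linearity to guarantee the bijection required in the definition of Berge-$K_{2,t}$.
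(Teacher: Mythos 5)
Your overall strategy is the same as the paper's: truncated inclusion--exclusion reduces the claim to bounding $|V_{u_{p,i}} \cap V_{u_{q,i}}|$ by $2rt$, and that bound is obtained by extracting $t$ vertices $w_1,\dots,w_t$ of the intersection together with \emph{private} hyperedges $h_{w_\ell}\in E_{u_{p,i}}$ and $h'_{w_\ell}\in E_{u_{q,i}}$, which yields a Berge-$K_{2,t}$ on $\{u_{p,i},u_{q,i}\}\cup\{w_1,\dots,w_t\}$ (the two sides cannot share a hyperedge since members of $E_{u}$ meet $N^{\cH}_1(v)$ only in $u$). You also correctly identify the one real difficulty --- a single hyperedge of $E_u$ may contain several vertices of the intersection --- and correctly note that by linearity each $w$ lies in exactly one hyperedge of $E_u$ (the unique hyperedge through $u$ and $w$), so the collision structure on each side is a partition into groups of size at most $r-1$.

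The gap is in the quantitative accounting of your fix. The bound $(r-1)^2(t-1)$ obtained by deduplicating one side and then the other is \emph{not} at most $2rt$ in general: for $r\ge 4$ and $t$ large it exceeds $2rt$ (e.g.\ $r=5$, $t=100$ gives $16\cdot 99=1584$ versus $2rt=1000$). The ``more careful'' bound $(r-1)t$ is not justified as stated: the observation that $h_w=h_{w''}$ forces $w,w''$ into a common $(\le r-1)$-set only controls collisions on the $E_u$ side, and after choosing one representative per $E_u$-group you can still have $h'_w=h'_{w''}$, which destroys the injectivity required in the definition of Berge-$K_{2,t}$. The count that actually closes the argument (and is what the paper does, via an auxiliary graph and Brooks' theorem) is: define a conflict graph on $W=V_u\cap V_{u'}$ with $w\sim w''$ iff they share a hyperedge of $E_u$ or of $E_{u'}$; each side contributes a disjoint union of cliques of size at most $r-1$, so the maximum degree is at most $2(r-2)$, and a greedy selection (each chosen vertex discards itself and at most $2(r-2)$ others) produces at least $|W|/(2r-3)$ pairwise non-conflicting vertices. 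Hence $|W|\ge (2r-3)t$ already forces a Berge-$K_{2,t}$, and $(2r-3)t-1<2rt$ is the inequality that finishes the claim. With that replacement your proof goes through.
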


\begin{proof} 

Note that
\begin{equation}
\label{inclusion_exclusion}
\abs{V_i} =  \abs{\cup_{j=1}^{r-1} V_{u_{j,i}}} \ge \sum_{j =1}^{r-1}\abs{V_{u_{j,i}}} - \sum_{1 \le p < q \le r-1}{\abs{V_{u_{p,i}}  \cap V_{u_{q,i}}}}.
\end{equation}

\noindent
First we will show that 

$$\abs{V_{u_{p,i}}  \cap V_{u_{q,i}}} \le (2r-3)t-1.$$

\noindent
Suppose by contradiction that  $\abs{V_{u_{p,i}}  \cap V_{u_{q,i}}} \ge (2r-3)t$. We will construct an auxiliary graph $G$ whose vertex set is $V_{u_{p,i}}  \cap V_{u_{q,i}}$ and whose edge set is the union of the two sets $$\{xy \mid xy \subset h \text{ for some } h \in E_{u_{p,i}} \text{ and } x, y \in  V_{u_{p,i}}  \cap V_{u_{q,i}}\}, \ \text{and}$$ $$\{xy \mid xy \subset h \text{ for some } h \in E_{u_{q,i}} \text{ and } x, y \in  V_{u_{p,i}}  \cap V_{u_{q,i}}\}.$$ 

\noindent
It is easy to see that each set consists of pairwise vertex disjoint cliques of size at most $r-1$. Therefore, the maximum degree in $G$ is at most $2(r-2)$, so 
it has chromatic number at most $2r-3$, which implies that it has an independent set $I$ of size at least 

$$\frac{\abs{V(G)}}{2r-3} = \frac{\abs{V_{u_{p,i}}  \cap V_{u_{q,i}}}}{2r-3} \ge t.$$ 

\vspace{1mm}

\noindent
Let  $x_1, x_2, \ldots, x_{t} \in I$ be  distinct vertices. Then consider the set of hyperedges containing the pairs  $$u_{p,i}x_1, u_{p,i}x_2, \ldots, u_{p,i}x_{t}$$ 

\noindent
and the set of hyperedges containing the pairs 
$$u_{q,i}x_1, u_{q,i}x_2, \ldots, u_{q,i}x_{t}.$$ 

\noindent
The hyperedges in the first set are different from each other since $x_1, x_2, \ldots, x_{t}$ is an independent set. Similarly, the hyperedges in the second set are different from each other. A hyperedge of the first set and a hyperedge of the second set can not be same since that would imply that there is a hyperedge in $E_{u_{p,i}}$ containing the pair $u_{p,i}u_{q,i}$. However, this is impossible since such a hyperedge contains exactly one vertex from  $N^{\cH}_1(v)$ by definition. So all the hyperedges are different and they form a Berge-$K_{2,t}$, a contradiction. 
Therefore, we have $$\abs{V_{u_{p,i}}  \cap V_{u_{q,i}}} \le (2r-3)t-1 < 2rt.$$ Using this upper bound in \eqref{inclusion_exclusion}, we  get

$$
\abs{V_i} \ge \sum_{j =1}^{r-1}\abs{V_{u_{j,i}}} - \sum_{1 \le p < q \le r-1}{\abs{V_{u_{p,i}}  \cap V_{u_{q,i}}}} \ge \sum_{j =1}^{r-1}\abs{V_{u_{j,i}}} - \binom{r-1}{2}(2rt),
$$
completing the proof of the claim.
\end{proof}

\begin{clm}
\label{sumofUnionsets} We have
$$\sum_{i=1}^{d(v)} \abs{V_i} \le (t-1)n.$$
\end{clm}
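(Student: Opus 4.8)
The key insight is that the sets $V_i \subseteq N^{\cH}_2(v)$ have bounded overlap. Specifically, I expect to show that each vertex $w \in N^{\cH}_2(v)$ lies in at most $t-1$ of the sets $V_i$. Given that, summing gives $\sum_{i=1}^{d(v)} |V_i| \le (t-1)|N^{\cH}_2(v)| \le (t-1)n$, which is exactly the claim.

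So the heart of the matter is: \textbf{why does $w$ belong to at most $t-1$ of the $V_i$?} Suppose $w$ lies in $V_{i_1}, V_{i_2}, \ldots, V_{i_t}$ for $t$ distinct indices. For each $i_s$, membership $w \in V_{i_s}$ means $w \in V_{u_{j_s, i_s}}$ for some $j_s$, i.e.\ there is a hyperedge $g_s \in E_{u_{j_s,i_s}}$ with $w \in g_s$ and $g_s \cap N^{\cH}_1(v) = \{u_{j_s,i_s}\}$. I would try to assemble a Berge-$K_{2,t}$ with center pair $\{v, w\}$: the $t$ hyperedges $e^v_{i_1}, \ldots, e^v_{i_t}$ incident to $v$ (these are distinct since the $i_s$ are distinct, and each contains the corresponding $u_{j_s,i_s}$), and the $t$ hyperedges $g_1, \ldots, g_t$ each containing $w$ together with $u_{j_s,i_s}$. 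The "other side" of the $K_{2,t}$ would be the $t$ vertices $u_{j_1,i_1}, \ldots, u_{j_t,i_t}$. For this to be a genuine Berge-$K_{2,t}$ I need: (i) the $u_{j_s,i_s}$ are $t$ distinct vertices; (ii) the $2t$ hyperedges $\{e^v_{i_s}\}$ and $\{g_s\}$ are all distinct; (iii) each $u_{j_s,i_s}$-to-$v$ edge sits in $e^v_{i_s}$ and each $u_{j_s,i_s}$-to-$w$ edge sits in $g_s$. Point (iii) is immediate. For (ii), the $e^v_{i_s}$ are pairwise distinct; the $g_s$ contain $w \notin N^{\cH}_1(v)$ while $e^v_{i_s}$ contains $v$, and $w \neq v$, but I must be careful — I should verify no $g_s$ equals any $e^v_{i_{s'}}$ (it cannot, since $g_s$ meets $N^{\cH}_1(v)$ in exactly one vertex whereas $e^v_{i_{s'}}$ contains $v \notin N^{\cH}_1(v)$ and $r-1 \ge 1$ vertices of $N^{\cH}_1(v)$; for $r \ge 3$ that's $\ge 2$, and actually the real point is $e^v_{i_{s'}} \ni v$ while $g_s \not\ni v$); and that the $g_s$ are pairwise distinct.

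\textbf{The main obstacle: distinctness.} The subtle point — and I expect this to be where care is needed — is that the indices $j_s$ need not be distinct even when the $i_s$ are, so the vertices $u_{j_s,i_s}$ could coincide (a single vertex $u$ could be $u_{j,i_s}$ for several $s$, if the same $u$ appears in several of the hyperedges $e^v_{i_s}$ — but wait, linearity of $\cH$ forbids that, since $u$ and $v$ both lying in $e^v_{i_s}$ and $e^v_{i_{s'}}$ with $i_s \neq i_{s'}$ would violate linearity). So linearity is exactly what I need: each $u \in N^{\cH}_1(v)$ lies in exactly one hyperedge through $v$, hence the map $s \mapsto u_{j_s,i_s}$ is injective, giving (i). Similarly, the $g_s$ are pairwise distinct: if $g_s = g_{s'}$ then this common hyperedge meets $N^{\cH}_1(v)$ in the single vertex $u_{j_s,i_s} = u_{j_{s'},i_{s'}}$, contradicting injectivity when $i_s \neq i_{s'}$. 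Thus all $2t$ hyperedges are distinct and we obtain a Berge-$K_{2,t}$, contradiction. Hence $w$ lies in at most $t-1$ of the $V_i$, and summing over $w \in N^{\cH}_2(v)$ completes the proof. I should double-check the degenerate small cases ($r=2$, or $t=2$) separately but expect no trouble there.
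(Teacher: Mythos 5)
Your proof is correct and follows essentially the same route as the paper: show each vertex lies in at most $t-1$ of the sets $V_i$ by assembling, from the hyperedges $e^v_{i_s}$ and the witnessing hyperedges in the $E_{u_{j_s,i_s}}$, a Berge-$K_{2,t}$ centered at $\{v,w\}$. Your distinctness checks (injectivity of $s\mapsto u_{j_s,i_s}$ via linearity, and distinctness of the $2t$ hyperedges via the single-intersection property with $N^{\cH}_1(v)$ and membership of $v$) are exactly the points the paper uses, some of which it leaves implicit.
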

\begin{proof}
It suffices to show that a vertex  $x \in V(\cH)$ belongs to at most  $t-1$ of the sets $V_i$ for any $1 \le i \le d(v)$. Suppose for a contradiction that there is a vertex $x$ that is contained in $t$ sets  $V_{i_1}, V_{i_2}, \ldots, V_{i_t}$ for some distinct $i_1, i_2, \ldots, i_t \in \{1,2,\ldots,d(v)\}$. For notational simplicity, we may assume that $i_1 = 1$, $i_2 = 2$, . . . , and $i_t = t$. This means that there are $t$ hyperedges $h_1, h_2, \ldots, h_t$ containing the pairs $xz_1, xz_2, \ldots, xz_t$, respectively, where  $$z_j \in e^v_j \setminus \{v\} = \{u_{1,j},u_{2,j}, \ldots, u_{r-1,j}\}$$ for $1 \le j \le t$. 
The hyperedges $h_1, h_2, \ldots, h_t$ are distinct since they contain exactly one vertex from $N^{\cH}_1(v)$. Moreover, the $t$ hyperedges $e^v_j$ for  $1 \le j \le t$ are distinct from  $h_1, h_2, \ldots, h_t$ as a hyperedge in the former set contains $v$ but not $x$ and a hyperedge in the latter set contains $x$ but not $v$.  Therefore, these $2t$ hyperedges form a Berge-$K_{2,t}$, a contradiction.
\end{proof}

\begin{lemma}
\label{upperboundlinear}
For any $v \in V(\cH)$, we have $$\sum_{u \in N^{\cH}_1(v)}  (r-1)d(u) \le (t-1)n + (r-1)(2r^2-4r+1)t d(v).$$
\end{lemma}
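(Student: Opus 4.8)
The plan is to assemble Lemma~\ref{upperboundlinear} directly from the three claims already established, which bound the pieces that appear when one peels apart the neighbourhood of $v$. The starting point is inequality~\eqref{Vu}, which gives for each $u \in N^{\cH}_1(v)$ that $\abs{V_u} \ge (r-1)d(u) - (r-1)^2 t$; equivalently $(r-1)d(u) \le \abs{V_u} + (r-1)^2 t$. Summing this over all $u \in N^{\cH}_1(v)$ and splitting the sum over the hyperedges $e^v_1,\dots,e^v_{d(v)}$ incident to $v$ (each of which contributes its $r-1$ vertices $u_{1,i},\dots,u_{r-1,i}$ to $N^{\cH}_1(v)$, and these are all distinct since $\cH$ is linear), I would write
\begin{equation*}
\sum_{u \in N^{\cH}_1(v)} (r-1)d(u) \;=\; \sum_{i=1}^{d(v)} \sum_{j=1}^{r-1} (r-1) d(u_{j,i}) \;\le\; \sum_{i=1}^{d(v)} \sum_{j=1}^{r-1} \Bigl( \abs{V_{u_{j,i}}} + (r-1)^2 t \Bigr).
\end{equation*}

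The next step is to replace the inner sum $\sum_{j=1}^{r-1}\abs{V_{u_{j,i}}}$ by something controlling $\abs{V_i}$: by Claim~\ref{union_set}, $\sum_{j=1}^{r-1}\abs{V_{u_{j,i}}} \le \abs{V_i} + \binom{r-1}{2}(2rt)$. Plugging this in, the right-hand side becomes
\begin{equation*}
\sum_{i=1}^{d(v)} \abs{V_i} \;+\; d(v)\binom{r-1}{2}(2rt) \;+\; d(v)(r-1)(r-1)^2 t.
\end{equation*}
Then Claim~\ref{sumofUnionsets} bounds the first summand by $(t-1)n$, so altogether
\begin{equation*}
\sum_{u \in N^{\cH}_1(v)} (r-1)d(u) \;\le\; (t-1)n \;+\; \Bigl( (r-1)(r-2) r + (r-1)^3 \Bigr) t \, d(v).
\end{equation*}

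Finally I would check that the coefficient of $t\,d(v)$ equals $(r-1)(2r^2-4r+1)$, which is the stated bound: indeed $(r-1)(r-2)r + (r-1)^3 = (r-1)\bigl[ r(r-2) + (r-1)^2 \bigr] = (r-1)\bigl[ r^2 - 2r + r^2 - 2r + 1\bigr] = (r-1)(2r^2 - 4r + 1)$. One should also note the minor exceptional case $r=2$ flagged after \eqref{Vu}, where $\abs{V_u}=\abs{E_u}-1$ but \eqref{Vu} still holds, so the argument goes through verbatim. I do not expect a genuine obstacle here: the lemma is purely a bookkeeping consolidation of Claims~\ref{union_set} and~\ref{sumofUnionsets} together with \eqref{Vu}, and the only thing requiring care is making sure the error terms are combined with the right multiplicities (one factor of $d(v)$ from the number of hyperedges at $v$, and within each such hyperedge a factor $\binom{r-1}{2}$ for the pairwise intersections and a factor $(r-1)$ for the $(r-1)^2 t$ terms), which the algebraic identity above confirms.
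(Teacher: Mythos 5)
Your proposal is correct and follows essentially the same route as the paper: sum \eqref{Vu} over $N^{\cH}_1(v)$ (using linearity to identify $N^{\cH}_1(v)$ with the $(r-1)d(v)$ vertices $u_{j,i}$), then apply Claim~\ref{union_set} edge-by-edge and Claim~\ref{sumofUnionsets} to the sum of the $\abs{V_i}$, and verify the coefficient identity $(r-1)(r-2)r+(r-1)^3=(r-1)(2r^2-4r+1)$. The bookkeeping and error-term multiplicities match the paper's argument exactly.
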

\begin{proof}
By \eqref{Vu}, 
\begin{equation}\label{1}
\begin{split}
\sum_{u \in N^{\cH}_1(v)}  (r-1)d(u) \le \sum_{u \in N^{\cH}_1(v)} (\abs{V_u} + (r-1)^2 t) = \\
(r-1)^2 t \cdot (r-1)d(v)+\sum_{u \in N^{\cH}_1(v)}\abs{V_u}.
\end{split}
\end{equation}
Moreover, by Claim \ref{union_set}, $$\sum_{u \in N^{\cH}_1(v)}\abs{V_u} =  \sum_{i=1}^{d(v)}\sum_{j =1}^{r-1}\abs{V_{u_{j,i}}} \le \sum_{i=1}^{d(v)}(\abs{V_i} + \binom{r-1}{2}(2rt)),$$ and so using Claim \ref{sumofUnionsets} 
we have 
\begin{equation}
\label{2}
\sum_{u \in N^{\cH}_1(v)}\abs{V_u} \le (t-1)n + \binom{r-1}{2}(2rt)d(v).
\end{equation}
Combining \eqref{1} and \eqref{2}, we get  $$\sum_{u \in N^{\cH}_1(v)}  (r-1)d(u) \le (t-1)n +  \left(\binom{r-1}{2}(2rt) + (r-1)^2 t \cdot (r-1)\right)d(v).$$
Simplifying, we get

$$\sum_{u \in N^{\cH}_1(v)}  (r-1)d(u) \le (t-1)n + (r-1)(2r^2-4r+1)t d(v),$$ as desired.
\end{proof}

\noindent
On the one hand, if $d$ denotes the average degree in $\cH$, by Lemma \ref{upperboundlinear} we have $$\sum_{v \in V(H)} \sum_{u \in N^{\cH}_1(v)} (r-1)d(u) \le  (t-1)n^2 + (r-1)(2r^2-4r+1)tnd.$$ 
On the other hand, 
$$\sum_{v \in V(\cH)} \sum_{u \in N^{\cH}_1(v)} (r-1)d(u) = \sum_{u \in V(\cH)} (r-1)d(u) \cdot (r-1)d(u) =$$ $$\sum_{u \in V(\cH)} (r-1)^2d(u)^2 \ge (r-1)^2nd^2.$$ Here the first equality follows by taking an arbitrary vertex $u$ and counting how many times it appears in the first neighborhood of a vertex $v$, while the last inequality follows from the Cauchy-Schwarz inequality.  
So combining, we get  $$(r-1)^2nd^2 \le (t-1)n^2 + (r-1)(2r^2-4r+1)tnd.$$  That is, $$(r-1)^2d^2 \le (t-1)n + (r-1)(2r^2-4r+1)td.$$ Rearranging, we get $$((r-1)d-c_1(r,t))^2  \le (t-1)n + c_2(r,t),$$ where $$c_1(r,t) := \frac{(2r^2-4r+1)t}{2} \ \textrm{ and } \ c_2(r,t) := (c_1(r,t))^2.$$
This  gives that $$d  \le \frac{\sqrt{n(t-1)}}{r-1} \left(1+ O\left(\frac{1}{n}\right)\right) + c_3(r,t)$$ for some constant $c_3(r,t)$ depending only on $r$ and $t$.
So the number of hyperedges in $\cH$ is $$\frac{nd}{r} \le \frac{n^{3/2}\sqrt{t-1}}{r(r-1)} + O(n),$$ completing the proof of our  theorem.
\qed

\subsection{Proofs of Theorem \ref{linearlower} and \ref{linearlower2}}

In order to prove both theorems, we take the $K_{2,t}$-free graph $G$ constructed by F\"uredi \cite{F1996} (which is used to prove the lower bound in Theorem \ref{K2tgraph}), and replace its triangles by hyperedges as usual. However, the resulting hypergraph is far from linear, so our main idea is to delete some hyperedges in it to get a linear hypergraph. The graph $G$ contains many triangles and this number is calculated by Alon and Shikhelman to prove their lower bound in Theorem \ref{ask2t}. In our proofs of both theorems (Theorem \ref{linearlower} and \ref{linearlower2}) we do not need many specific properties of $G$. In the proof of Theorem \ref{linearlower} we use that it is $K_{2,t}$-free and contains $$(1+o(1))\frac{1}{6}(t-1)^{3/2}n^{3/2}$$ triangles. In the proof of Theorem \ref{linearlower2} we also use that it contains  $$(1+o(1))\frac{1}{2}(t-1)^{1/2}n^{3/2}$$ edges and all but $o(n^{3/2})$ edges are contained in $t-1$ triangles, while the remaining edges are contained in $t-2$ triangles. One can easily check these well-known properties of F\"uredi's construction \cite{F1996}, so we omit the proofs of these properties.

\vspace{4mm}

To conclude the proof of Theorem \ref{linearlower}, we construct an auxiliary graph $G'$. Its vertices are the triangles of $G$, and two vertices of $G'$ are connected by an edge if the corresponding triangles in $G$ share an edge. Obviously, we want to find a large independent set in $G'$. A theorem of Fajtlowicz  states the following.

\begin{thm}[\cite{F1978}]
\label{Fajtlow}

Any graph $F $ contains an independent set of size at least $$\frac{2|V(F)|}{\Delta(F)+\omega(F)+1},$$ where $\Delta(F)$ and $\omega(F)$ denotes the maximal degree and the size of the maximal clique of $F$, respectively.

\end{thm}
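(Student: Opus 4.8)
The plan is to prove the lower bound $\alpha(F)\ge \frac{2|V(F)|}{\Delta(F)+\omega(F)+1}$ on the independence number $\alpha(F)$ (the maximum size of an independent set) by strong induction on $n:=|V(F)|$. Throughout I write $\Delta:=\Delta(F)$, $\omega:=\omega(F)$, and $\delta:=\delta(F)$ for the minimum degree, and I record the key monotonicity fact that both $\Delta$ and $\omega$ can only decrease when vertices are deleted: any induced subgraph $F'$ has $\Delta(F')\le\Delta$ and $\omega(F')\le\omega$. This monotonicity is exactly what lets me invoke the inductive hypothesis on a smaller graph while keeping the same denominator $\Delta+\omega+1$, since shrinking the denominator only strengthens the bound on $F'$. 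The base case $n\le 1$ is immediate, and the complete graph $K_n$ (where $\Delta=n-1$, $\omega=n$, $\alpha=1$) already attains equality, which orients the extremal analysis.

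The argument splits on $\delta$. The \emph{sparse case} $\delta\le\frac{\Delta+\omega-1}{2}$ is routine: pick a vertex $v$ of degree $\delta$ and set $F':=F-N[v]$, a graph on $n-\delta-1$ vertices. Any maximum independent set of $F'$ together with $v$ is independent in $F$, so the inductive hypothesis and monotonicity give
$$\alpha(F)\ge 1+\alpha(F')\ge 1+\frac{2(n-\delta-1)}{\Delta+\omega+1}=\frac{2n}{\Delta+\omega+1}+\frac{\Delta+\omega+1-2(\delta+1)}{\Delta+\omega+1},$$
and the last fraction is nonnegative precisely because $\delta\le\frac{\Delta+\omega-1}{2}$. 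Thus this case closes cleanly, and it is the clique number $\omega$ entering through the threshold $\frac{\Delta+\omega-1}{2}$ (rather than $\frac{\Delta}{2}$) that will be responsible for the factor-$2$ improvement over the elementary domination bound $\alpha(F)\ge\frac{n}{\Delta+1}$.

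The main obstacle is the \emph{dense case} $\delta>\frac{\Delta+\omega-1}{2}$, where deleting one closed neighbourhood $N[v]$ discards too many vertices to be paid for by a single independent vertex, so the one-step recursion no longer closes. Here one must genuinely use that $F$ has small cliques: the neighbourhood $F[N(v)]$ has clique number at most $\omega-1$ and maximum degree at most $\Delta-1$, hence already contains a sizeable independent set, and I expect the resolution to come either from (i) deleting a whole independent set $J$ at once with $|N[J]|\le\frac{\Delta+\omega+1}{2}|J|$, or (ii) an amortized analysis of the minimum-degree greedy deletion process, in which over-budget deletions in a dense region are compensated by the degree reductions they force later. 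The delicate point — and the crux of Fajtlowicz's theorem — is that no purely local one-shot move suffices: on $C_5$ (a tight case, with $\delta=2>\frac{\Delta+\omega-1}{2}=\tfrac32$) every maximum clique is a single edge whose removal leaves $\alpha$ unchanged, so clique-removal gains nothing, and only the global choice of the maximum independent set $\{1,3\}$ realizes $|N[\{1,3\}]|=5=\frac{\Delta+\omega+1}{2}\cdot 2$. Making this global balancing rigorous for arbitrary dense graphs, i.e.\ bounding the average deletion cost by $\frac{\Delta+\omega+1}{2}$ using only the clique bound $\omega$, is the step I expect to require the most care.
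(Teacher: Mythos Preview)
The paper does not prove this theorem at all: it is quoted from Fajtlowicz's 1978 paper and used as a black box in the proof of Theorem~\ref{linearlower}. So there is no ``paper's own proof'' to compare against; you are attempting to supply a proof where the authors simply cite one.

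That said, your proposal is not a proof --- it is a proof of the easy half together with an honest admission that you do not know how to finish the hard half. The sparse case $\delta\le\frac{\Delta+\omega-1}{2}$ is handled correctly and completely. The dense case is the entire content of Fajtlowicz's theorem, and you explicitly leave it open: you list two possible strategies (delete a whole independent set $J$ with $|N[J]|\le\frac{\Delta+\omega+1}{2}|J|$, or amortize the greedy process) and say that making either rigorous ``is the step I expect to require the most care.'' Neither strategy is carried out, and your own $C_5$ example already shows that neither obvious local move (single closed neighbourhood, single maximum clique) suffices. Your diagnosis of the difficulty is accurate, but a diagnosis is not a proof.

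Concretely, strategy (i) fails as stated on the Petersen graph: there $\Delta=3$, $\omega=2$, $\delta=3>\tfrac{\Delta+\omega-1}{2}=2$, so you are in the dense case, yet for any two non-adjacent vertices $u,v$ one has $|N[u]\cup N[v]|=8>6=\Delta+\omega+1$ (girth~$5$ forces at most one common neighbour), so no pair achieves the required budget, and one must look at larger independent sets or a genuinely global argument. Strategy (ii) is plausible but you give no mechanism for the amortization. Until the dense case is actually resolved, this remains a sketch of the trivial direction plus a statement of the problem.
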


\noindent
Clearly we have $\Delta(G')\le 3(t-2) = 3t-6$ since each of the three edges of a triangle in $G$ is contained in at most $t-2$ other triangles. 
Now notice that if a set of triangles of $G$ pairwise intersect in two vertices then they either share a common edge or they are all contained in a $K_4$. In both cases, it is easy to see that $\omega(G') \le t+1$. Substituting these bounds in Theorem \ref{Fajtlow} and using that $|V(G')| = (1+o(1))\frac{1}{6}(t-1)^{3/2}n^{3/2}$ completes the proof of Theorem \ref{linearlower}.

\qed
\vspace{4mm}

To prove Theorem \ref{linearlower2}, we define an auxiliary hypergraph $\cH$ to be the $3$-uniform
hypergraph whose vertices are the edges of $G$, and three vertices $e_1$, $e_2$ and $e_3$ form a hyperedge in $\cH$ if there is a triangle in $G$ whose edges are $e_1$, $e_2$ and $e_3$. Then $\cH$ is linear since given any two edges of $G$, there is at most one triangle in $G$ that
contains both of them. Further, $\cH$ is 3-uniform and all but $o(n^{3/2})$ vertices in $\cH$ have degree $t-1$, while the rest have degree $t-2$. 
It is easy to see that we can construct another hypergraph $\cH'$ by adding a set $X$ of $o(n^{3/2})$ vertices to the vertex set of $\cH$, such that $\cH'$ is linear, 3-uniform and $(t-1)$-regular.



We will use the following special case of a theorem of Alon, Kim and Spencer \cite{AKS1997}.

\begin{thm}[\cite{AKS1997}]
\label{alonkimspencer}
Let $\cH'$ be a linear, 3-uniform, $(t-1)$-regular hypergraph on $N$ vertices. Then there exists a matching $M$ in $\cH'$ covering at least 

$$N-\frac{c_0N\ln^{3/2}(t-1)}{\sqrt{t-1}}$$ vertices, where $c_0$ is an absolute constant.

\end{thm}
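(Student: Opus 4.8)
The plan is to prove the stated matching bound by the semi-random method (the Rödl nibble), which is the standard route to near-perfect matchings in nearly regular, low-codegree hypergraphs. Write $D:=t-1$ for the common degree of $\cH''$. The matching $M$ will be assembled over many rounds: in each round I select a small random ``nibble'' of edges, retain those that do not collide with any other selected edge, add these to $M$, delete every covered vertex together with the edges through it, and pass to the induced sub-hypergraph on the surviving vertices. The crucial structural input is linearity: since any two edges meet in at most one vertex, the $D$ edges through a given vertex are otherwise disjoint, so all codegrees are bounded by $1$ and the random process behaves predictably.

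Quantitatively, fix a small bite-size $\epsilon>0$. Starting from a hypergraph that is approximately $d$-regular on $m$ surviving vertices, include each edge independently with probability $p=\epsilon/d$, and call a selected edge \emph{good} if none of its three vertices lies in another selected edge; add all good edges to $M$. Because each vertex lies in about $d$ edges, the expected number of selected edges through it is $\epsilon$, and linearity makes those edges essentially conflict-independent. A short inclusion--exclusion then shows that a vertex survives the round with probability $\approx e^{-\epsilon}$ and that the surviving degree of a surviving vertex is concentrated around $d\,e^{-2\epsilon}$ (an incident edge survives precisely when its other two endpoints survive). Thus each nibble multiplies the uncovered fraction by $\approx e^{-\epsilon}$ and the degree by $\approx e^{-2\epsilon}$, so after $R$ rounds the uncovered fraction is $\approx e^{-R\epsilon}$ while the degree is $\approx D\,e^{-2R\epsilon}$; the two are linked by $e^{-R\epsilon}\approx\sqrt{d/D}$.

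The heart of the argument --- and the step I expect to be the main obstacle --- is \emph{concentration}: I must show that with high probability the count of good edges and, crucially, \emph{every} surviving degree stay within a small relative error of the expectations above, so that approximate regularity is preserved from round to round. This is handled by the Pippenger--Spencer bounded-differences framework: one exposes the random edge-indicators one at a time and applies a martingale concentration inequality (Azuma, or the Kim--Vu polynomial inequality) to each vertex-degree and to the edge count, then takes a union bound over the surviving vertices. The subtle point, and the real crux of the $3$-uniform case, is to arrange the concentration so that the tolerable relative error per round and the stopping threshold depend only on $D$ rather than on $N$ --- this is what keeps the logarithm in the final bound a $\ln(t-1)$ and not a $\ln N$. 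Here the degree is a quadratic/cubic functional of the edge variables, its concentration is weaker than for higher uniformity, and the relative error accumulated over the $\Theta(\log D)$ rounds is exactly what inflates the bound by a power of $\ln D$.

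Finally, I run the nibble until the surviving degree first reaches a threshold $d^\ast$ of polylogarithmic order, chosen so that $\sqrt{d^\ast/D}$ matches the target while the concentration estimates remain valid; taking $d^\ast=\Theta(\ln^{3}D)$ gives an uncovered set of relative size $\approx\sqrt{d^\ast/D}=\ln^{3/2}(D)/\sqrt{D}$, that is, of size $O\!\bigl(N\,\ln^{3/2}(D)/\sqrt{D}\bigr)$. Taking $M$ to be the matching accumulated over all rounds therefore covers all but $O\!\bigl(N\,\ln^{3/2}(t-1)/\sqrt{t-1}\bigr)$ vertices, with the absolute constant $c_0$ absorbing all implied constants; a greedy maximal matching on the low-degree remnant can only improve this but is not needed. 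This yields the stated bound.
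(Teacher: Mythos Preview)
The paper does not prove this statement at all: it is quoted verbatim as a special case of a theorem of Alon, Kim and Spencer \cite{AKS1997} and then applied as a black box in the proof of Theorem~\ref{linearlower2}. So there is no ``paper's own proof'' to compare against.

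That said, your sketch is a faithful outline of the actual argument in \cite{AKS1997}: the semi-random nibble, with the linearity hypothesis ensuring codegrees are at most $1$, the per-round contraction factors $e^{-\epsilon}$ for the uncovered set and $e^{-2\epsilon}$ for the degree, and the stopping rule at a polylogarithmic residual degree that produces the $\ln^{3/2}(t-1)/\sqrt{t-1}$ leftover. Your identification of the concentration step as the crux, and of the $3$-uniform case as the one where the polylog loss is largest, is also correct. The only caveat is that what you have written is a proof \emph{plan} rather than a proof: the martingale/polynomial-concentration step and the tracking of errors across $\Theta(\log D)$ rounds are genuinely delicate, and your sketch does not carry them out. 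For the purposes of this paper none of that is needed --- a citation suffices --- but if you intend this as a standalone proof you would have to fill in those estimates.
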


Note that $\cH$ has  $$(1+o(1))\frac{1}{2}(t-1)^{1/2}n^{3/2}$$ vertices, thus the number of vertices in $\cH'$ is $$N= (1+o(1))\frac{1}{2}(t-1)^{1/2}n^{3/2}+o(n^{3/2}).$$ 

Applying Theorem \ref{alonkimspencer} we get a matching $M$ in $\cH'$. We delete at most $o(n^{3/2})$ hyperedges of $M$ that contain a vertex from $X$. This way we get a matching $M'$ in $\cH$ that covers all but 

$$\frac{c_0N\ln^{3/2}(t-1)}{\sqrt{t-1}}+o(n^{3/2})$$ vertices of $\cH$.  This implies, 

$$|M'| \ge \left( 1-\frac{c_0}{\sqrt{t-1}}\ln^{3/2}(t-1) \right) \frac{\sqrt{t-1}}{6}n^{3/2}+o(n^{3/2}),$$

\noindent
Finally, $ex_3(n, \{C_2, K_{2,t}\}) \ge |M'|$ -- indeed, by definition, $M'$ corresponds to a set of triangles in $G$ such that no two of them share an edge. So replacing them by hyperedges we get a $3$-uniform Berge-$K_{2,t}$-free linear hypergraph with $|M'|$ hyperedges, as desired. Note that the lower bound in Theorem \ref{linearlower2} does not have the additive term $o(n^{3/2})$ because we can choose $c$ in Theorem \ref{linearlower2} to be large enough (compared to $c_0$) so that the right hand side of the above inequality is at least the bound mentioned in our theorem.

\qed
\section{Remarks}

We finish this article with some questions and remarks concerning our results.

\vspace{2mm}

$\bullet$ In Corollary \ref{maincor} we provided an asymptotics for $ex_3(n,K_{2,t})$ for $t\ge 7$. It would be interesting to determine the asymptotics in the remaining cases. We conjecture the following.
\begin{conjecture}
For $t = 3,4,5,6$, we have $$ex_3(n,K_{2,t})=(1+o(1)) \frac{1}{6}(t-1)^{3/2}n^{3/2}.$$
\end{conjecture}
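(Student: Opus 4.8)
The lower bound $ex_3(n,K_{2,t}) \ge \frac{1}{6}(t-1)^{3/2}n^{3/2}(1+o(1))$ is already available from Theorem \ref{gp2} together with Theorem \ref{ask2t}, exactly as in Corollary \ref{maincor}, and it holds for every $t \ge 2$. So the entire content of the conjecture is the matching upper bound for $t \in \{3,4,5,6\}$. The reason Theorem \ref{main} does not deliver this is that for $r=3$ a pair of vertices may be ``red'', i.e.\ lie in at least two hyperedges, and the crude accounting for the red (and hence green) hyperedges in the proof of Theorem \ref{main} loses a multiplicative factor of essentially $2$; this factor is absorbed only once $(t-1)/2 \ge \binom{3}{2}-2$, i.e.\ $t\ge 7$. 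The plan is therefore to replace that crude step by a structural analysis of the sub-hypergraph carried by the red pairs.

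Concretely, I would work directly with the shadow graph $\partial\cH$ (the graph of all pairs contained in some hyperedge of $\cH$), using the trivial injection sending each hyperedge $\{a,b,c\}$ to the triangle $abc$, so that $|\cH| \le N(K_3,\partial\cH)$; the goal becomes $N(K_3,\partial\cH) \le \frac{1}{6}(t-1)^{3/2}n^{3/2}(1+o(1))$. Since a $K_{2,t}$-free graph on $n$ vertices has at most $\frac{\sqrt{t-1}}{2}n^{3/2}(1+o(1))$ edges (Theorem \ref{K2tgraph}) and codegree at most $t-1$, hence at most $\frac{t-1}{3}$ times as many triangles as edges (this is precisely the Alon--Shikhelman estimate of Theorem \ref{ask2t}), it would suffice to show that $\partial\cH$ is ``almost $K_{2,t}$-free'' in a weighted sense: one can delete a set $D$ of pairs so that $\partial\cH - D$ is $K_{2,t}$-free while $\sum_{e\in D}(\text{number of triangles of }\partial\cH\text{ through }e) = o(n^{3/2})$. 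Equivalently: the pairs of large codegree in $\partial\cH$ together support only $o(n^{3/2})$ triangles.

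The key lemma to establish is thus a bound on high-codegree pairs. A copy of $K_{2,t}$ in $\partial\cH$ with parts $\{p,q\}$ and $\{w_1,\dots,w_t\}$ fails to yield a Berge-$K_{2,t}$ only through ``collisions'': the $2t$ hyperedges chosen to realise the edges $pw_i,qw_i$ cannot be made pairwise distinct. Analysing this via Hall's theorem, just as in the proof of Theorem \ref{main}, or directly, one checks that a collision can arise only through hyperedges of the form $\{p,q,w_i\}$, $\{p,w_i,w_j\}$ or $\{q,w_i,w_j\}$; hence if the link graphs of $p$ and of $q$, restricted to $N(p)\cap N(q)$, have small maximum degree and $\{p,q\}$ itself lies in few hyperedges, then a sufficiently large independent set among the $w_i$ produces a genuine Berge-$K_{2,t}$. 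Feeding this back, every pair of codegree at least $Ct$ in $\partial\cH$ must either lie in $\ge C't$ hyperedges or be ``blocked'' by an incident vertex whose link is locally dense, and a global double count of the hyperedges responsible for these two phenomena should show that all such pairs together carry only $o(n^{3/2})$ triangles, closing the argument.

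The main obstacle I anticipate is exactly this last double count: the blocking configurations (the $\{p,w_i,w_j\}$-type hyperedges, and the pairs of large multiplicity) interact, and controlling them simultaneously over \emph{all} high-codegree pairs — rather than one pair at a time, which is all the proof of Theorem \ref{main} currently does for small $t$ — is the unresolved point. A successful proof will probably need either a smarter choice of which hyperedge represents each shadow pair, so as to keep $\partial\cH$ close to $K_{2,t}$-free from the start, or a stability-type argument showing that a Berge-$K_{2,t}$-free $3$-graph with close to the conjectured number of hyperedges must essentially be the triangle set of a near-extremal $K_{2,t}$-free graph.
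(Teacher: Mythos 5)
This statement is stated in the paper as an open conjecture: the authors prove the asymptotics only for $t\ge 7$ (Corollary \ref{maincor}) and explicitly leave $t=3,4,5,6$ unresolved, so there is no proof in the paper to compare against --- and your text does not close the gap either. Your lower bound is correct and is exactly the paper's (Theorem \ref{gp2} combined with Theorem \ref{ask2t}), and your diagnosis of why Theorem \ref{main} stops at $t=7$ is essentially right, up to a small algebraic slip: the threshold is $c=(t-1)/2\ge r\left(\binom{r}{2}-2\right)=3$, not $(t-1)/2\ge\binom{3}{2}-2$, though your conclusion $t\ge 7$ is the correct one. The entire content of the conjecture is the upper bound for $t\in\{3,4,5,6\}$, and the step that would deliver it --- your key lemma that one can delete a set $D$ of high-codegree pairs of the shadow graph $\partial\cH$ meeting every copy of $K_{2,t}$ while the triangles of $\partial\cH$ through $D$ number only $o(n^{3/2})$ --- is precisely the step you acknowledge you cannot carry out. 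That is not a technical loose end; it is the whole difficulty, and it is the same obstruction (pairs lying in many hyperedges) that prevents the paper's method from reaching small $t$. As written, this is a research plan with the decisive lemma missing, not a proof.

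Two further cautions on the plan itself. First, your stated intermediate goal, $N(K_3,\partial\cH)\le\frac{1}{6}(t-1)^{3/2}n^{3/2}(1+o(1))$, is a priori strictly stronger than the conjecture, because the shadow can contain triangles that are not hyperedges of $\cH$; what your deletion scheme actually requires is only that the triangles of $\partial\cH$ through $D$ be $o(n^{3/2})$ (the hyperedges avoiding $D$ are then handled by Theorem \ref{K2tgraph} and the codegree bound, as in Theorem \ref{ask2t}), but those triangles through $D$ still include such ``phantom'' triangles and nothing in your sketch controls them. Second, your collision analysis (hyperedges of the form $\{p,q,w_i\}$, $\{p,w_i,w_j\}$, $\{q,w_i,w_j\}$ being the only obstructions to extracting a Berge-$K_{2,t}$ from a $K_{2,t}$ in the shadow) is sound for a single pair $\{p,q\}$, but the conjecture needs the losses summed over \emph{all} high-codegree pairs simultaneously, and nothing you write excludes the possibility that a positive proportion of the hyperedges of $\cH$ sit on pairs of codegree $\ge Ct$. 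Until that global double count (or one of your alternative remedies: a cleverer representative assignment, or a stability argument) is actually executed, the conjecture remains open.
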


\vspace{2mm}

$\bullet$ In Theorem \ref{linearberge} and Theorem \ref{linearlower2} we showed that the asymptotics of $ex_3(n,\{C_2,K_{2,t}\})$ is close to being sharp for large enough $t$. However, it would be interesting to determine the asymptotics for all $t \ge 3$.

\vspace{2mm}

$\bullet$ In Theorem \ref{main}, we studied a class of $r$-uniform Berge-$F$-free hypergraphs. It would be interesting to extend these results to a larger class of hypergraphs. Similarly, it would be interesting to see if our  results in the linear case (in Section \ref{linear_case}) can be extended.

\vspace{2mm}

$\bullet$ Finally we note that there is a correspondence between Tur\'an-type questions for Berge hypergraphs and forbidden submatrix problems (for an updated survey of the latter topic see \cite{A2013}). For more information about this correspondence, see \cite{AS2016}, where they prove results about forbidding small hypergraphs in the Berge sense and they are mostly interested in the order of magnitude. Very recently, similar research was carried out in \cite{SS2017} and also see the references therein. We note that our results provide improvements of some special cases of Theorem 5.8. in \cite{SS2017}. 

\section*{Acknowledgement}

We thank the anonymous reviewers for their careful reading of our manuscript and their many insightful comments and suggestions improving the presentation of our article.

\ 

Research of Gerbner was supported by the J\'anos Bolyai Research Fellowship of the Hungarian Academy of Sciences and by the National Research, Development and Innovation Office -- NKFIH, grant K 116769.

\noindent
Research of Methuku was supported by the National Research, Development and Innovation Office -- NKFIH, grant K 116769.

\noindent
Research of Vizer was supported by the National Research, Development and Innovation Office -- NKFIH, grant SNN 116095.

\end{document}